\documentclass[10pt,english]{article}
\usepackage[utf8]{inputenc}
\usepackage{lmodern}

\usepackage[utf8]{inputenc}
\usepackage[a4paper]{geometry}
\geometry{verbose,tmargin=3cm,bmargin=3cm,lmargin=3cm,rmargin=3cm}
\setlength{\parskip}{\smallskipamount}
\setlength{\parindent}{0pt}
\usepackage{color}
\usepackage{xcolor}
\usepackage{mathrsfs}
\usepackage{mathtools}
\usepackage{bm}
\usepackage{amsmath}
\usepackage{amssymb}
\usepackage{amsthm}
\usepackage{hyperref}
\usepackage[english]{babel}
\usepackage{dsfont}
\usepackage{changepage}
\usepackage{kantlipsum}

\newcommand{\one}{\mathds{1}}

\def\d{\mathrm{d}}
\def\mod{\mathrm{Mod}}
\def\R{\mathbb{R}}
\def\N{\mathbb{N}}
\def\loc{\mathrm{loc}}

\hypersetup{colorlinks=true, linkcolor=blue, citecolor=blue}

\newtheorem{theorem}{Theorem}[section]

\newtheorem{lemma}[theorem]{Lemma}
\newtheorem{prop}[theorem]{Proposition}
\theoremstyle{definition}
\newtheorem{definition}[theorem]{Definition}

\numberwithin{equation}{section}

\def\Yint#1{\mathchoice
    {\YYint\displaystyle\textstyle{#1}}%
    {\YYint\textstyle\scriptstyle{#1}}%
    {\YYint\scriptstyle\scriptscriptstyle{#1}}%
    {\YYint\scriptscriptstyle\scriptscriptstyle{#1}}%
      \!\iint}
\def\YYint#1#2#3{{\setbox0=\hbox{$#1{#2#3}{\iint}$}
    \vcenter{\hbox{$#2#3$}}\kern-.51\wd0}}
\def\longdash{{-}\mkern-3.5mu{-}} 

\def\fiint{\Yint\longdash}

\DeclareMathOperator*{\esssup}{ess\,sup}
\DeclareMathOperator*{\essinf}{ess\,inf}
\DeclareMathOperator*{\essosc}{ess\,osc}
\DeclareMathOperator*{\supp}{supp}

\def\Xint#1{\mathchoice
	{\XXint\displaystyle\textstyle{#1}}%
	{\XXint\textstyle\scriptstyle{#1}}%
	{\XXint\scriptstyle\scriptscriptstyle{#1}}%
	{\XXint\scriptscriptstyle\scriptscriptstyle{#1}}%
	\!\int}
\def\XXint#1#2#3{{\setbox0=\hbox{$#1{#2#3}{\int}$ }
		\vcenter{\hbox{$#2#3$ }}\kern-.6\wd0}}

\def\dashint{\Xint-}

\makeatletter

\newcommand{\subjclass}[2][1991]{
  \let\@oldtitle\@title
  \gdef\@title{\@oldtitle\footnotetext{#1 \emph{Mathematics subject classification.} #2}}
}
\newcommand{\keywords}[1]{
  \let\@@oldtitle\@title
  \gdef\@title{\@@oldtitle\footnotetext{\emph{Key words and phrases.} #1.}}
}

\makeatother

\allowdisplaybreaks

\title{Variational solutions to the total variation flow on metric measure spaces}
\author{Vito Buffa\footnote{ORCID: 0000-0003-4175-4848. E-mail: bff.vti@gmail.com}, Juha Kinnunen\footnote{Aalto University, Department of Mathematics. E-mail: juha.k.kinnunen@aalto.fi}, Cintia Pacchiano Camacho\footnote{Aalto University, Department of Mathematics. E-mail: cintia.pacchiano@aalto.fi}}
\keywords{Parabolic variational problems, time mollifications, metric measure spaces, Sobolev spaces, parabolic Sobolev spaces}
\subjclass[2020]{Primary 30L99, 35K99. Secondary 35A15, 49J27, 49N60.}
\begin{document}

\maketitle

\begin{abstract}\noindent 
We discuss a purely variational approach to the total variation flow on metric measure spaces with a doubling measure and a Poincaré inequality.
We apply the concept of parabolic De Giorgi classes together with upper gradients, Newtonian spaces and functions of bounded variation to prove a necessary and sufficient condition for a variational solution to be continuous at a given point.  
\end{abstract}

\section{Introduction}
The total variation flow (TVF) is the partial differential equation
 \[
    \frac{\partial u}{\partial t} -\operatorname{div}\left(\frac{Du}{|Du|}\right)=0
    \quad\text{on}\quad \Omega_T=\Omega\times (0,T),
  \]
where $\Omega\subset\R^N$ is an open set and $T>0$.
There are several ways to define the concept of weak solution to the TVF.
One possibility is to apply the so-called Anzellotti pairing \cite{Anzellotti:1984}.
This approach has been applied in existence and uniqueness results for the total variation flow in \cite{AndreuBallesterCasellesMazon,AndreuCasellesDiazMazon,AndreuCasellesMazon,BellettiniCasellesNovaga}.
The variational inequality related to the TVF is
\[
     \int_0^T\|Du(t)\|(\Omega)\,\d t
     - \int_0^T\int_{\Omega}u(t)\frac{\partial\varphi}{\partial t}(t)\,\d x\,\d t
     \le\int_0^T\|D(u-\varphi)(t)\|(\Omega)\,\d t
   \]
 for every $\varphi\in C^\infty_0(\Omega_T)$, where the total variation $\|Du(t)\|(\Omega)$ is a Radon measure for almost every $t\in(0,T)$. 
A variational approach to existence and uniqueness questions has been
discussed by B\"ogelein, Duzaar and Marcellini \cite{BogeleinDuzaarMarcellini}, see also B\"ogelein, Duzaar and Scheven \cite{BogeleinDuzaarScheven}, and for the
corresponding obstacle problem in \cite{BoegelDuzSchev:2015}.
The natural function space for weak solutions to the TVF is bounded variation (BV).
Functions of bounded variation on metric measure spaces have been studied in \cite{AmbrosioDiMarino, mir}.
Instead of partial derivatives, this approach is based on the modulus of the gradient and using concepts such as minimal upper gradients and Newtonian spaces, 
see \cite{bjorn, hei, Heinonen1998, hkst, Koskela1998,Shanmugalingam}. This is also the main advantage of the variational approach.
A central motivation for developing such a theory has been the desire to unify the assumptions and methods employed in various specific spaces, such as Riemannian manifolds, Heisenberg groups, graphs, etc. 

The regularity theory of nonlinear parabolic problems in the metric space context has been developed and studied in \cite{Heran,Ivert_et_al,kmmp,MarolaMasson, Masson_et_al_2013,MassonParviainen,MassonSiljander}.
The comparison principle has been discussed in \cite{KinnunenMasson} and stability theory has been investigated in \cite{FujishimaHabermann, Fujishima_et_al,FujishimaHabermannKinnunenMasson}. 
Existence for parabolic problems on metric spaces has been discussed in \cite{collins}.
All of these results consider variational inequalities with $p$-growth for $p>1$. 
For the case $p=1$, which corresponds to the TVF, Buffa, Collins and Pacchiano \cite{BuffaCollinsPacchiano} showed existence of a parabolic minimizer using the concept of global variational solution. 
G\'orny and Maz\'on \cite{Gorny-Mazon:2021} studied the existence and uniqueness of weak solutions of the Neumann and Dirichlet problems to the TVF in metric measure spaces.
The main goal of the present paper is to extend the results of DiBenedetto, Gianazza and Klaus \cite{dbgk} to a metric measure space with a doubling measure and a Poincar\'e inequality.
The main result gives a necessary and sufficient condition for a variational solution to be continuous at a given point, see Theorem \ref{NecSuf}.
Our assumption on the time regularity of a variational solution is initially weaker than in \cite{dbgk} and thus our results may be interesting also in the Euclidean case.
As fas as we know, this is the first time when regularity questions are discussed for parabolic problems with linear growth on metric measure spaces.

The first step is to derive an energy estimate for variational solutions, in other words, to prove that variational solutions belong to a parabolic De Giorgi class, see Proposition \ref{belonging}.
The regularity results are based only on this energy estimate and on the assumptions made on the underlying metric measure space, but there is a technical difficulty present when establishing energy estimates for variational solutions.
It is not clear that the time regularity of a variational solution is a priori sufficient for placing it as the test function and performing the usual techniques used for obtaining an energy estimate. 
We resolve this issue by using a mollification technique. The idea of this technique is to prove the required energy estimate for mollified functions and finally to conclude the estimate at the limit. 
To establish the limiting estimate, we consider the upper gradient of a difference of functions, see Lemma \ref{vbthm}.
In the Euclidean case this poses no difficulties as we can use the linearity of the gradients, in the general metric setting the situation is not that simple, as taking an upper gradient is not a linear operation.

This paper is organized as follows. In Section \ref{Preliminaries} we recall basic definitions and describe the general setup of our study. 
Several results related to the function spaces and Sobolev-Poincar\'e inequalities may be of independent interest.
In Section \ref{TotalVariationFlow} we concentrate on the definition and properties of a variational solution to the TVF. 
Section \ref{ParabolicDeGiorgiClass} explores the relationship between variational solutions to the TVF and the parabolic De Giorgi classes. 
In Section \ref{DeGiorgiLemma} and \ref{TimeExpansion}, respectively, we prove that functions in a parabolic De Giorgi class are locally bounded and give a time expansion of positivity result. Finally, in Section \ref{CharacterizationContinuity} we present the characterization of continuity, i.e. we prove necessary and sufficient conditions for a variational solution to the TVF to be continuous at a given point. 
The last three sections are extensions of the corresponding results on Euclidean spaces by DiBenedetto, Gianazza and Klaus in \cite{dbgk} to metric measure spaces.

\section{Preliminaries}\label{Preliminaries}
\subsection{Newtonian spaces}

Let $(X,d, \mu)$ be a complete metric measure space endowed with a Borel measure $\mu$.
The measure $\mu$ is said to satisfy the doubling condition if there exists a constant $C_\mu\geq 1$,
called the doubling constant of $\mu$, such that
\begin{equation}\label{doubling}
0<\mu\left(B_{2r}(x)\right)\leq C_\mu\mu\left(B_{r}(x)\right)<\infty,
\end{equation}
for every $x\in X$ and $r>0$. 
Here $B_{r}(x)=\lbrace y\in X: d(x,y)<r\rbrace$ is an open ball centered at $x\in X$ with radius $r>0$.
We assume throughout that the measure $\mu$ is nontrivial in the sense that $0<\mu\left(B_{r}(x)\right)<\infty$ for every $x\in X$ and $r>0$.
A complete metric metric measure space with a doubling measure is proper, that is, closed and bounded subsets are compact, see \cite[Proposition 3.1]{bjorn}.
The doubling condition implies that for any $x\in X$, we have
\begin{equation}\label{desigualdadradios}
\frac{\mu(B_{R}(x))}{\mu(B_{r}(x))}\leq C\left(\frac{R}{r}\right)^{Q},
\end{equation}
for $0<r<R$ with $Q=\text{log}_2 \,C_\mu$ and $C=C_\mu^{-2}$, see \cite[Lemma 3.3]{bjorn}.
The exponent $Q=\text{log}_2 \,C_\mu$ is sometimes called the homogeneous dimension of $(X,d, \mu)$. 

A path $\gamma$ is a continuous mapping from a compact subinterval of $\mathbb R$ to $X$.
The $p$-modulus, with $1\le p<\infty$, of a path family $\Gamma$ on $X$ is
\[
\mod_p(\Gamma)
=\inf\int_X \rho^p\,\d\mu,
\]
where the infimum is taken over all nonnegative Borel functions $\rho$ with $\int_\gamma \rho\d s\ge1$ for all $\gamma\in\Gamma$, see \cite[Section 1.5]{bjorn}.
We recall the definition of upper gradient introduced and studied by \cite{Heinonen1998}, \cite{Koskela1998} and \cite{Shanmugalingam}.
General references for this theory are \cite{bjorn}, \cite{hei} and \cite{hkst}.

\begin{definition}
A nonnegative Borel function $g$ on $X$ is an upper gradient of a function $u:X\to[-\infty,\infty]$ if for all paths $\gamma$ in $X$, we have 
\begin{equation}\label{inequppergradient}
\vert u(x)-u(y)\vert\leq\int_{\gamma}g\, \d s,
\end{equation}
whenever both $u(x)$ and $u(y)$ are finite, and $\int_{\gamma}g\, \d s=\infty$ otherwise. Here $x$ and $y$ are the endpoints of $\gamma$.
Moreover, if a nonnegative $\mu$-measurable function $g$ satisfies \eqref{inequppergradient} for $p$-almost every path, 
that is with the exception of a path family of zero $p$-modulus, then $g$ is called a $p$-weak upper gradient of $u$.
\end{definition}

For $1\leq p<\infty$ and an open set $\Omega\subset X$, let
\[
\Vert u\Vert_{N^{1,p}(\Omega)}=\Vert u\Vert_{L^{p}(\Omega)}+\inf\Vert g\Vert_{L^{p}(\Omega)},
\]
where the infimum is taken over all upper gradients $g$ of $u$.
Consider the collection of all functions $u\in L^p(\Omega)$ with an upper gradient $g\in L^p(\Omega)$ and let
\begin{equation*}
\widetilde{N}^{1,p}(\Omega)
=\lbrace u:\Vert u\Vert_{N^{1,p}(\Omega)}<\infty\rbrace.
\end{equation*}
The Newtonian space is defined by
\begin{equation*}
N^{1,p}(\Omega)=\lbrace u:\Vert u\Vert_{N^{1,p}(\Omega)}<\infty\rbrace/\sim,
\end{equation*}
where $u\sim v$ if and only if $\Vert u-v\Vert_{N^{1,p}(\Omega)}=0$.

The corresponding local Newtonian space is defined by $u\in N^{1,p}_{\loc}(\Omega)$ if
$u\in N^{1,p}(\Omega')$ for all $\Omega'\Subset \Omega$, see \cite[Proposition 2.29]{bjorn}.
Here $\Omega'\Subset \Omega$ means that $\overline{\Omega'}$ is a compact subset of $\Omega$.
If $u$ has an upper gradient $g\in L^p(\Omega)$, there exists a unique minimal $p$-weak upper gradient $g_u\in L^p(\Omega)$ with
$g_u\le g$ $\mu$-almost everywhere for all $p$-weak upper gradients $g\in L^p(\Omega)$ of $u$, see  \cite[Theorem 2.5]{bjorn}.
Moreover, the minimal $p$-weak upper gradient is unique up to sets of measure zero.
For $u\in N^{1,p}(\Omega)$ we have
\[
\Vert u\Vert_{N^{1,p}(\Omega)}=\Vert u\Vert_{L^{p}(\Omega)}+\Vert g_u\Vert_{L^{p}(\Omega)},
\]
where $g_u$ is the minimal $p$-weak upper gradient of $u$.
The main advantage is that $p$-weak upper gradients behave better under $L^p$-convergence than upper gradients, see \cite[Proposition 2.2]{bjorn}.
However, the difference is relatively small, since every $p$-weak upper gradient can be approximated be a sequence of upper gradients in $L^p$, see \cite[Lemma 1.46]{bjorn}.
This implies that that the $N^{1,p}$-norm above remains the same if the infimum is taken over upper gradients instead of $p$-weak upper gradients.

We collect some calculus rules for upper gradients on metric measure spaces. 
Let $u,v\in N^{1,p}_{\mathrm{loc}}(\Omega)$ and let $g_{u},\,g_{v}\in L^{p}_{\mathrm{loc}}(\Omega)$ be the $p$-weak upper gradients of $u$ and $v$, respectively. 
Then $g_{u}+g_{v}$ and $\vert u\vert g_{v}+\vert v\vert g_{u}$ are $p$-weak upper gradients for $u+v$ and $uv$, respectively, see \cite[Theorem 2.15]{bjorn}.
Let  $\eta$ be Lipschitz continuous on $\Omega$ with $0\leq\eta\leq 1$ and consider $w=u+\eta(u-v)=(1-\eta)u+\eta v$. Then
$(1-\eta)g_{u}+\eta g_{v}+\vert v-u\vert g_{\eta}$
is a $p$-weak upper gradient of $w$, see \cite[Theorem 2.18]{bjorn}.
Moreover, $g_{u}=g_{v}$, $\mu$-almost everywhere on the set $\lbrace x\in X: u(x)=v(x)\rbrace$.
In particular, if $c\in\mathbb{R}$ is a constant, then $g_{u}=0$ $\mu-$almost everywhere on the set $\lbrace x\in X: u(x)=c\rbrace$, see \cite[Corollary 2.21]{bjorn}.

A metric measure space $(X,d,\mu)$ supports a weak Poincar\'e inequality, if there exist a constant $C_{P}$ and a dilation factor $\tau\geq 1$ 
such that for every ball $B_{\rho}(x_{0})$ in $X$, for every $u\in L^{1}_{\loc}(X)$ and every upper gradient $g$ of $u$, we have
\begin{equation}\label{qp-Poincare}
\dashint_{B_{\rho}(x_{0})}\vert u-u_{B_{\rho}(x_{0})}\vert\,\d\mu
\leq C_{P}\rho\dashint_{B_{\tau\rho}(x_{0})}g\,\d\mu,
\end{equation}
where the integral average is denoted by
\begin{equation*}
u_{B_{\rho}(x_{0})}=\dashint_{B_{\rho}(x_{0})} u\, \d\mu
=\frac{1}{\mu(B_{\rho}(x_{0}))}\int_{B_\rho(x_0)}u\,\d\mu.
\end{equation*}
A space supporting a Poincar\'e inequality is connected, see \cite[Proposition 4.2]{bjorn}.
Throughout the work, we assume that the measure $\mu$ is doubling and that the metric measure space $(X, d,\mu)$ supports a weak  Poincar\'e inequality. 
The weak Poincar\'e inequality and the doubling condition imply the Sobolev-Poincar\'e inequality
\begin{equation}\label{Sobolev-Poincare}
 \left(\dashint_{B_{\rho}(x_{0})} |u-u_{B_{\rho}(x_{0})}|^{\frac{Q}{Q-1}} \, \d \mu\right)^{\frac{Q-1}{Q}} 
 \le C\rho\dashint_{B_{2\tau\rho}(x_{0})}g\,\d\mu,
\end{equation}
for every $u\in L^1_{\loc}(X)$ and every $1$-weak upper gradient $g$ of $u$ and for every ball $B_{\rho}(x_{0})$ in $X$ with $C=C(C_\mu,C_P)$
and $Q$ as in \eqref{desigualdadradios}, see \cite[Theorem 4.21]{bjorn}. 

Next we discuss parabolic Newtonian spaces.

\begin{definition}\label{def.pnewtonian}
Let $\Omega\subset X$ be an open set, $0<T<\infty$ and $1\le p<\infty$. 
The parabolic Newtonian space $L^{p}(0, T;N^{1,p}(\Omega))$ consists of  strongly measurable functions 
$u:(0,T)\to N^{1,p}(\Omega)$ with the norm
\begin{equation*}
\Vert u\Vert_{L^{p}(0, T;N^{1,p}(\Omega))}
=\left(\int_{0}^{T}\Vert u(t)\Vert_{N^{1,p}(\Omega)}^{p}\,\d t\right)^{\frac1p}<\infty.
\end{equation*}
The integration over $(0,T)$ is taken with respect to the one-dimensional Lebesgue measure $\mathcal{L}^{1}$.
We say that $u\in L^{p}_{\loc}(0, T;N^{1,p}_{\loc}(\Omega))$ if for every 
$\Omega'\times (t_{1},t_{2})\Subset \Omega_{T}$ we have $u\in  L^{p}(t_{1}, t_{2};N^{1,p}(\Omega'))$.
Moreover, we denote $u\in L^{p}_{\textrm{c}}(0, T;N^{1,p}(\Omega))$ if for some $0<t_{1}<t_{2}<T$ we have $u(t)=0$ outside $[t_{1},t_{2}]$.
\end{definition}

The strong measurability of $u:(0,T)\to N^{1,p}(\Omega)$ and the assumption $u\in L^{p}(0, T;N^{1,p}(\Omega))$,
imply that there exists a sequence $(u_k)_{k\in\mathbb N}$ of simple functions
$u_k:(0,T)\to N^{1,p}(\Omega)$, 
\begin{equation}\label{simple-np}
u_k(t)=\sum_{i=1}^{n_k}\one_{E_i^{(k)}}(t)\cdot u_i^{(k)},
\end{equation}
where $\{E_i^{(k)}\}_{i=1}^{n_k}$ is a $\mathcal{L}^{1}$-measurable pairwise disjoint partition of $(0,T)$ and $v_i^{(k)}\in N^{1,p}(\Omega)$, $i=1,\dots,n_k$,
such that $u_k\to u$ in $L^{p}(0, T;N^{1,p}(\Omega))$ as $k\to\infty$.
In particular, we have $u_k(t)\to u(t)$ in $N^{1,p}(\Omega)$ for $\mathcal{L}^{1}$-almost every $t\in(0,T)$.
In other words, up to relabeling, we have
\begin{equation}\label{ae-lim-np}
 u(t)=\sum_{k=1}^\infty\one_{E_k}(t)\cdot u_k,
\end{equation}
with the sets $E_k$ and and simple functions $u_k$ as in \eqref{simple-np}. 

Next we consider upper gradients.
Since $u_k(t)\to u(t)$ in $N^{1,p}(\Omega)$ for $\mathcal{L}^{1}$-almost every $t\in(0,T)$ as $k\to\infty$, we have
$u(t)\in N^{1,p}(\Omega)$ for $\mathcal{L}^{1}$-almost every $t\in(0,T)$.
Consider the minimal $p$-weak upper gradient $g_{u(t)}\in L^p(\Omega)$ of $u(t)$ for $\mathcal{L}^{1}$-almost every $t\in(0,T)$.
The parabolic $p$-weak upper gradient of $u\in L^{p}(0, T;N^{1,p}(\Omega))$ is defined to be $g_u=g_{u(t)}$ for $\mathcal{L}^{1}$-almost every $t\in(0,T)$.

We note that the function $g_{u}$ is strongly measurable. 
For $\mathcal{L}^{1}$-almost every $t\in(0,T)$, the function $u(t)$ is the limit of strongly measurable functions $u_k(t)$ defined in \eqref{simple-np}.
By \eqref{ae-lim-np} and the locality of minimal $p$-weak upper gradients, we have
\begin{equation}\label{strong-up-grad}
 g_{u(t)}
 = g_{\sum_{k=1}^\infty \one_{E_k}(t)\cdot u_k} 
 = \sum_{k=1}^\infty \one_{E_k}(t)\cdot g_{u_k},
\end{equation}
for $\mathcal{L}^{1}$-almost every $t\in(0,T)$. 
Strong measurability follows, since $u_k\in N^{1,p}(\Omega)$ and $g_{u_k}\in L^p(\Omega)$ for every $k\in\N$. 
In other words, $g_{u(t)}$ can be approximated in $L^p(\Omega)$ by the functions $g_{u_k}(t)\in L^p(\Omega)$,
\begin{equation*}
 g_{u_k(t)} = g_{\sum_{i=1}^{n_k} \one_{E_i^{(k)}}(t)\cdot u_i^{(k)}} = \sum_{i=1}^{n_k} \one_{E_i^{(k)}}(t)\cdot g_{u_i^{(k)}},
\end{equation*}
which we obtain from \eqref{simple-np} by arguing as in \eqref{strong-up-grad}.
Since $u_k\to u$ in $L^{p}(0, T;N^{1,p}(\Omega))$,
we have $u_k\to u$  in $L^{p}(0, T;L^{p}(\Omega))$ and
$g_{u_k}\to g_u$ in $L^{p}(0, T;L^{p}(\Omega))$ as $k\to\infty$.

The product measure in the space $X\times (0,T)$, $T>0$, is denoted by $\mu\otimes\mathcal{L}^{1}$.
For $T>0$, we denote the space-time cylinder over an open subset $\Omega\subset X$ as $\Omega_{T}=\Omega\times(0, T)$.
For $u\in L^{p}(0, T;L^{p}(\Omega))$, there exists a $(\mu\otimes\mathcal{L}^{1})$-measurable representative
$u:\Omega_T\to[-\infty,\infty]$ such that $u(t)=u(\cdot,t)$ for $\mathcal{L}^{1}$- almost every $t\in(0,T)$ and
\[
\int_0^T\int_\Omega|u(x,t)|^p\,\d\mu\,\d t
=\int_0^T\Vert u(t)\Vert_{L^p(\Omega)}^p\,\d t.
\]
See \cite[Theorem 23.21] {Kuttler} and \cite[Section 2.1.1]{ruz}).
Similarly, for $g_u\in L^{p}(0, T;L^{p}(\Omega))$, there exists a $(\mu\otimes\mathcal{L}^{1})$-measurable representative
$g_u:\Omega_T\to[-\infty,\infty]$ such that $g_u(t)=g_u(\cdot,t)$ for $\mathcal{L}^{1}$-almost every $t\in(0,T)$.

With these observations we may consider the parabolic Newtonian space $L^{p}(0, T;N^{1,p}(\Omega))$ to be the space of functions 
$u\in L^p(\Omega_T)$, with $u=u(x,t)$, such that  for $\mathcal{L}^{1}$-almost every $t\in(0,T)$ the function $u(\cdot, t)$ belongs to $N^{1,p}(\Omega)$ and 
there exists $g_u\in L^p(\Omega_T)$ such that for $\mathcal{L}^{1}$-almost every $t\in(0,T)$ the function $g_u(\cdot, t)$ is a minimal $p$-weak upper gradient of $u(\cdot, t)$
with
\begin{equation*}
\iint_{\Omega_T}\left(|u(x,t)|^p+|g_u(x,t)|^p\right)\,\d\mu\,\d t
<\infty.
\end{equation*}

Let $u\in L^p_{\mathrm{loc}}(0,T; N^{1,p}_{\mathrm{loc}}(\Omega))$,  $1\le p<\infty$, and consider the time mollification
\[
u_\varepsilon(t)
=\int_{-\varepsilon}^{\varepsilon}\eta_\varepsilon(s)u(t-s)\,\d s,
\]
where $\eta_\varepsilon(s)=\frac1s\eta(\frac s\varepsilon)$, $\varepsilon>0$, is a standard mollifier.
The following approximation result was proved in more generality in \cite{Buffa}. 
We include a slightly modified version together with its full proof for reader's convenience.
We say that $u_\varepsilon\to u$ in $L^{p}_{\loc}(0, T;N^{1,p}_{\loc}(\Omega))$, if 
$\|u_\varepsilon-u\|_{L^{p}(t_{1}, t_{2};N^{1,p}(\Omega'))}\to 0$ as $\varepsilon\to0$
for every $\Omega'\times (t_{1},t_{2})\Subset \Omega_{T}$, where $\Omega'\Subset\Omega$ and $0<t_1<t_2<T$.

\begin{lemma}\label{vbthm}
Let $\Omega\subset X$ be an open set and assume that $u\in L^p_{\mathrm{loc}}(0,T; N^{1,p}_{\mathrm{loc}}(\Omega))$, $1\le p<\infty$.
Then $u_\varepsilon\to u$ in  $L^p_{\mathrm{loc}}(0,T; N^{1,p}_{\mathrm{loc}}(\Omega))$ as $\varepsilon\to0$.
In particular, we have $g_{u_\varepsilon-u}\to0$ in $L^p_{\mathrm{loc}}(\Omega_T)$ as $\varepsilon\to0$.
Moreover, as $s\to0$, we have $g_{u(\cdot,t-s)-u(\cdot,t)}\to0$ in $L^p_{\mathrm{loc}}(\Omega_T)$ uniformly in $t$.
\end{lemma}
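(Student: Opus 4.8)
The plan is to reduce the statement to two ingredients and then read off all three assertions from them. Fix a subcylinder $\Omega'\times(t_1,t_2)\Subset\Omega_T$ and choose $0<t_1'<t_1<t_2<t_2'<T$, so that $u\in L^p(t_1',t_2';N^{1,p}(\Omega'))$; restrict throughout to $0<\varepsilon,|s|<\delta_0:=\min\{t_1-t_1',\,t_2'-t_2\}$, so that for $\tau\in(t_1,t_2)$ the average $u_\varepsilon(\tau)$ involves only values of $u$ on $(t_1',t_2')$. The first ingredient is the continuity of time translations in the Newtonian-valued $L^p$ sense,
\[
\omega(\delta):=\sup_{0<|s|\le\delta}\int_{t_1}^{t_2}\|u(\cdot,\tau-s)-u(\cdot,\tau)\|_{N^{1,p}(\Omega')}^p\,\d\tau\longrightarrow 0\quad\text{as }\delta\to0 .
\]
The second is the pointwise bound, valid for a.e.\ $\tau\in(t_1,t_2)$,
\[
g_{u_\varepsilon(\cdot,\tau)-u(\cdot,\tau)}\ \le\ \int_{-\varepsilon}^{\varepsilon}\eta_\varepsilon(s)\,g_{u(\cdot,\tau-s)-u(\cdot,\tau)}\,\d s\qquad\mu\text{-a.e.\ on }\Omega' .
\]

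Granting these two facts, applying the Minkowski integral inequality, then Jensen's inequality (recall $\eta_\varepsilon\ge0$, $\int_{-\varepsilon}^{\varepsilon}\eta_\varepsilon=1$) and Tonelli's theorem to the second display, together with the trivial bound $\|g_v\|_{L^p(\Omega')}\le\|v\|_{N^{1,p}(\Omega')}$, yields $\int_{t_1}^{t_2}\|g_{u_\varepsilon(\cdot,\tau)-u(\cdot,\tau)}\|_{L^p(\Omega')}^p\,\d\tau\le\omega(\varepsilon)$, which tends to $0$ as $\varepsilon\to0$ by the first ingredient; this is the convergence $g_{u_\varepsilon-u}\to0$ in $L^p(\Omega'\times(t_1,t_2))$, hence in $L^p_\loc(\Omega_T)$. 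Since moreover $u_\varepsilon\to u$ in $L^p(\Omega'\times(t_1,t_2))$ by the classical theory of time mollification (Fubini and $L^p$-continuity of translations in the time variable), and $\|u_\varepsilon(\tau)-u(\tau)\|_{N^{1,p}(\Omega')}=\|u_\varepsilon(\tau)-u(\tau)\|_{L^p(\Omega')}+\|g_{u_\varepsilon(\cdot,\tau)-u(\cdot,\tau)}\|_{L^p(\Omega')}$, we obtain $u_\varepsilon\to u$ in $L^p_\loc(0,T;N^{1,p}_\loc(\Omega))$. Finally, since $\|g_{u(\cdot,\tau-s)-u(\cdot,\tau)}\|_{L^p(\Omega')}\le\|u(\cdot,\tau-s)-u(\cdot,\tau)\|_{N^{1,p}(\Omega')}$, the first ingredient says precisely that $\int_{t_1}^{t_2}\|g_{u(\cdot,\tau-s)-u(\cdot,\tau)}\|_{L^p(\Omega')}^p\,\d\tau\le\omega(|s|)\to0$ as $s\to0$, with a modulus depending only on $|s|$ and the cylinder --- the sense in which this convergence is uniform in $t$.

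To prove the first ingredient, use that $u\in L^p(t_1',t_2';N^{1,p}(\Omega'))$ is the strong limit of simple functions $u_k=\sum_{i=1}^{n_k}\one_{E_i^{(k)}}u_i^{(k)}$ with $u_i^{(k)}\in N^{1,p}(\Omega')$ and $\{E_i^{(k)}\}_i$ a measurable partition, as recalled above. Split $\|u(\cdot,\tau-s)-u(\cdot,\tau)\|_{N^{1,p}(\Omega')}$ by inserting $u_k(\cdot,\tau-s)$ and $u_k(\cdot,\tau)$, raise to the $p$-th power and integrate over $\tau\in(t_1,t_2)$. After the change of variables $\tau\mapsto\tau-s$ (legitimate since $|s|<\delta_0$), the two outer terms are each bounded by $\int_{t_1'}^{t_2'}\|u(\cdot,\sigma)-u_k(\cdot,\sigma)\|_{N^{1,p}(\Omega')}^p\,\d\sigma$, which tends to $0$ as $k\to\infty$. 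For the middle term, $u_k(\cdot,\tau-s)-u_k(\cdot,\tau)$ vanishes unless $\tau$ lies in $\bigcup_i\big((E_i^{(k)}+s)\triangle E_i^{(k)}\big)$, a set of measure at most $\sum_i\big|(E_i^{(k)}+s)\triangle E_i^{(k)}\big|$, which tends to $0$ as $s\to0$ by $L^1$-continuity of translations applied to each $\one_{E_i^{(k)}}$, while on that set $\|u_k(\cdot,\tau-s)-u_k(\cdot,\tau)\|_{N^{1,p}(\Omega')}\le 2\max_i\|u_i^{(k)}\|_{N^{1,p}(\Omega')}$. Choosing first $k$ large and then $\delta$ small gives the claim.

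The main obstacle is the second ingredient, precisely because taking upper gradients is not a linear operation. I would first prove its simple-function analogue: if $v=\sum_i\one_{E_i}v_i\in L^p(t_1',t_2';N^{1,p}(\Omega'))$ is simple and $\tau\in E_l$, then grouping the parameters $s\in(-\varepsilon,\varepsilon)$ according to which piece $E_j$ contains $\tau-s$ exhibits $v_\varepsilon(\cdot,\tau)-v(\cdot,\tau)=\sum_j b_j\,(v_j-v_l)$ as a finite \emph{convex} combination of the elements $v_j-v_l\in N^{1,p}(\Omega')$, with weights $b_j=\int_{\{s:\,\tau-s\in E_j\}}\eta_\varepsilon(s)\,\d s\ge0$ summing to $1$; iterating the calculus rules $g_{w_1+w_2}\le g_{w_1}+g_{w_2}$ and $g_{cw}=|c|g_w$ then gives $g_{v_\varepsilon(\cdot,\tau)-v(\cdot,\tau)}\le\sum_j b_j\,g_{v_j-v_l}=\int_{-\varepsilon}^{\varepsilon}\eta_\varepsilon(s)\,g_{v(\cdot,\tau-s)-v(\cdot,\tau)}\,\d s$. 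Applying this to the simple functions $u_k$ from the previous paragraph shows that $\rho_k(\cdot,\tau):=\int_{-\varepsilon}^{\varepsilon}\eta_\varepsilon(s)\,g_{u_k(\cdot,\tau-s)-u_k(\cdot,\tau)}\,\d s$ is a $p$-weak upper gradient of $(u_k)_\varepsilon(\cdot,\tau)-u_k(\cdot,\tau)$. Using the subadditivity estimate $|g_{u_k(\cdot,\tau-s)-u_k(\cdot,\tau)}-g_{u(\cdot,\tau-s)-u(\cdot,\tau)}|\le g_{(u_k-u)(\cdot,\tau-s)}+g_{(u_k-u)(\cdot,\tau)}$ together with $\int_{t_1'}^{t_2'}\|g_{(u_k-u)(\cdot,\sigma)}\|_{L^p(\Omega')}^p\,\d\sigma\le\int_{t_1'}^{t_2'}\|u_k(\cdot,\sigma)-u(\cdot,\sigma)\|_{N^{1,p}(\Omega')}^p\,\d\sigma\to0$, one checks --- passing to a subsequence and to a.e.\ $\tau$ --- that $(u_k)_\varepsilon(\cdot,\tau)-u_k(\cdot,\tau)\to u_\varepsilon(\cdot,\tau)-u(\cdot,\tau)$ and $\rho_k(\cdot,\tau)\to\int_{-\varepsilon}^{\varepsilon}\eta_\varepsilon(s)\,g_{u(\cdot,\tau-s)-u(\cdot,\tau)}\,\d s$, both in $L^p(\Omega')$. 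Since $p$-weak upper gradients are closed under $L^p$-convergence of this type, see \cite[Proposition 2.2]{bjorn}, the limiting function is a $p$-weak upper gradient of $u_\varepsilon(\cdot,\tau)-u(\cdot,\tau)$, and the second ingredient follows from minimality of $g_{u_\varepsilon(\cdot,\tau)-u(\cdot,\tau)}$. The measurability bookkeeping in $\tau$ for the approximants, together with the closure property, is exactly the content of the argument in \cite{Buffa} and is the place where the metric setting is genuinely different from the Euclidean one.
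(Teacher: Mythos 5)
Your argument is correct, but it is organized quite differently from the paper's proof. The paper works directly with the representation \eqref{ae-lim-np} of $u$ as a countable sum $\sum_k\one_{E_k}(t)\cdot u_k$ with fixed $u_k\in N^{1,p}$, commutes the time mollification with this series, estimates $g_{u-u_\varepsilon}\le\sum_k|\one_{E_k}-(\one_{E_k})_\varepsilon|\,g_{u_k}$ pointwise $\mathcal L^1$-a.e.\ in $t$, and concludes by dominated convergence; the translation statement is handled by a Minkowski-type estimate on the same series (involving the crude majorant $\sum_k g_{u_k}$). You never use such a series identity for $u$ itself: you only use that $u$ is an $L^p(N^{1,p})$-limit of finite simple functions, and you split the proof into (a) continuity of translations in the Bochner norm, with a quantitative modulus $\omega(\delta)$, and (b) the Jensen-type commutator inequality $g_{u_\varepsilon(\cdot,\tau)-u(\cdot,\tau)}\le\int_{-\varepsilon}^{\varepsilon}\eta_\varepsilon(s)\,g_{u(\cdot,\tau-s)-u(\cdot,\tau)}\,\d s$, proved for simple functions by writing $v_\varepsilon(\tau)-v(\tau)$ as a convex combination of differences $v_j-v_l$ and transferred to $u$ via the closure of $p$-weak upper gradients under $L^p$-convergence together with minimality. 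What your route buys: it sidesteps the paper's identity \eqref{ae-lim-np} (which in general is only an approximation statement, since a strongly measurable $u$ need not be countably valued) and the possibly non-summable majorant $\sum_k g_{u_k}$ appearing in the paper's final Minkowski estimate, and it yields all three assertions from the single modulus $\omega$, which also makes the meaning of ``uniformly in $t$'' transparent. What the paper's route buys: once the series representation is granted, the argument is shorter and gives a.e.\ pointwise convergence of the upper gradients directly. Two small points you should make explicit if you write this up: the minimal $p$-weak upper gradients you estimate are taken with respect to $\Omega'$, so the statement for $g_{u_\varepsilon-u}$ on $\Omega$ uses the locality of minimal upper gradients (as the paper also does implicitly); and in ingredient (b) the subsequence along which $(u_k)_\varepsilon(\cdot,\tau)-u_k(\cdot,\tau)$ and $\rho_k(\cdot,\tau)$ converge for a.e.\ $\tau$ may depend on $\varepsilon$, which is harmless because the inequality is proved for each fixed $\varepsilon$ separately.
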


\begin{proof} Since
\[
u(t)=\sum_{k=1}^\infty \one_{E_k}(t)\cdot u_k,
\]
for $\mathcal{L}^1$-almost every $t\in(0,T)$, by the definition of the time mollification we have
\begin{align*}
u_\varepsilon(t)&=\int_{-\varepsilon}^\varepsilon \eta_\varepsilon(s)u(t-s)\,\d s
=\int_{-\varepsilon}^\varepsilon \eta_\varepsilon(s)\sum_{k=1}^\infty \one_{E_k}(t-s)\cdot u_k\,\d s \\
&=\sum_{k=1}^\infty \left(\int_{-\varepsilon}^\varepsilon \eta_\varepsilon(s)\one_{E_k}(t-s)\,\d s\right)\cdot u_k
 = \sum_{k=1}^\infty (\one_{E_k})_\varepsilon(t)\cdot u_k, 
\end{align*}
which implies
\[
u(t)-u_{\varepsilon}(t)
=\sum_{k=1}^\infty \left(\one_{E_k}(t)-(\one_{E_k})_\varepsilon(t)\right)\cdot u_k,
\]
for $\mathcal{L}^1$-almost every $t\in(0,T)$.
By a standard mollifier argument we conclude that $u_\varepsilon\to u$ in $L^p_{\mathrm{loc}}(\Omega_T)$ as $\varepsilon\to0$.

By properties of minimal $p$-weak upper gradients and standard mollifications, we obtain
\begin{align*}
g_{u-u_\varepsilon} 
&=g_{\sum_{k=1}^\infty (\one_{E_k}-(\one_{E_k})_\varepsilon)\cdot u_k} 
\le \sum_{k=1}^\infty g_{(\one_{E_k}-(\one_{E_k})_\varepsilon)\cdot u_k} \\ 
&=\sum_{k=1}^\infty \left\vert\one_{E_k}-(\one_{E_k})_\varepsilon\right\vert\cdot g_{u_k}
 \xrightarrow{\varepsilon\rightarrow 0}0,
\end{align*}
$\mathcal{L}^1$-almost everywhere on $(0,T)$.
It follows that $g_{u-u_\varepsilon}\to0$  $\mu\otimes\mathcal{L} ^1$-almost everywhere in $\Omega_T$ as $\varepsilon\to0$.
Again, a standard mollifier argument implies $g_{u-u_\varepsilon}\to0$ in $L^p_{\mathrm{loc}}(\Omega_T)$ as $\varepsilon\to0$.

It remains to prove that $g_{u(t-s)-u(t)}\to0$ in $L^p_{\mathrm{loc}}(\Omega_T)$ as $s\to0$, uniformly in $t$. 
As above, we have
\[
 g_{u(t-s)-u(t)}=\sum_{k=1}^\infty\left\vert\one_{E_k}(t-s)-\one_{E_k}(t)\right\vert\cdot g_{u_k},
\]
for every $s>0$.
 
Let $\Omega'\times (t_{1},t_{2})\Subset \Omega_{T}$. 
By Fubini's theorem and Minkowski's inequality, we have
\begin{align*}
&\left( \int_{\Omega'\times (t_1,t_2)} g_{u(t-s)-u(t)}^p\,\d\mu\,\d t\right)^\frac{1}{p}
=\left(\int_{\Omega'\times (t_1,t_2)} \left(\sum_{k=1}^\infty\left\vert\one_{E_k}(t-s)-\one_{E_k}(t)\right\vert\cdot g_{u_k}\right)^p\,\d\mu\,\d t\right)^\frac{1}{p} \\ 
&\qquad\le \left(\int_{t_1}^{t_2}\int_{\Omega'}\left(\sum_{k=1}^\infty\left\vert\one_{E_k}(t-s)-\one_{E_k}(t)\right\vert\right)^p
\left(\sum_{k=1}^\infty g_{u_k}\right)^p\,\d\mu\,\d t\right)^\frac{1}{p} 
\\
&\qquad=\left(\int_{t_1}^{t_2}\left(\sum_{k=1}^\infty\left\vert\one_{E_k}(t-s)-\one_{E_k}(t)\right\vert\right)^p\,\d t\right)^\frac{1}{p}
\left(\int_{\Omega'}\left(\sum_{k=1}^\infty g_{u_k}\right)^p\,\d\mu\right)^\frac{1}{p} \\
&\qquad=\left\Vert\sum_{k=1}^\infty\left(\one_{E_k}(t-s)-\one_{E_k}(t)\right)\right\Vert_{L^p( (t_{1},t_{2}))}
\left\Vert\sum_{k=1}^\infty g_{u_k}\right\Vert_{L^p(\Omega')} \\ 
&\qquad\le \sum_{k=1}^\infty \|\one_{E_k}(t-s)-\one_{E_k}(t)\|_{L^p( (t_{1},t_{2}))}\cdot\|g_{u_k}\|_{L^p(\Omega')} \\ 
&\qquad=\sum_{k=1}^\infty\left(\int_{t_1}^{t_2}\left\vert\one_{E_k}(t-s)-\one_{E_k}(t)\right\vert^p\,\d t\right)^\frac{1}{p}\left(\int_{\Omega'} g_{u_k}^p\,\d\mu\right)^\frac{1}{p}.
\end{align*}
Since $\one_{E_k}\in L^p(0,T)$, $k\in\N$, the expression above vanishes as $s\to0$ by the continuity of translations on $L^p$ functions.
\end{proof}

\subsection{$BV$ functions}

Next we recall the definition and basic properties of functions of bounded variation on metric spaces, see \cite{mir}.
The total variation of $u\in L_{\mathrm{loc}}^{1}(X)$ is defined as
\begin{equation*}
\Vert Du\Vert(X)
=\inf\left\{\liminf_{i\rightarrow\infty}\int_X g_{u_{i}}\,\d\mu\right\},
\end{equation*}
where the infimum is taken over all sequences $(u_i)_{i\in\N}$ with $u_{i}\in \textrm{Lip}_{\loc}(X)$ for every $i\in\N$ and $u_{i}\rightarrow u\ \textrm{in}\ L^{1}_{\loc}(X)$ as $i\to\infty$.
Here $g_{u_{i}}$ is a 1-weak upper gradient of $u_{i}$ and $\textrm{Lip}_{\loc}(X)$ denotes the class of functions that are Lipschitz continuous on compact subsets of $X$. We say that a function $u\in L^{1}(X)$ is of bounded variation, and denote $u\in BV(X)$, 
if $\Vert Du\Vert(X)<\infty$. 
By replacing $X$ with an open set $\Omega\subset X$ in the definition of the total variation, we can define $\Vert Du\Vert(\Omega)$. 
A function $u\in BV_{\loc}(\Omega)$ if $u\in BV(\Omega')$ for all open sets $\Omega'\Subset \Omega$.
For an arbitrary set $A\subset X$, we set
\begin{equation*}
\Vert Du\Vert(A)=\inf\lbrace\Vert Du\Vert(U): A\subset U,\ U\subset X\ \textrm{is open}\rbrace.
\end{equation*}
If $u\in BV(\Omega)$, then $\Vert Du\Vert(A)$ is a finite Radon measure on $\Omega$ by \cite[Theorem 3.4]{mir}.
For the following result, see \cite[Theorem 4.3]{Lahti2017}.

\begin{theorem}
Let $\Omega\subset X$ be an open set and $u\in L_{\mathrm{loc}}^{1}(\Omega)$. If $\|Du\|(\Omega)<\infty$, then
\[
\Vert Du\Vert(\Omega)
=\inf\left\{\liminf_{i\rightarrow\infty}\int_\Omega g_{u_{i}}\,\d\mu: 
u_{i}\in N^{1,1}_{\loc}(\Omega),\ u_{i}\rightarrow u\ \textrm{in $L^{1}_{\loc}(\Omega)$ as $i\to\infty$}\right\},
\]
where $g_{u_i}$ is the minimal $1$-weak upper gradient of $u_i$ in $\Omega$.
\end{theorem}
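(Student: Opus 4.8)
The plan is to prove the two inequalities separately. Denote by $V$ the infimum on the right-hand side, taken over sequences in $N^{1,1}_{\loc}(\Omega)$ converging to $u$ in $L^1_{\loc}(\Omega)$, and recall that $\|Du\|(\Omega)$ is the analogous infimum over sequences in $\Lip_{\loc}(\Omega)$. Since every locally Lipschitz function on $\Omega$ belongs to $N^{1,1}_{\loc}(\Omega)$ (with the pointwise local Lipschitz constant, or any upper gradient, dominating the minimal $1$-weak upper gradient), any admissible sequence for $\|Du\|(\Omega)$ is also admissible for $V$. Hence $V\le\|Du\|(\Omega)$, and this direction requires no hypotheses beyond the definitions.

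The substantive direction is $\|Du\|(\Omega)\le V$. Here I would take a competitor sequence $(u_i)\subset N^{1,1}_{\loc}(\Omega)$ with $u_i\to u$ in $L^1_{\loc}(\Omega)$ and $\liminf_i\int_\Omega g_{u_i}\,\d\mu$ close to $V$ (and, since $V\le\|Du\|(\Omega)<\infty$ by the first part, this liminf is finite, so after passing to a subsequence we may assume the integrals $\int_\Omega g_{u_i}\,\d\mu$ are uniformly bounded). The goal is to replace each $u_i$ by a locally Lipschitz function without essentially increasing the upper-gradient energy. The standard tool is the density of locally Lipschitz functions in the Newtonian space under a doubling measure and a Poincaré inequality: for each fixed $i$ one can find $v_{i}\in\Lip_{\loc}(\Omega)$ with $\|v_i-u_i\|_{L^1(\Omega')}<2^{-i}$ on an exhausting sequence of subdomains and $\int_\Omega g_{v_i}\,\d\mu\le\int_\Omega g_{u_i}\,\d\mu+2^{-i}$. (More precisely one works on an increasing sequence $\Omega_j\Subset\Omega$ with $\bigcup_j\Omega_j=\Omega$, applies the local Lipschitz approximation of \cite{bjorn} on each $\Omega_j$, and diagonalizes.) Then $v_i\to u$ in $L^1_{\loc}(\Omega)$ as well, so $(v_i)$ is admissible in the definition of $\|Du\|(\Omega)$, whence
\[
\|Du\|(\Omega)\le\liminf_{i\to\infty}\int_\Omega g_{v_i}\,\d\mu\le\liminf_{i\to\infty}\int_\Omega g_{u_i}\,\d\mu.
\]
Taking the infimum over competitor sequences $(u_i)$ gives $\|Du\|(\Omega)\le V$, and combining the two inequalities finishes the proof.

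The main obstacle is the Lipschitz approximation step: one must produce, from a given Newtonian function on an open (possibly unbounded, non-smooth) set $\Omega$, a locally Lipschitz function that is simultaneously close in $L^1_{\loc}$ and energy-controlled in the upper-gradient norm. This is exactly where the doubling and Poincaré assumptions enter, via a Lipschitz truncation / discrete convolution (partition-of-unity) argument on Whitney-type coverings, and care is needed because $\Omega$ need not be the whole space and the covering must respect $\Omega_j\Subset\Omega$; the diagonalization then patches the local approximations into a single sequence converging in $L^1_{\loc}(\Omega)$. An alternative, if one prefers to avoid constructing the approximation by hand, is to quote directly the known identification of the BV energy with the Newtonian relaxation (this is essentially the content of \cite{Lahti2017}), but I would still present the two-inequality skeleton above as the organizing structure, since the easy inequality $V\le\|Du\|(\Omega)$ is immediate and isolates precisely where the geometric hypotheses are used.
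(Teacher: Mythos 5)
First, a point of comparison: the paper does not prove this statement at all; it is quoted directly from \cite[Theorem 4.3]{Lahti2017}, which is exactly the fallback you mention in your final paragraph. Your easy inequality $V\le\|Du\|(\Omega)$ is correct and immediate, as you say.

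The other inequality, however, contains a genuine gap rather than a routine citation. The approximation you call ``the standard tool'' --- given $u_i\in N^{1,1}_{\loc}(\Omega)$ with $g_{u_i}\in L^1(\Omega)$, produce $v_i\in\Lip_{\loc}(\Omega)$ close to $u_i$ in $L^1_{\loc}(\Omega)$ with $\int_\Omega g_{v_i}\,\d\mu\le\int_\Omega g_{u_i}\,\d\mu+2^{-i}$ --- is, for a single function, equivalent to the inequality $\|Du_i\|(\Omega)\le\int_\Omega g_{u_i}\,\d\mu$, i.e.\ to the nontrivial half of the theorem itself (applied to constant sequences), so it cannot be used as an off-the-shelf input. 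It is also not covered by the Lipschitz-density theorems of \cite{bjorn}: those concern density in the $N^{1,p}$-norm on $X$, whereas here (a) $\Omega$ is an arbitrary open subset which need not inherit the Poincar\'e inequality, (b) your competitors lie only in $N^{1,1}_{\loc}(\Omega)$ with $g_{u_i}\in L^1(\Omega)$, not in $N^{1,1}(\Omega)$, and (c) $p=1$ is precisely the endpoint where the maximal-function truncation argument is delicate. Moreover, the construction you sketch does not produce admissible competitors: approximating on $\Omega_j\Subset\Omega$ and diagonalizing yields functions that are locally Lipschitz, and energy-controlled, only on $\Omega_j$ rather than on all of $\Omega$, while the definition of $\|Du\|(\Omega)$ requires $v_i\in\Lip_{\loc}(\Omega)$ together with control of $\int_\Omega g_{v_i}\,\d\mu$ over the whole set; patching the local approximations with cutoffs introduces terms of the form $|u_i-v|\,g_\eta$ and multiplicative constants that destroy the sharp energy bound. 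If one grants the single-function inequality $\|Dv\|(\Omega)\le\int_\Omega g_v\,\d\mu$ for $v\in N^{1,1}_{\loc}(\Omega)$ with $g_v\in L^1(\Omega)$, a cleaner route is to combine it with the $L^1_{\loc}$-lower semicontinuity of the total variation and skip the diagonalization altogether; but that inequality is precisely the content of \cite[Theorem 4.3]{Lahti2017}, so in the end the only rigorous option in your proposal is the citation the paper itself uses.
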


If the space supports the Poincar\'e inequality in \eqref{qp-Poincare}, by an approximation argument, for
every $u\in BV_{\loc}(X)$ and every ball $B_\rho(x_{0})$ in $X$, we have
\begin{equation}
\label{Sobolev-PoincareBV}
\begin{split}
\dashint_{B_\rho(x_{0})} |u-u_{B_\rho(x_{0})}|\,\d\mu 
&\le\left(\dashint_{B_{\rho}(x_{0})} |u-u_{B_{\rho}(x_{0})}|^{\frac{Q}{Q-1}}\,\d\mu\right)^{\frac{Q-1}{Q}} \\
&\le C\rho\frac{\|Du\|(B_{2\tau\rho}(x_{0}))}{\mu(B_{2\tau\rho}(x_{0}))},
\end{split}
\end{equation}
where the constant $C$ and the dilation factor $\tau$ are the same as in the Sobolev-Poincar\'e inequality in \eqref{Sobolev-Poincare}.
Next we state a Sobolev type inequality for $BV$ functions which vanish on a large set, see \cite{KinnunenEtAl2014} and \cite[Theorem 5.51]{bjorn} for the corresponding result for Newtonian spaces.

\begin{theorem}\label{Sobolevzero}
Assume that $\mu$ is doubling and that $(X,d,\mu)$ supports a Poincar\'e inequality. 
Then there exists a constant $C=C(C_\mu,C_P)$ such that if $B_\rho(x_{0})$ is a ball in $X$ with $0<\rho<\frac{1}{4}\mathrm{diam}X$ 
and $u\in BV(X)$ with $u=0$ in $X\setminus B_\rho(x_{0})$, then
\begin{equation}\label{sobolevineq}
\left(\dashint_{B_\rho(x_{0})}\vert u \vert^{\frac Q{Q-1}}\,\d\mu\right)^{\frac{Q-1}Q}
\leq C\rho\frac{\|Du\|(B_{\rho}(x_{0}))}{\mu(B_{\rho}(x_{0}))},
\end{equation}
where $Q$ is as in \eqref{desigualdadradios}.
\end{theorem}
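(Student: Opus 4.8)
The plan is to deduce the Sobolev inequality for $BV$ functions vanishing outside $B_\rho(x_0)$ from the analogous statement for Newtonian functions, stated in \cite[Theorem 5.51]{bjorn}, by means of the approximation theorem for $BV$ functions recalled above together with a careful truncation-and-extension argument. The main obstacle is that the defining approximating sequence $(u_i)$ for $\|Du\|(\Omega)$ need not respect the condition of vanishing outside $B_\rho(x_0)$; the Lipschitz (or Newtonian) approximants converge to $u$ only in $L^1_{\loc}$ and may be nonzero on all of $X$. So the crux of the argument is to modify the approximating sequence so that each member is supported (up to a controlled error) in a ball only slightly larger than $B_\rho(x_0)$, while keeping the upper gradient integrals under control.

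First I would fix a radius $\rho<\frac14\mathrm{diam}X$ and, for a parameter $\rho<\sigma<\rho'$ with $\rho'<\frac14\mathrm{diam}X$, pick a Lipschitz cutoff $\eta$ with $0\le\eta\le1$, $\eta=1$ on $B_\sigma(x_0)$, $\eta=0$ outside $B_{\rho'}(x_0)$, and $g_\eta\le C/(\rho'-\sigma)$. Given an approximating sequence $u_i\in N^{1,1}_{\loc}(X)$ with $u_i\to u$ in $L^1_{\loc}(X)$ and $\liminf_i\int_X g_{u_i}\,\d\mu=\|Du\|(X)=\|Du\|(B_\rho(x_0))$ (using here that $u$ vanishes outside $B_\rho(x_0)$, so the variation is carried inside any ball containing $B_\rho(x_0)$), set $v_i=\eta u_i$. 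Then $v_i\to \eta u=u$ in $L^1_{\loc}(X)$, each $v_i$ vanishes outside $B_{\rho'}(x_0)$, and by the product rule for upper gradients, $g_{v_i}\le\eta g_{u_i}+|u_i|g_\eta\le g_{u_i}+\frac{C}{\rho'-\sigma}|u_i|$. Since $u=0$ outside $B_\rho(x_0)$, the term $\int_{B_{\rho'}\setminus B_\sigma}|u_i|\,\d\mu\to\int_{B_{\rho'}\setminus B_\sigma}|u|\,\d\mu=0$ as $i\to\infty$, so the extra contribution is asymptotically negligible and $\liminf_i\int_X g_{v_i}\,\d\mu\le\|Du\|(B_\rho(x_0))$.

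Next I would apply the Newtonian Sobolev inequality for functions vanishing on a large set, \cite[Theorem 5.51]{bjorn}, to each $v_i$ on the ball $B_{\rho'}(x_0)$, obtaining
\[
\left(\dashint_{B_{\rho'}(x_0)}|v_i|^{\frac{Q}{Q-1}}\,\d\mu\right)^{\frac{Q-1}{Q}}
\le C\rho'\dashint_{B_{\rho'}(x_0)}g_{v_i}\,\d\mu,
\]
with $C=C(C_\mu,C_P)$. Letting $i\to\infty$, using $v_i\to u$ in $L^1(B_{\rho'}(x_0))$ (hence, after passing to a subsequence, $\mu$-a.e.) together with Fatou's lemma on the left side and the bound on $\liminf_i\int g_{v_i}$ on the right, I get
\[
\left(\dashint_{B_{\rho'}(x_0)}|u|^{\frac{Q}{Q-1}}\,\d\mu\right)^{\frac{Q-1}{Q}}
\le C\rho'\frac{\|Du\|(B_\rho(x_0))}{\mu(B_{\rho'}(x_0))}.
\]
Finally, since $u=0$ outside $B_\rho(x_0)$, the left-hand integral over $B_{\rho'}(x_0)$ equals the integral over $B_\rho(x_0)$, and using the doubling property \eqref{desigualdadradios} to compare $\mu(B_{\rho'}(x_0))$ and $\mu(B_\rho(x_0))$ (their ratio is bounded once $\rho'\le 2\rho$, say) and $\rho'\le2\rho$, I would absorb the harmless constants and conclude the desired inequality \eqref{sobolevineq}, after renaming constants so that $C$ depends only on $C_\mu$ and $C_P$. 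One should also note the standard reduction to the case $\|Du\|(B_\rho(x_0))<\infty$, since otherwise there is nothing to prove; this is automatic here as $u\in BV(X)$.
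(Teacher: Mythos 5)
Your proof is correct in substance, but it takes a genuinely different route from the paper's. You reduce the $BV$ inequality to the Newtonian Sobolev inequality of \cite[Theorem 5.51]{bjorn}: you pick approximants $u_i\in N^{1,1}_{\loc}(X)$ realizing $\Vert Du\Vert(X)$, multiply by a cutoff $\eta$ supported in a slightly larger ball, kill the error term $|u_i|g_\eta$ by the $L^1$-convergence $u_i\to u=0$ on the annulus, apply the Newtonian inequality to $v_i=\eta u_i$, and pass to the limit with Fatou on the left and the $\liminf$ bound on the gradient integrals on the right. The paper instead argues directly at the $BV$ level: it applies the Sobolev--Poincar\'e inequality \eqref{Sobolev-PoincareBV} on $B_{2\rho}(x_0)$, bounds the mean $|u_{B_{2\rho}(x_0)}|$ via H\"older's inequality using $u=0$ on $B_{2\rho}(x_0)\setminus B_\rho(x_0)$, and absorbs that term thanks to the ratio bound $\mu(B_\rho(x_0))/\mu(B_{2\rho}(x_0))\le\gamma<1$ of \cite[Lemma 3.7]{bjorn}, which is where the restriction $\rho<\tfrac14\mathrm{diam}\,X$ enters. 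Your route buys a direct transfer of the known Newtonian result without any absorption trick, at the price of approximation bookkeeping that you should make explicit: that $v_i$, extended by zero, lies in $N^{1,1}(X)$ (properness of $X$ plus $u_i\in N^{1,1}_{\loc}(X)$ and the compact support of $\eta$), so that the cited theorem is applicable; that the subsequence is chosen to realize simultaneously the almost everywhere convergence and the $\liminf$ of $\int g_{v_i}\,\d\mu$; and that the enlarged radius $\rho'$ stays within the range admitted by \cite[Theorem 5.51]{bjorn} (taking $\rho'$ close to $\rho$ gives you this for free). Finally, your identification $\Vert Du\Vert(X)=\Vert Du\Vert(B_\rho(x_0))$ ignores possible mass of $\Vert Du\Vert$ on the sphere $\partial B_\rho(x_0)$; the paper's own step $\Vert Du\Vert(B_{4\tau\rho}(x_0)\setminus B_\rho(x_0))=0$ makes exactly the same identification, so this is a shared convention rather than a defect specific to your argument.
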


\begin {proof}
By Minkowski's inequality and \eqref{Sobolev-PoincareBV} we have
\[
\begin{split}
\left(\dashint_{B_{2\rho}(x_{0})}\vert u \vert^{\frac Q{Q-1}}\,\d\mu\right)^{\frac{Q-1}Q}
&\le\left(\dashint_{B_{2\rho}(x_{0})}\vert u-u_{B_{2\rho}(x_{0})} \vert^{\frac Q{Q-1}}\,\d\mu\right)^{\frac{Q-1}Q}
+|u_{B_{2\rho}(x_{0})}|
\\
&\le C\rho\frac{\|Du\|(B_{4\tau\rho}(x_{0}))}{\mu(B_{4\tau\rho}(x_{0}))}
+|u_{B_{2\rho}(x_{0})}|.
\end{split}
\]
H\"older's inequality and the fact that $u=0$ in $B_{2\rho}(x_{0})\setminus B_{\rho}(x_{0})$ imply that
\[
\begin{split}
|u_{B_{2\rho}(x_{0})}|
&\le\dashint_{B_{2\rho}(x_{0})}\vert u \vert\,\d\mu
=\dashint_{B_{2\rho}(x_{0})}\vert u \vert\chi_{B_{\rho}(x_{0})}\,\d\mu\\
&\le\left(\frac{\mu(B_{\rho}(x_{0}))}{\mu(B_{2\rho}(x_{0}))}\right)^{\frac1Q}
\left(\dashint_{B_{2\rho}(x_{0})}\vert u \vert^{\frac Q{Q-1}}\,\d\mu\right)^{\frac{Q-1}Q}.
\end{split}
\]
By \cite[Lemma 3.7]{bjorn} we have $\frac{\mu(B_{\rho}(x_{0}))}{\mu(B_{2\rho}(x_{0}))}\le\gamma<1$, where $\gamma=C(C_\mu)$, and we obtain
\[
(1-\gamma^{\frac1Q})\left(\dashint_{B_{2\rho}(x_{0})}\vert u \vert^{\frac Q{Q-1}}\,\d\mu\right)^{\frac{Q-1}Q}
\le C\rho\frac{\|Du\|(B_{4\tau\rho}(x_{0}))}{\mu(B_{4\tau\rho}(x_{0}))}
\le C\rho\frac{\|Du\|(B_{\rho}(x_{0}))}{\mu(B_{\rho}(x_{0}))},
\]
where we used the fact that $u=0$ in $B_{4\rho}(x_{0})\setminus B_{\rho}(x_{0})$ and thus $\|Du\|(B_{4\rho}(x_{0})\setminus B_{\rho}(x_{0}))=0$. The claim follows since $0<\gamma<1$.
\end{proof}

The following isoperimetric inequality in \cite[Lemma 2.2]{dbgv} has been originally obtained by De Giorgi.
See also \cite[Lemma 5.2]{cozzi} for the case $p>1$.
We give a proof that is based on the Sobolev-Poincar\'e type inequality for $BV$ in \eqref{Sobolev-PoincareBV}.
\begin{lemma}\label{DeGiorgiBV}
Assume that $\mu$ is doubling and that $(X,d,\mu)$ supports a Poincar\'e inequality.  
Then there exists a constant $C=C(C_\mu,C_P)$ such that if $B_{\rho}(x_{0})$ is a ball in $X$ and $u\in BV_{\loc}(X)$, then for $k<l$ real numbers we get
\begin{equation*}
\frac{(l-k)\mu(B_{\rho}(x_{0})\cap\{u>l\})}{\mu(B_{\rho}(x_{0}))}
\leq\frac{C\rho}{\mu(B_{\rho}(x_{0})\cap\{u\leq k\})}\Vert Du\Vert(\{k<u<l\}).
\end{equation*}
\end{lemma}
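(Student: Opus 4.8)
The plan is to reduce this De Giorgi-type isoperimetric inequality to the Sobolev--Poincar\'e inequality \eqref{Sobolev-PoincareBV} by applying it to a suitable truncation of $u$. Fix the ball $B=B_\rho(x_0)$ and, for $k<l$, set
\[
v=\min\{\max\{u,k\},l\}-k,
\]
so that $0\le v\le l-k$, with $v=0$ on $\{u\le k\}\cap B$ and $v=l-k$ on $\{u\ge l\}\cap B$. Since $u\in BV_{\loc}(X)$, the truncation $v$ is again in $BV_{\loc}(X)$, and by the locality of the variation measure (truncations only introduce jumps on level sets, which are not seen here because $v$ is continuous as a composition of $1$-Lipschitz maps with $u$) one has $\|Dv\|(A)\le\|Du\|(A\cap\{k<u<l\})$ for any open $A$; in particular $\|Dv\|(B)\le\|Du\|(\{k<u<l\})$. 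Applying \eqref{Sobolev-PoincareBV} (localized to the ball $B$, which requires the standard doubling-dependent adjustment of radii; alternatively one quotes the version of the Sobolev--Poincar\'e inequality stated in \eqref{Sobolev-PoincareBV} directly) gives
\[
\dashint_{B}|v-v_{B}|\,\d\mu\le C\rho\,\frac{\|Dv\|(B_{2\tau\rho}(x_0))}{\mu(B_{2\tau\rho}(x_0))}.
\]

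Next I would bound the left-hand side from below by something involving $\mu(\{u>l\}\cap B)$ and $\mu(\{u\le k\}\cap B)$. The standard trick: on the set $\{u\le k\}\cap B$ we have $v=0$, so $|v-v_B|=v_B$ there, while on $\{u\ge l\}\cap B$ we have $v=l-k$, so $|v-v_B|=(l-k)-v_B$ there. Hence
\[
\int_{B}|v-v_B|\,\d\mu\ge v_B\,\mu(\{u\le k\}\cap B)+\big((l-k)-v_B\big)\mu(\{u\ge l\}\cap B).
\]
Writing $a=\mu(\{u\le k\}\cap B)$, $b=\mu(\{u\ge l\}\cap B)$, the right-hand side as a function of $v_B\in[0,l-k]$ is linear with a minimum at one of the endpoints, but more usefully one observes that for \emph{any} value of $v_B$ the quantity is at least $(l-k)\cdot\frac{ab}{a+b}\ge (l-k)\cdot\frac{ab}{\mu(B)}$ — this is the elementary inequality $\min_{t\in[0,m]}\{ta+(m-t)b\}$ is $m\min\{a,b\}$, but the sharper route is to note $v_B a+((l-k)-v_B)b\ge \frac{(l-k)ab}{a+b}$ by splitting into the cases $v_B\le \frac{(l-k)b}{a+b}$ and $v_B\ge\frac{(l-k)b}{a+b}$. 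Either way, combining with the displayed Sobolev--Poincar\'e bound and dividing through by $\mu(B)$, using $\mu(B_{2\tau\rho})\ge\mu(B_\rho)$ up to a doubling constant (absorbed into $C$) and $a+b\le\mu(B)$, yields
\[
\frac{(l-k)\,\mu(\{u>l\}\cap B)}{\mu(B)}\le \frac{(l-k)\,b}{\mu(B)}\le \frac{C\rho}{\mu(\{u\le k\}\cap B)}\,\|Du\|(\{k<u<l\}),
\]
which is the claim (noting $\mu(\{u\ge l\}\cap B)\ge\mu(\{u>l\}\cap B)$).

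The main obstacle is the measure-theoretic bookkeeping around the localization of \eqref{Sobolev-PoincareBV} and the behavior of the variation measure under truncation. Specifically, \eqref{Sobolev-PoincareBV} as stated has the gradient integrated over the dilated ball $B_{2\tau\rho}(x_0)$ while the left side is over $B_\rho(x_0)$; one must be careful that the inequality $\|Dv\|(B_{2\tau\rho})\le\|Du\|(\{k<u<l\})$ still holds — it does, since the truncation inequality $\|Dv\|(A)\le\|Du\|(A\cap\{k<u<l\})$ is valid on \emph{any} open set $A$, and $\{k<u<l\}\supset B_{2\tau\rho}\cap\{k<u<l\}$ is only made larger by dropping the ball. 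The claim $\|Dv\|(A)\le\|Du\|(A\cap\{k<u<l\})$ itself should be justified via the definition of total variation through approximating Lipschitz (or Newtonian) sequences together with the locality $g_{u_i}=0$ a.e. on $\{u_i=k\}\cup\{u_i=l\}$ recalled in the calculus rules, or simply cited from \cite{mir}. I would present the truncation-variation estimate as a short lemma or an inline remark, and then the rest is the elementary two-line manipulation above.
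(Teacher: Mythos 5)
Your proof is correct, and it takes a genuinely different route through the middle of the argument than the paper does, although both start from the same truncation $v$ (the paper uses $v=\min\{u,l\}-k$ on $\{u>k\}$, $v=0$ on $\{u\le k\}$, which is your $\min\{\max\{u,k\},l\}-k$) and both finish with the same truncation--locality step $\Vert Dv\Vert(B_{2\tau\rho})=\Vert Du\Vert(B_{2\tau\rho}\cap\{k<u<l\})\le\Vert Du\Vert(\{k<u<l\})$, which the paper also asserts without a detailed justification, so your level of rigor there matches the source. The difference is in how the mean value is handled: you use only the $L^1$ Poincar\'e inequality from \eqref{Sobolev-PoincareBV} and the elementary two-set lower bound $\int_B|v-v_B|\,\d\mu\ge v_B\,\mu(\{u\le k\}\cap B)+\bigl((l-k)-v_B\bigr)\mu(\{u\ge l\}\cap B)\ge\frac{(l-k)ab}{a+b}$, which immediately yields the claim after dividing by $a$ and using $a+b\le\mu(B)$ (and $\mu(B_\rho)\le\mu(B_{2\tau\rho})$ needs no doubling constant, just monotonicity). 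The paper instead works with the $L^{Q/(Q-1)}$ Sobolev--Poincar\'e inequality, absorbs the average $|v_B|$ via H\"older on the support $\{u>k\}$, lower-bounds the $L^{Q/(Q-1)}$ mean by $(l-k)\bigl(\mu(\{u>l\}\cap B)/\mu(B)\bigr)^{1-1/Q}$, and extracts the factor $\mu(\{u\le k\}\cap B)$ with Bernoulli's inequality, producing a constant of the form $CQ$ (still $C(C_\mu,C_P)$ since $Q=\log_2 C_\mu$). Your version is the classical $L^1$ form of De Giorgi's isoperimetric lemma: it is more elementary, avoids the exponent $Q/(Q-1)$, the H\"older absorption and Bernoulli entirely, and gives a cleaner constant; the paper's route buys nothing extra here beyond staying uniform with the $L^{Q/(Q-1)}$ machinery it has already set up in \eqref{Sobolev-PoincareBV}. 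The only small points to make explicit in a write-up are that $v_B\in[0,l-k]$ (so the two-bucket minimization applies) and that the statement is vacuous when $\mu(\{u\le k\}\cap B)=0$, so the division by $a$ is harmless.
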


\begin{proof}
Let
\begin{equation*}
v=\begin{cases}
\min\lbrace u,l\rbrace-k, & \textrm{if}\ u>k,\\
0,&\textrm{if}\ u\leq k.
\end{cases} 
\end{equation*}
Notice that
\begin{align*}
(l-k)\mu(B_{\rho}(x_{0})\cap\{u>l\})^{\frac Q{Q-1}}
&=\left(\int_{B_{\rho}(x_{0})\cap\{u>l\}}\vert v\vert^{\frac Q{Q-1}}\,\d\mu\right)^{\frac{Q-1}Q}\\
&\le\left(\int_{B_{\rho}(x_{0})}\vert v\vert^{\frac Q{Q-1}}\,\d\mu\right)^{\frac{Q-1}Q}.
\end{align*}
By the Sobolev-Poincar\'e inequality for $BV$ in \eqref{Sobolev-PoincareBV}, we have
\[
\begin{split}
\left(\dashint_{B_\rho(x_{0})}\vert v \vert^{\frac Q{Q-1}}\,\d\mu\right)^{\frac{Q-1}Q}
&\le\left(\dashint_{B_\rho(x_{0})}\vert v-v_{B_\rho(x_{0})}\vert^{\frac Q{Q-1}}\,\d\mu\right)^{\frac{Q-1}Q}
+\left\vert v_{B_\rho(x_{0})}\right\vert\\
&\le C\rho\frac{\|Dv\|(B_{2\tau\rho}(x_{0}))}{\mu(B_{2\tau\rho}(x_{0}))}
+\dashint_{B_\rho(x_{0})}\vert v \vert\,\d\mu,
\end{split}
\]
where
\[
\begin{split}
\dashint_{B_\rho(x_{0})}\vert v \vert\,\d\mu
&=\frac1{\mu(B_\rho(x_{0}))}\int_{B_\rho(x_{0})\cap\{u>k\}}\vert v \vert\,\d\mu\\
&\le\frac1{\mu(B_\rho(x_{0}))}\left(\int_{B_\rho(x_{0})\cap\{u>k\}}\vert v \vert^{\frac Q{Q-1}}\,\d\mu\right)^{\frac{Q-1}Q}
\mu(B_\rho(x_{0})\cap\{u>k\})^{\frac1Q}\\
&=\left(\frac{\mu(B_\rho(x_{0})\cap\{u>k\})}{\mu(B_\rho(x_{0}))}\right)^{\frac1Q}
\left(\dashint_{B_\rho(x_{0})}\vert v \vert^{\frac Q{Q-1}}\,\d\mu\right)^{\frac{Q-1}Q}.
\end{split}
\]
This implies
\begin{equation}\label{SobolevIneq}
\left(1-\left(\frac{\mu(B_\rho(x_{0})\cap\{u>k\})}{\mu(B_\rho(x_{0}))}\right)^{\frac1Q}\right)
\left(\dashint_{B_\rho(x_{0})}\vert v \vert^{\frac Q{Q-1}}\,\d\mu\right)^{\frac{Q-1}Q}
\le C\rho\frac{\|Dv\|(B_{2\tau\rho}(x_{0}))}{\mu(B_{2\tau\rho}(x_{0}))}.
\end{equation}

On the other hand
\[
\begin{split}
\left(\dashint_{B_\rho(x_{0})}\vert v \vert^{\frac Q{Q-1}}\,\d\mu\right)^{\frac{Q-1}Q}
&\ge(l-k)\left(\frac{\mu(B_\rho(x_{0})\cap\{u>l\})}{\mu(B_\rho(x_{0}))}\right)^{1-\frac1Q}\\
&\ge\frac{(l-k)\mu(B_\rho(x_{0})\cap\{u>l\})}{\mu(B_\rho(x_{0}))}
\left(\frac{\mu(B_\rho(x_{0})\cap\{u>k\})}{\mu(B_\rho(x_{0}))}\right)^{-\frac1Q}.
\end{split}
\]
By \eqref{SobolevIneq}, we have
\[
\left(\left(\frac{\mu(B_\rho(x_{0})\cap\{u>k\})}{\mu(B_\rho(x_{0}))}\right)^{-\frac1Q}-1\right)
\frac{(l-k)\mu(B_\rho(x_{0})\cap\{u>l\})}{\mu(B_\rho(x_{0}))}
\le C\rho\frac{\|Dv\|(B_{2\tau\rho}(x_{0}))}{\mu(B_{2\tau\rho}(x_{0}))},
\]
where, by  Bernoulli's inequality, we have
\[
\begin{split}
\left(\frac{\mu(B_\rho(x_{0})\cap\{u>k\})}{\mu(B_\rho(x_{0}))}\right)^{-\frac1Q}-1
&=\left(\frac{\mu(B_\rho(x_{0}))}{\mu(B_\rho(x_{0})\cap\{u>k\})}\right)^{\frac1Q}-1\\
&\ge\frac1Q\left(\frac{\mu(B_\rho(x_{0}))}{\mu(B_\rho(x_{0})\cap\{u>k\})}-1\right)\\
&=\frac1Q\frac{\mu(B_\rho(x_{0})\cap\{u\le k\})}{\mu(B_\rho(x_{0})\cap\{u>k\})}\\
&\ge\frac1Q\frac{\mu(B_\rho(x_{0})\cap\{u\le k\})}{\mu(B_\rho(x_{0}))}.
\end{split}
\]
This implies
\[
\frac{(l-k)\mu(B_\rho(x_{0})\cap\{u>l\})}{\mu(B_\rho(x_{0}))}
\le\frac{CQ\rho}{\mu(B_{\rho}(x_{0})\cap\{u\leq k\})}
\|Dv\|(B_{2\tau\rho}(x_{0})),
\]
where
\begin{align*}
\|Dv\|(B_{2\tau\rho}(x_{0}))&=\|Dv\|(B_{2\tau\rho}(x_{0})\cap\{k<u<l\})+\|Dv\|(B_{2\tau\rho}(x_{0})\setminus\{k<u<l\})\\
&=\|Dv\|(B_{2\tau\rho}(x_{0})\cap\{k<u<l\})\\
&=\|D(u-k)\|(B_{2\tau\rho}(x_{0})\cap\{k<u<l\})\\
&=\|Du\|(B_{2\tau\rho}(x_{0})\cap\{k<u<l\})\\
&\leq\|Du\|(\{k<u<l\}).
\end{align*}
This proves the claim.
\end{proof}

We also apply parabolic BV functions.

\begin{definition}
Let $\Omega\subset X$ be an open set and $0<T<\infty$. 
We consider a parabolic $BV$ space $L^{1}(0, T;BV(\Omega))$, which consists of  functions 
$u:(0,T)\to BV(\Omega)$ such that 
\[
\int_0^T\left(\Vert u(t)\Vert_{L^1(\Omega)}+\Vert Du(t)\Vert (\Omega)\right)\,\d t<\infty.
\]
Here 
\[
\Vert Du(t)\Vert(\Omega)
=\inf\left\{\liminf_{i\rightarrow\infty}\int_\Omega g_{u_{i}}(t)\,\d\mu\right\},
\]
where the infimum is taken over all sequences $(u_i)_{i\in\N}$, with
$u_{i}\in L^1_{\loc}(0,T;N_{\loc}^{1,1}(\Omega))$ for every $i\in\N$ and $u_{i}\rightarrow u$ in $L^1_{\loc}(0,T;N_{\loc}^{1,1}(\Omega))$ as $i\to\infty$.
We say that $u\in L^{1}_{\loc}(0, T;BV_{\loc}(\Omega))$, if for every $\Omega'\times (t_{1},t_{2})\Subset \Omega_{T}$, we have $u\in  L^{1}(t_{1}, t_{2};BV(\Omega'))$.
\end{definition}

Note that we do not assume strong measurability in the sense of Bochner, which is too restrictive for the parabolic $BV$ theory.
The space $L^{1}(0, T;BV(\Omega))$ satisfies a weaker measurability condition, see \cite{BuffaCollinsPacchiano},
which implies that $t\mapsto \Vert Du(t)\Vert$ is a Lebesgue measurable function on $(0,T)$.
For $u\in L^{1}(0, T;BV(\Omega))$ there exists a $(\mu\otimes\mathcal{L}^{1})$-measurable function
$u:\Omega_T\to[-\infty,\infty]$ such that $u(\cdot,t)\in BV(\Omega)$ for $\mathcal{L}^{1}$-almost every $t\in(0,T)$.

Next we consider a Sobolev inequality for the parabolic $BV$.

\begin{prop}\label{1ap} 
There exists a constant $C=C(C_\mu,C_P)$, such that if $B_\rho(x_{0})$ is a ball in $X$ with $0<\rho<\frac{1}{4}\mathrm{diam}X$ 
and $u\in L_{\loc}^{1}(0,T;BV_{\loc}(X))$ with $u=0$ in $(X\setminus B_\rho(x_{0}))\times(0,T)$, then
\[
\int_{t_1}^{t_2}\int_{B_\rho(x_{0})}|u(t)|^{\kappa}\,\d\mu\,\d t
\le C\rho\int_{t_1}^{t_2}
\Vert Du(t)\Vert(B_\rho(x_{0}))\,\d t
\left(\underset{t_1<t<t_2}{\mathrm{esssup}}\,\dashint_{B_\rho(x_{0})}|u(t)|^{2}\,\d\mu\right)^{\frac1Q}.
\]
where $0<t_1<t_2<T$, $\kappa=\frac{Q+2}{Q}$ and $Q$ is as in \eqref{desigualdadradios}.
\end{prop}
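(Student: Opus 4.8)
The plan is to follow the classical parabolic Sobolev interpolation argument (as in the $p>1$ case, e.g. \cite{dbgv}), adapted to the $BV$ setting using the spatial inequality from Theorem \ref{Sobolevzero}. First I would fix $t\in(t_1,t_2)$ and apply the Sobolev-type inequality for $BV$ functions vanishing outside $B_\rho(x_0)$, namely \eqref{sobolevineq} with exponent $\tfrac{Q}{Q-1}$, to the function $u(\cdot,t)$. This gives, for a.e.\ $t$,
\[
\left(\dashint_{B_\rho(x_0)}|u(t)|^{\frac{Q}{Q-1}}\,\d\mu\right)^{\frac{Q-1}{Q}}
\le C\rho\,\frac{\Vert Du(t)\Vert(B_\rho(x_0))}{\mu(B_\rho(x_0))}.
\]
Next I would write $\kappa=\tfrac{Q+2}{Q}=\tfrac{Q}{Q-1}\cdot\alpha+2\cdot(1-\alpha)$ with a suitable interpolation weight; the correct choice is to split $|u|^{\kappa}=|u|^{\frac{Q}{Q-1}}\cdot|u|^{\frac{2}{Q}}$ and bound the second factor by an essential supremum in time. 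Concretely, by Hölder's inequality in the spatial variable with exponents $\tfrac{Q}{Q-1}$ and $Q$,
\[
\int_{B_\rho(x_0)}|u(t)|^{\kappa}\,\d\mu
\le\left(\int_{B_\rho(x_0)}|u(t)|^{\frac{Q}{Q-1}}\,\d\mu\right)^{\frac{Q-1}{Q}}
\left(\int_{B_\rho(x_0)}|u(t)|^{2}\,\d\mu\right)^{\frac{1}{Q}}.
\]

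Then I would substitute the $BV$-Sobolev bound for the first factor, pull the time-independent $\esssup$ of $\bigl(\dashint_{B_\rho(x_0)}|u(t)|^2\,\d\mu\bigr)^{1/Q}$ out of the remaining $t$-integral (keeping track of the normalizing powers of $\mu(B_\rho(x_0))$, which combine correctly because the three occurrences of the measure contribute $-1+\tfrac{1}{Q}+(1-\tfrac{1}{Q})\cdot 0$ versus the $+\tfrac1Q$ from the essential supremum term — one has to check the bookkeeping so that no stray power of $\mu(B_\rho(x_0))$ survives, using $0<\rho<\tfrac14\operatorname{diam}X$ only to invoke Theorem \ref{Sobolevzero}), and finally integrate in $t$ over $(t_1,t_2)$. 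This yields exactly the claimed inequality
\[
\int_{t_1}^{t_2}\int_{B_\rho(x_0)}|u(t)|^{\kappa}\,\d\mu\,\d t
\le C\rho\int_{t_1}^{t_2}\Vert Du(t)\Vert(B_\rho(x_0))\,\d t
\left(\esssup_{t_1<t<t_2}\dashint_{B_\rho(x_0)}|u(t)|^{2}\,\d\mu\right)^{\frac1Q}.
\]

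The main obstacle, and the only point requiring genuine care, is the measurability and integrability justification: one must know that $t\mapsto\Vert Du(t)\Vert(B_\rho(x_0))$ is Lebesgue measurable (which holds for $u\in L^1(0,T;BV(\Omega))$ by the weak measurability discussed after the definition of parabolic $BV$) and that $t\mapsto\dashint_{B_\rho(x_0)}|u(t)|^2\,\d\mu$ is measurable with finite essential supremum for the statement to be nonvacuous; one also has to confirm that the $BV$-Sobolev inequality \eqref{sobolevineq} applies to $u(\cdot,t)$ for a.e.\ $t$, i.e.\ that $u(\cdot,t)\in BV(X)$ with $u(\cdot,t)=0$ outside $B_\rho(x_0)$, which follows from the hypothesis $u=0$ in $(X\setminus B_\rho(x_0))\times(0,T)$ together with the $\mu\otimes\mathcal L^1$-measurable representative. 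Everything else is a routine Hölder-and-integrate computation; no metric-space-specific difficulty beyond having the right $BV$ Sobolev inequality at hand, which Theorem \ref{Sobolevzero} supplies.
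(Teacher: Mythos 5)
Your proposal is correct and takes essentially the same route as the paper's proof: spatial H\"older with exponents $\tfrac{Q}{Q-1}$ and $Q$, the $BV$ Sobolev inequality \eqref{sobolevineq} for functions vanishing outside $B_\rho(x_0)$, and then integration in time with the essential supremum pulled out (the powers of $\mu(B_\rho(x_0))$ indeed cancel exactly). One small slip: the factorization should read $|u|^{\kappa}=|u|\cdot|u|^{2/Q}$ rather than $|u|^{\frac{Q}{Q-1}}\cdot|u|^{2/Q}$, since the latter exponents do not sum to $\kappa$; the displayed H\"older inequality you actually use is the correct one and coincides with the paper's.
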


\begin{proof}
H\"{o}lder's inequality and Sobolev's inequality \eqref{sobolevineq} imply
\begin{align*}
\int_{B_\rho(x_{0})}|u(t)|^{\kappa}\,\d\mu
&=\int_{B_\rho(x_{0})}|u(t)|^{1+(\kappa-1)}\,\d\mu\\
&\le\left(\int_{B_\rho(x_{0})}|u(t)|^{\frac{Q}{Q-1}}\d\mu\right)^{\frac{Q-1}Q}
\left(\int_{B_\rho(x_{0})}|u(t)|^{(\kappa-1)Q}\,\d\mu\right)^{\frac1Q}\\
 &=\left(\int_{B_\rho(x_{0})}|u(t)|^{\frac{Q}{Q-1}}\,\d\mu\right)^{\frac{Q-1}Q}
 \left(\int_{B_\rho(x_{0})}|u(t)|^{2}\,\d\mu\right)^{\frac1Q}\\
 &\le C\rho\|Du(t)\|(B_{\rho}(x_{0}))
 \left(\dashint_{B_\rho(x_{0})}|u(t)|^{2}\,\d\mu\right)^{\frac1Q},
\end{align*}
for $\mathcal{L}^{1}$-almost every $t\in(0,T)$.
The assertion follows by integrating over $(t_1,t_2)$.
\end{proof}

\section{Total variation flow}\label{TotalVariationFlow}

We discuss a definition of a variational solution to the total variation flow.

\begin{definition}\label{definitionsolution}
Let $\Omega\subset X$ be an open set and $0<T<\infty$. 
A function $u\in L^{1}_{\loc}(0,T; BV_{\loc}(\Omega))$ is a variational solution to the total variation flow in $\Omega_{T}$, if
\begin{equation}\label{parabolicminimizer}
\int_{0}^{T}\left(\int_{\Omega}-u(t)\frac{\partial\varphi}{\partial t}(t)\,\d\mu+\Vert Du(t)\Vert(\Omega)\right)\,\d t
\leq\int_{0}^{T}\Vert D(u+\varphi)(t)\Vert(\Omega)\,\d t,
\end{equation}
for every $\varphi\in\mathrm{Lip}(\Omega_T)$ with $\supp\varphi\Subset\Omega_T$.
\end{definition}

Boundary terms appear for test functions that do not necessarily vanish on the initial and the last moment of time.

\begin{prop}\label{lateralbdry} 
Let $u\in L_{\loc}^{1}(0,T; BV_{\loc}(\Omega))$ be a  variational solution to the total variation flow in $\Omega_T$ and let
$\Omega'\times (t_{1},t_{2})\Subset \Omega_{T}$, where $0<t_1<t_2<T$ are such that the boundary terms below are defined. 
Then
\begin{equation*}
\begin{split}
\int_{t_{1}}^{t_{2}}\left(\int_{\Omega'}-u(t)\frac{\partial\varphi}{\partial t}(t)\,\d\mu+\Vert Du(t)\Vert(\Omega')\right)\,\d t
&\leq\int_{t_{1}}^{t_{2}}\Vert D(u+\varphi)(t)\Vert(\Omega')\,\d t
-\left[\int_{\Omega'}u(t)\varphi(t)\,\d\mu\right]_{t=t_1}^{t_2},
\end{split}
\end{equation*}
for every $\varphi\in\mathrm{Lip}(\Omega_T)$ with $\supp\varphi\Subset \Omega'\times(0,T)$.
\end{prop}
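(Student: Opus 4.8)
The plan is to localize the global variational inequality \eqref{parabolicminimizer} in time by testing against $\varphi$ multiplied by a piecewise-linear cutoff in the time variable, and then let the cutoff converge to the indicator of $[t_1,t_2]$. Fix $\varphi\in\mathrm{Lip}(\Omega_T)$ with $\supp\varphi\Subset\Omega'\times(0,T)$ and fix $t_1,t_2\in(0,T)$ at which the boundary terms $\int_{\Omega'}u(t)\varphi(t)\,\d\mu$ are defined (i.e.\ Lebesgue points of $t\mapsto\int_{\Omega'}u(t)\varphi(t)\,\d\mu$, which is in $L^1_{\loc}(0,T)$ since $u\in L^1_{\loc}(0,T;BV_{\loc}(\Omega))$ and $\varphi$ is bounded with compact support). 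For small $h>0$ let $\zeta_h:(0,T)\to[0,1]$ be the Lipschitz function that equals $1$ on $[t_1,t_2]$, equals $0$ outside $[t_1-h,t_2+h]$, and is affine on the two transition intervals, so $\zeta_h'=\tfrac1h$ on $(t_1-h,t_1)$ and $\zeta_h'=-\tfrac1h$ on $(t_2,t_2+h)$. Then $\psi_h:=\zeta_h\varphi\in\mathrm{Lip}(\Omega_T)$ with $\supp\psi_h\Subset\Omega_T$, so it is an admissible test function in \eqref{parabolicminimizer}.

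Next I would substitute $\psi_h$ into \eqref{parabolicminimizer} and examine each term. On the left, $\dfrac{\partial\psi_h}{\partial t}=\zeta_h\dfrac{\partial\varphi}{\partial t}+\zeta_h'\varphi$, so the time term splits as
\[
\int_0^T\int_\Omega -u\,\zeta_h\frac{\partial\varphi}{\partial t}\,\d\mu\,\d t
\;+\;\int_0^T\int_\Omega -u\,\zeta_h'\,\varphi\,\d\mu\,\d t .
\]
As $h\to0$, $\zeta_h\to\one_{(t_1,t_2)}$ boundedly and pointwise, so the first integral tends to $\int_{t_1}^{t_2}\int_{\Omega'}-u\frac{\partial\varphi}{\partial t}\,\d\mu\,\d t$ (the integrand is supported in $\Omega'$). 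For the second integral, $\zeta_h'=\tfrac1h\one_{(t_1-h,t_1)}-\tfrac1h\one_{(t_2,t_2+h)}$, so it is
\[
-\frac1h\int_{t_1-h}^{t_1}\!\Big(\int_{\Omega'}u\varphi\,\d\mu\Big)\d t
\;+\;\frac1h\int_{t_2}^{t_2+h}\!\Big(\int_{\Omega'}u\varphi\,\d\mu\Big)\d t
\;\xrightarrow{h\to0}\;
\Big[\int_{\Omega'}u(t)\varphi(t)\,\d\mu\Big]_{t=t_1}^{t_2},
\]
by the Lebesgue point property of $t\mapsto\int_{\Omega'}u(t)\varphi(t)\,\d\mu$. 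For the two total-variation terms I would use that for any Lebesgue-measurable $A\subset(0,T)$, $\int_A\|Du(t)\|(\Omega)\,\d t=\int_0^T\one_A(t)\|Du(t)\|(\Omega)\,\d t$ by monotone/dominated convergence, together with the fact that $\zeta_h\varphi(t)$ agrees with $\varphi(t)$ on $[t_1,t_2]$ and has support in $\Omega'$ for every $t$; hence $\|D(u+\zeta_h\varphi)(t)\|(\Omega)=\|D(u+\varphi)(t)\|(\Omega')+\|Du(t)\|(\Omega\setminus\overline{\Omega'})$ for $t\in[t_1,t_2]$, and $\|D(u+\zeta_h\varphi)(t)\|(\Omega)=\|Du(t)\|(\Omega)$ for $t\notin[t_1-h,t_2+h]$. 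Passing to the limit and canceling the common part $\int_0^T\|Du(t)\|(\Omega\setminus\overline{\Omega'})\,\d t$ (which is finite on $[t_1,t_2]$ and over the transition zones shrinks to zero) leaves exactly the asserted inequality on $\Omega'\times(t_1,t_2)$.

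The main obstacle is the careful bookkeeping on the transition intervals $(t_1-h,t_1)$ and $(t_2,t_2+h)$: there one must show that the contribution of $\int\|D(u+\zeta_h\varphi)(t)\|(\Omega)\,\d t$ minus $\int\|Du(t)\|(\Omega)\,\d t$ over these sets tends to $0$. Since $\zeta_h\in[0,1]$ and $\varphi$ is fixed Lipschitz with compact support, the calculus rule for total variation of a sum (the BV analogue of \cite[Theorem 2.18]{bjorn}, applied through the definition of $\|D\cdot\|$ via approximating Newtonian functions) gives $\big|\,\|D(u+\zeta_h\varphi)(t)\|(\Omega)-\|Du(t)\|(\Omega)\,\big|\le \|D(\zeta_h\varphi)(t)\|(\Omega')\le \|D\varphi(t)\|(\Omega')$ plus a term controlled by $\sup|\varphi|\cdot\mathrm{Lip}(\zeta_h)$? — no: the spatial gradient of $\zeta_h(t)\varphi(x,t)$ in $x$ is just $\zeta_h(t)$ times the spatial upper gradient of $\varphi$, so it is bounded by $g_{\varphi(t)}\le\mathrm{Lip}_x(\varphi)$ uniformly in $h$. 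Thus the integrand over the transition zones is uniformly bounded, the zones have length $2h\to0$, and the excess vanishes. This reduces everything to the elementary limits above and completes the proof.
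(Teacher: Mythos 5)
Your proposal is correct and follows essentially the same route as the paper: test with $\zeta_h\varphi$ for a piecewise-linear time cutoff, recover the boundary terms from $\zeta_h'$ via Lebesgue points, and localize the total-variation terms to $\Omega'$ using $\supp\varphi\Subset\Omega'\times(0,T)$ so that the common parts cancel. The only (harmless) variation is that you control the transition intervals by bounding the excess $\bigl|\Vert D(u+\zeta_h\varphi)(t)\Vert-\Vert Du(t)\Vert\bigr|\le\Vert D(\zeta_h\varphi)(t)\Vert$ uniformly, whereas the paper lets each transition-zone integral vanish separately by absolute continuity of the time integral.
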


\begin{proof}
Let $\varphi\in \textrm{Lip}(\Omega_{T})$ with $\textrm{supp }\varphi\Subset \Omega'\times (0,T)$. Let $\zeta_{h}$, $h>0$, be a cutoff function depending only on time, defined as
\begin{equation*}
\zeta_{h}(t)=
\begin{cases}0,&\quad 0\leq t<t_{1}-h,\\ 
\frac{1}{h}(t-t_{1}+h),&\quad t_{1}-h\leq t<t_{1},\\ 
1,&\quad t_{1}\leq t< t_{2},\\ 
-\frac{1}{h}(t-t_{2}-h),&\quad t_{2}\leq t<t_{2}+h,\\ 
0,&\quad t_{2}+h\leq t< T.
\end{cases} 
\end{equation*}
For small enough $h>0$, $\varphi_h=\varphi\zeta_{h}\in\textrm{Lip}(\Omega_T)$ with $\supp\varphi_{h}\Subset \Omega\times (0,T)$ and therefore admissible as a test function in the definition of variational solution. Thus 
 \begin{equation}\label{1}
 -\int_{0}^{T}\int_{\Omega}u(t)\frac{\partial\varphi_{h}}{\partial t}(t)\,\d\mu\,\d t+\int_{0}^{T}\Vert Du(t)\Vert(\Omega)\,\d t\leq \int_{0}^{T}\Vert D(u+\varphi_{h})(t)\Vert(\Omega)\,\d t.
 \end{equation}
Notice that 
\begin{equation*}
\frac{\partial\varphi_{h}}{\partial t}
=\frac{\partial\varphi}{\partial t}\zeta_{h}+\varphi\zeta_{h}'=\frac{\partial\varphi}{\partial t}\zeta_{h}+
\begin{cases}
0,&\quad 0\leq t<t_{1}-h,\\ 
\frac{1}{h}\left(\frac{\partial\varphi}{\partial t}(t-t_{1}+h)+\varphi\right),&\quad t_{1}-h\leq t<t_{1},\\ 
\frac{\partial\varphi}{\partial t},&\quad t_{1}\leq t< t_{2}, \\ 
-\frac{1}{h}\left(\frac{\partial\varphi}{\partial t}(t-t_{2}-h)+\varphi\right),&\quad t_{2}\leq t<t_{2}+h,\\ 
0,&\quad t_{2}+h\leq t< T.
\end{cases} 
\end{equation*}
For the first term on the left-hand side of \eqref{1}, we find
\begin{align*}
&-\int_{0}^{T}\int_{\Omega}u(t)\frac{\partial\varphi_{h}}{\partial t}(t)\,\d\mu\,\d t
\qquad=-\frac{1}{h}\int_{t_{1}-h}^{t_{1}}\int_{\Omega}u(t)\left(\frac{\partial\varphi}{\partial t}(t-t_{1}+h)+\varphi(t)\right)\,\d\mu\,\d t\\
 &\qquad\qquad-\int_{t_{1}}^{t_{2}}\int_{\Omega}u(t)\frac{\partial\varphi}{\partial t}(t)\,\d \mu\,\d t
 +\frac{1}{h}\int_{t_{2}}^{t_{2}+h}\int_{\Omega}u(t)\left(\frac{\partial\varphi}{\partial t}(t-t_{2}-h)+\varphi(t)\right)\,\d \mu\,\d t \\
 &\qquad=-\frac{1}{h}\int_{t_{1}-h}^{t_{1}}\int_{\Omega}u(t)\frac{\partial\varphi}{\partial t}(t-t_{1}+h)\,\d \mu\,\d t-\frac{1}{h}\int_{t_{1}-h}^{t_{1}}\int_{\Omega}u(t)\varphi(t)\,\d \mu\,\d t\\
  &\qquad\qquad-\int_{t_{1}}^{t_{2}}\int_{\Omega}u(t)\frac{\partial\varphi}{\partial t}(t)\,\d \mu\,\d t
  +\frac{1}{h}\int_{t_{2}}^{t_{2}+h}\int_{\Omega}u(t)\frac{\partial\varphi}{\partial t}(t-t_{2}-h)\,\d \mu\,\d t\\
  &\qquad\qquad+\frac{1}{h}\int_{t_{2}}^{t_{2}+h}\int_{\Omega}u(t)\varphi(t)\,\d \mu\,\d t.
\end{align*}
Since $\textrm{supp }\varphi\Subset \Omega'\times (0,T)$, we have
\begin{equation*}
-\int_{t_{1}}^{t_{2}}\int_{\Omega}u(t)\frac{\partial\varphi}{\partial t}(t)\,\d \mu\,\d t=-\int_{t_{1}}^{t_{2}}\int_{\Omega'}u(t)\frac{\partial\varphi}{\partial t}(t)\,\d \mu\,\d t.
\end{equation*}
By the dominated convergence theorem, we get
\[
-\frac{1}{h}\int_{t_{1}-h}^{t_{1}}\int_{\Omega}u(t)\frac{\partial\varphi}{\partial t}(t-t_{1}+h)\,\d \mu\,\d t
\xrightarrow{h\rightarrow0} 0,
\]
and
\[
\frac{1}{h}\int_{t_{2}}^{t_{2}+h}\int_{\Omega}u(t)\frac{\partial\varphi}{\partial t}(t-t_{2}-h)\,\d \mu\,\d t
\xrightarrow{h\rightarrow0} 0.
\]
on the other hand, by the Lebesgue differentiation theorem, we obtain
\[
-\frac{1}{h}\int_{t_{1}-h}^{t_{1}}\int_{\Omega}u(t)\varphi(t)\,\d \mu\,\d t\xrightarrow{h\rightarrow0} -\int_{\Omega'}u(t_{1})\varphi(t_{1})\,\d \mu
\]
and
\[
\frac{1}{h}\int_{t_{2}}^{t_{2}+h}\int_{\Omega}u(t)\varphi(t)\,\d \mu\,\d t\xrightarrow{h\rightarrow0} \int_{\Omega'}u(t_{2})\varphi(t_{2})\,\d \mu,
\]
for $\mathcal{L}^1$-almost every $t_1,t_2\in(0,T)$.
This implies that
\begin{equation}
-\int_{0}^{T}\int_{\Omega}u(t)\frac{\partial\varphi_{h}}{\partial t}(t)\,\d \mu\,\d t\xrightarrow{h\rightarrow0} -\int_{t_{1}}^{t_{2}}\int_{\Omega'}u(t)\frac{\partial\varphi}{\partial t}(t)\,\d \mu\,\d t+\left[\int_{\Omega'}u(t)\varphi(t)\,\d \mu\right]_{t=t_{1}}^{t_{2}},
\end{equation}
for $\mathcal{L}^1$-almost every $t_1,t_2\in(0,T)$.

For the second term on the left-hand side of \eqref{1}, we find
\begin{align*}
&\int_{0}^{T}\Vert Du(t)\Vert(\Omega)\,\d t
=\int_{0}^{t_{1}-h}\Vert Du(t)\Vert(\Omega)\,\d t+\int_{t_{1}-h}^{t_{1}}\Vert Du(t)\Vert(\Omega)\,\d t
+\int_{t_{1}}^{t_{2}}\Vert Du(t)\Vert(\Omega)\,\d t\\
&\qquad+\int_{t_{2}}^{t_{2}+h}\Vert Du(t)\Vert(\Omega)\,\d t+\int_{t_{2}+h}^{T}\Vert Du(t)\Vert(\Omega)\,\d t\\
&=\int_{0}^{t_{1}-h}\Vert Du(t)\Vert(\Omega)\,\d t+\int_{t_{1}-h}^{t_{1}}\Vert Du(t)\Vert(\Omega)\,\d t+\int_{t_{1}}^{t_{2}}\Vert Du(t)\Vert(\Omega\setminus \Omega')\,\d t\\
&\qquad+\int_{t_{1}}^{t_{2}}\Vert Du(t)\Vert(\Omega')\,\d t+\int_{t_{2}}^{t_{2}+h}\Vert Du(t)\Vert(\Omega)\,\d t+\int_{t_{2}+h}^{T}\Vert Du(t)\Vert(\Omega)\,\d t.
\end{align*}
Here
\[
\int_{t_{1}-h}^{t_{1}}\Vert Du(t)\Vert(\Omega)\,\d t\xrightarrow{h\rightarrow0}  0
\quad\text{and}\quad
\int_{t_{2}}^{t_{2}+h}\Vert Du(t)\Vert(\Omega)\,\d t\xrightarrow{h\rightarrow0}  0.
\]
Moreover,
\begin{align*}
&\int_{0}^{T}\Vert D(u+\varphi_{h})(t)\Vert(\Omega)\,\d t
=\int_{0}^{t_{1}-h}\Vert D(u+\varphi_{h})(t)\Vert(\Omega)\,\d t+\int_{t_{1}-h}^{t_{1}}\Vert D(u+\varphi_{h})(t)\Vert(\Omega)\,\d t\\
&\qquad\qquad+\int_{t_{1}}^{t_{2}}\Vert D(u+\varphi_{h})(t)\Vert(\Omega)\,\d t+\int_{t_{2}}^{t_{2}+h}\Vert D(u+\varphi_{h})(t)\Vert(\Omega)\,\d t\\
&\qquad\qquad+\int_{t_{2}+h}^{T}\Vert D(u+\varphi_{h})(t)\Vert(\Omega)\,\d t\\
&\qquad=\int_{0}^{t_{1}-h}\Vert Du(t)\Vert(\Omega)\,\d t
+\int_{t_{1}-h}^{t_{1}}\Vert D(u+\varphi_{h})(t)\Vert(\Omega)\,\d t
+\int_{t_{1}}^{t_{2}}\Vert D(u+\varphi_{h})(t)\Vert(\Omega)\,\d t\\
&\qquad\qquad+\int_{t_{2}}^{t_{2}+h}\Vert D(u+\varphi_{h})(t)\Vert(\Omega)\,\d t+\int_{t_{2}+h}^{T}\Vert Du(t)\Vert(\Omega)\,\d t,
\end{align*}
where
\[
\int_{t_{1}-h}^{t_{1}}\Vert D(u+\varphi_{h})(t)\Vert(\Omega)\,\d t\xrightarrow{h\rightarrow0}  0,
\quad
\int_{t_{2}}^{t_{2}+h}\Vert D(u+\varphi_{h})(t)\Vert(\Omega)\,\d t\xrightarrow{h\rightarrow0}  0,
\]
and
\begin{align*}
\int_{t_{1}}^{t_{2}}\Vert D(u+\varphi_{h})(t)\Vert(\Omega)\,\d t&\xrightarrow{h\rightarrow0}\int_{t_{1}}^{t_{2}}\Vert D(u+\varphi)(t)\Vert(\Omega)\,\d t \\
&=\int_{t_{1}}^{t_{2}}\Vert D(u+\varphi)(t)\Vert(\Omega\setminus\Omega')\,\d t +\int_{t_{1}}^{t_{2}}\Vert D(u+\varphi)(t)\Vert(\Omega')\,\d t \\
&=\int_{t_{1}}^{t_{2}}\Vert Du(t)\Vert(\Omega\setminus\Omega')\,\d t +\int_{t_{1}}^{t_{2}}\Vert D(u+\varphi)(t)\Vert(\Omega')\,\d t.
\end{align*}
Here we used the fact that $\textrm{supp }\varphi\Subset \Omega'\times (0,T)$.
Substituting these in \eqref{1}, we obtain
\begin{align*}
-&\int_{t_{1}}^{t_{2}}\int_{\Omega'}u(t)\frac{\partial\varphi}{\partial t}(t)\,\d \mu\,\d t
+\left[\int_{\Omega'}u(t)\varphi(t)\,\d \mu\right]_{t=t_{1}}^{t_{2}}+\int_{0}^{t_{1}-h}\Vert Du(t)\Vert(\Omega)\,\d t\\
&\qquad\qquad+\int_{t_{1}}^{t_{2}}\Vert Du(t)\Vert(\Omega')\,\d t+\int_{t_{2}+h}^{T}\Vert Du(t)\Vert(\Omega)\,\d t\\
&\qquad\leq \int_{0}^{t_{1}-h}\Vert Du(t)\Vert(\Omega)\,\d t +\int_{t_{1}}^{t_{2}}\Vert D(u+\varphi_{h})(t)\Vert(\Omega)\,\d t+\int_{t_{2}+h}^{T}\Vert Du(t)\Vert(\Omega)\,\d t.
\end{align*}
Eliminating the repeated elements gives
\begin{equation*}
\int_{t_{1}}^{t_{2}}\left(\int_{\Omega'}-u(t)\frac{\partial\varphi}{\partial t}(t)\,\d \mu+\Vert Du(t)\Vert(\Omega')\right)\,\d t\leq \int_{t_{1}}^{t_{2}}\Vert D(u+\varphi)(t)\Vert(\Omega')\,\d t-\left[\int_{\Omega'}u(t)\varphi(t)\,\d \mu\right]_{t=t_{1}}^{t_{2}}.
\end{equation*}

\end{proof}

\section{Parabolic De Giorgi class}\label{ParabolicDeGiorgiClass}
Next we define the class of functions for which we prove the regularity results.
For $(x_{0}, t_{0})\in X\times\mathbb{R}$ and $\rho,\theta>0$, we denote $Q^{-}_{\rho, \theta}(x_{0},t_{0})=B_{\rho}(x_{0})\times(t_{0}-\theta\rho,t_{0}]$.
The positive and negative parts of $u$ are denoted by $u_{\pm}=\max\lbrace\pm u,0\rbrace$, respectively.

\begin{definition}
A function $u\in L^{1}_{\loc}(0,T; BV_{\loc}(\Omega))$ belongs to the parabolic De Giorgi class $DG^{\pm}(\Omega_{T};\gamma)$, with $\gamma>0$, if
\begin{equation}\label{degiorgiclass}
\begin{split}
&\esssup_{t_{0}-\theta\rho\leq t\leq t_{0}}\int_{B_{\rho}(x_{0})}\varphi(t)(u(t)-k)_{\pm}^{2}\,\d \mu
+\int_{t_{0}-\theta\rho}^{t_{0}}\Vert D(\varphi(u-k)_{+})(t)\Vert(B_{\rho}(x_{0}))\,\d t\\\
&\qquad\leq\gamma\iint_{Q^{-}_{\rho, \theta}(x_{0},t_{0})}\left\vert\frac{\partial\varphi}{\partial t}(t)\right\vert(u(t)-k)_{\pm}^{2}\,\d \mu\,\d t
+\gamma\iint_{Q^{-}_{\rho, \theta}(x_{0},t_{0})}g_{\varphi}(t)(u(t)-k)_+\,\d \mu\,\d t\\
&\qquad\qquad-\left[\int_{B_{\rho}(x_{0})}\varphi(t)(u(t)-k)_{\pm}^2\,\d \mu\right]_{t=t_{0}-\theta\rho}^{t_0},
\end{split}
\end{equation}
for every $Q^{-}_{\rho, \theta}(x_{0},t_{0})\Subset \Omega_{T}$,  $k\in\mathbb{R}$ and $\varphi\in\textrm{Lip}(\Omega_T)$ 
with $\supp\varphi\Subset B_{\rho}(x_{0})\times(0,T)$ and $0\leq\varphi\leq 1$.

The parabolic De Giorgi class $DG(\Omega_{T};\gamma)$ is defined as 
\[
DG(\Omega_{T};\gamma)=DG^{+}(\Omega_{T};\gamma)\cap DG^{-}(\Omega_{T};\gamma).
\]
\end{definition}

The proof of the necessary and sufficient conditions for continuity of a variational solution to the total variation flow, Theorem \ref{NecSuf}, will only use the local integral inequalities in \eqref{degiorgiclass}.
We show that a variational solution to the total variation flow belongs to the parabolic De Giorgi class. 

\begin{prop}\label{belonging} Let $u$ be variational solution to the total variation flow in $\Omega_{T}$.
Then $u\in DG(\Omega_{T};8)$.
\end{prop}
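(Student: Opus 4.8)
The plan is to test the variational inequality from Proposition \ref{lateralbdry} with a carefully chosen test function built from $(u-k)_\pm$, multiplied by the cutoff $\varphi^2$ (or a similar power), and then to extract the two terms on the left-hand side of \eqref{degiorgiclass}. The main technical difficulty, flagged already in the introduction, is that the time regularity of a variational solution $u\in L^1_{\loc}(0,T;BV_{\loc}(\Omega))$ is not a priori good enough to use $\varphi^2(u-k)_\pm$ directly as a test function in \eqref{parabolicminimizer}; the term $-u\,\partial_t\varphi$ needs $\varphi$ to be Lipschitz in time, but the test function involves $u$ itself. So first I would replace $u$ by its time mollification $u_\varepsilon$ wherever it appears inside the test function, derive the estimate for the mollified object, and then pass to the limit $\varepsilon\to0$ using Lemma \ref{vbthm}.

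Concretely, for the $DG^+$ estimate I would argue as follows. Fix $Q^-_{\rho,\theta}(x_0,t_0)\Subset\Omega_T$, $k\in\R$, and an admissible $\varphi$. Pick a piecewise-linear-in-time cutoff $\zeta=\zeta_\delta$ that is $1$ on $[t_0-\theta\rho+\delta, t_0-\delta']$ and vanishes near the endpoints, so that $\psi:=-\zeta\,\varphi^2(u_\varepsilon-k)_+$ is an admissible test function (Lipschitz with compact support in space-time, after noting $(u_\varepsilon-k)_+$ inherits enough regularity from the mollification). Feeding $\psi$ into the inequality of Proposition \ref{lateralbdry} over $\Omega'=B_\rho(x_0)$, the parabolic term $-\iint u\,\partial_t\psi$ produces, after expanding $\partial_t\big(\zeta\varphi^2(u_\varepsilon-k)_+\big)$ and using the elementary identity $u\,\partial_t(u_\varepsilon-k)_+ \approx \tfrac12\partial_t\big((u_\varepsilon-k)_+^2\big)$ up to a mollification error that vanishes as $\varepsilon\to0$, the $\esssup$-in-time term $\esssup_t\int_{B_\rho}\varphi^2(u-k)_+^2\,\d\mu$ together with the $\iint|\partial_t\varphi|(u-k)_+^2$ term (here the factor two accounts for $\gamma=8$ versus the naive constant). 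The BV-terms are handled by the calculus rules for upper gradients recalled in Section \ref{Preliminaries}: on the left we keep $\int\|D(\varphi(u-k)_+)(t)\|(B_\rho)\,\d t$, while on the right $\|D(u+\psi)(t)\|$ is estimated, on the set $\{u>k\}\cap\supp\varphi$, using $w=(1-\zeta\varphi^2)u + \zeta\varphi^2(\min\{u,\,\cdot\}) $-type decompositions so that the $\|Du\|$ contribution there cancels against the full $\|Du(t)\|(B_\rho)$ on the left, leaving only the lower-order gradient term $\iint g_\varphi (u-k)_+$.

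The step I expect to be the main obstacle is the cancellation and bookkeeping of the BV/total-variation terms: unlike in the Euclidean setting one cannot simply write $D(u+\psi)=Du+D\psi$ and split linearly, so I would instead work with the Leibniz-type upper gradient rule $g_{(1-\eta)u+\eta v}\le (1-\eta)g_u+\eta g_v+|v-u|g_\eta$ from \cite[Theorem 2.18]{bjorn} applied to a regularized (Newtonian) approximating sequence, take liminf of the integrated upper gradients to recover total variations via \cite[Theorem 4.3]{Lahti2017}, and only then let the regularization parameter and $\varepsilon$ go to zero. A secondary point requiring care is that the boundary terms $\big[\int_{B_\rho}\varphi^2(u-k)_+^2\,\d\mu\big]_{t_0-\theta\rho}^{t_0}$ coming from Proposition \ref{lateralbdry} are exactly the boundary term displayed in \eqref{degiorgiclass}, so they should be carried along rather than discarded; choosing $\zeta_\delta$ to degenerate only at the very endpoints and passing $\delta\to0$ via the Lebesgue differentiation theorem (as in the proof of Proposition \ref{lateralbdry}) recovers them with the correct sign. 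The $DG^-$ estimate follows by the same argument applied to $-u$ with level $-k$.
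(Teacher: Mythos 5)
Your overall strategy (time mollification, a test function of the form $-\zeta\varphi^{\,\cdot}(u_\varepsilon-k)_+$, Newtonian approximating sequences together with the upper-gradient Leibniz rule to recover and absorb the total-variation terms, keeping the boundary terms, and a symmetric argument for $DG^-$) is the same as the paper's. However, there is a genuine gap at exactly the step you wave through as ``an elementary identity up to a mollification error that vanishes as $\varepsilon\to0$''. If you test the inequality for $u$ itself with $\psi=-\zeta\varphi^2(u_\varepsilon-k)_+$, the parabolic term produces
\[
\iint u\,\partial_t\bigl((u_\varepsilon-k)_+\bigr)\zeta\varphi^2\,\d\mu\,\d t
=\iint u_\varepsilon\,\partial_t\bigl((u_\varepsilon-k)_+\bigr)\zeta\varphi^2\,\d\mu\,\d t
+\iint (u-u_\varepsilon)\,\partial_t\bigl((u_\varepsilon-k)_+\bigr)\zeta\varphi^2\,\d\mu\,\d t,
\]
and only the first term is an exact time derivative of $\tfrac12(u_\varepsilon-k)_+^2$ (up to the harmless $k$-bookkeeping). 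The error term is a product of $u-u_\varepsilon$, which tends to $0$ in $L^1$ \emph{without any rate}, and $\partial_t(u_\varepsilon-k)_+$, which is of size $\varepsilon^{-1}$; for a solution that is merely $L^1_{\loc}(0,T;BV_{\loc})$ in time there is no reason this product integrates to $o(1)$. The antisymmetry cancellation that saves the untruncated quadratic term $\iint(u-u_\varepsilon)\partial_t u_\varepsilon$ for a symmetric kernel is destroyed by the truncation $(\cdot-k)_+$ and the cutoffs, and you have no sign information of the kind available for the exponential mollification. This is precisely the ``technical difficulty'' the introduction flags, and the paper resolves it differently: it writes the variational inequality for the translated solutions $v(t)=u(t-s)$, multiplies by the mollifier kernel $\eta_\varepsilon(s)$ and integrates in $s$, so that by Fubini the linear term becomes \emph{exactly} $-\iint u_\varepsilon\,\partial_t\phi$, and after integration by parts the time derivative falls on $u_\varepsilon$ paired with $(u_\varepsilon-k)_+$, giving the exact identity with no error term. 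The price — an averaged gradient term $\int\eta_\varepsilon(s)\iint g_{v_i-u_i}\,\d\mu\,\d t\,\d s$ — is then controlled by the uniform-in-$t$ translation estimate $g_{u(\cdot,t-s)-u(\cdot,t)}\to0$ of Lemma \ref{vbthm}, which is the very reason that statement is included in the lemma. Without this (or an equivalent device such as Steklov/exponential averaging with its sign property), your derivation of the $\esssup$ term does not close.

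A secondary point: $\psi=-\zeta\varphi^2(u_\varepsilon-k)_+$ is not admissible in Definition \ref{definitionsolution} or Proposition \ref{lateralbdry}, which require $\psi\in\mathrm{Lip}(\Omega_T)$; the time mollification gives smoothness in $t$ but no spatial Lipschitz regularity, since $u(\cdot,t)$ is only $BV$. The paper sidesteps this by first deriving the intermediate inequality for generic Lipschitz $\phi$, inserting the Newtonian approximations $u_i$ (so the truncation is built from $(u_i)_\varepsilon\in N^{1,1}$, handled through the upper-gradient calculus and the coarea-type argument $\chi_{A_\varepsilon(t)}\to\chi_{A(t)}$ for a.e.\ level $k$), and only afterwards letting $i\to\infty$ to recover the total variations. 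If you keep your route you would at least need an additional approximation argument to extend the class of admissible test functions, and you should also note that with $\varphi^2$ in place of $\varphi$ you must use $g_{\varphi^2}\le2\varphi g_\varphi\le 2g_\varphi$ to land back in the form \eqref{degiorgiclass}, at the cost of a worse constant than $\gamma=8$ unless you track it carefully.
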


\begin{proof}
Let $\phi\in\textrm{Lip}(\Omega_T)$ with $\supp\phi \Subset \Omega_{T}$.
There exists $h_{0}>0$ such that for every $0<h<h_{0}$, we have $\phi_{h}\in\textrm{Lip}(\Omega_T)$ with $\supp\phi_{h} \Subset \Omega_{T}$ and thus we may apply it as test function in \eqref{parabolicminimizer}. 
Here $\phi_{h}$ denotes the time mollification of $\phi$.
For a small enough $s$ the translated function $v(t)=u(t-s)$ fulfills \eqref{parabolicminimizer}. For $0<t_2<t_1<T$, to be specified later, Proposition \ref{lateralbdry} implies
\[
\begin{split}
&-\int_{t_2}^{t_1}\int_{B_{\rho}(x_{0})}v(t)\frac{\partial\phi_{h}}{\partial t}(t)\,\d \mu\,\d t
+\int_{t_2}^{t_1}\Vert Dv(t)\Vert(B_{\rho}(x_{0}))\,\d t\\
&\qquad\leq\int_{t_2}^{t_1}\Vert D(v+\phi_{h})(t)\Vert(B_{\rho}(x_{0}))\,\d t
-\left[\int_{B_{\rho}(x_0)}v(t)\phi_{h}(t)\,\d \mu\right]_{t=t_2}^{t_1}.
\end{split}
\]
Let $(u_i)_{i\in\N}$ be a minimizing sequence with $u_{i}\in L^1_{\loc}(0,T;N_{\loc}^{1,1}(\Omega))$ for every $i\in\N$, $u_{i}\rightarrow u$ in $L^1_{\loc}(0,T;N_{\loc}^{1,1}(\Omega))$ as $i\to\infty$
and
\[
\begin{split}
\int_{t_2}^{t_1}\Vert Dv(t)\Vert(B_{\rho}(x_{0}))\,\d t
&=\int_{t_2}^{t_1}\Vert Du(t-s)\Vert(B_{\rho}(x_{0}))\,\d t\\
&=\lim_{i\rightarrow\infty}\int_{t_2}^{t_1}\int_{B_{\rho}(x_{0})} g_{u_{i}}(t-s)\,\d\mu\,\d t\\
&=\lim_{i\rightarrow\infty}\int_{t_2}^{t_1}\int_{B_{\rho}(x_{0})} g_{v_{i}}(t)\,\d\mu\,\d t,
\end{split}
\]
where $v_i(t)=u_i(t-s)$ for every $i\in\N$.
Let $\epsilon>0$. There exists $i_\epsilon\in\N$ such that, for every $i\ge i_\epsilon$, we have
\[
\int_{t_2}^{t_1}\int_{B_{\rho}(x_{0})} g_{v_{i}}(t)\,\d\mu\,\d t
\le\int_{t_2}^{t_1}\Vert Dv(t)\Vert(B_{\rho}(x_{0}))\,\d t
+\frac{\epsilon}2,
\]
and 
\[
\int_{t_2}^{t_1}\Vert D(v+\phi_{h})(t))\Vert(B_{\rho}(x_{0}))\,\d t
\le\int_{t_2}^{t_1}\int_{B_{\rho}(x_{0})}g_{v_{i}+\phi_{h}}(t)\,\d\mu\,\d t
+\frac{\epsilon}2.
\]
This implies
\begin{align*}
-&\int_{t_2}^{t_1}\int_{B_{\rho}(x_{0})}v(t)\frac{\partial\phi_{h}}{\partial t}(t)\,\d \mu\,\d t
+\int_{t_2}^{t_1}\int_{B_{\rho}(x_{0})} g_{v_{i}}(t)\,\d\mu\,\d t\\
&\qquad\leq\int_{t_2}^{t_1}\int_{B_{\rho}(x_{0})}g_{v_{i}+\phi_{h}}(t)\,\d\mu\,\d t -\left[\int_{B_{\rho}(x_0)}v(t)\phi_{h}(t)\,\d \mu\right]_{t=t_2}^{t_1}
+\epsilon\\
&\qquad\leq\int_{t_2}^{t_1}\int_{B_{\rho}(x_{0})}g_{v_{i}+\phi}(t)\,\d\mu\,\d t
+\int_{t_2}^{t_1}\int_{B_{\rho}(x_{0})}g_{\phi_{h}-\phi}(t)\,\d\mu\,\d t
-\left[\int_{B_{\rho}(x_0)}v(t)\phi_{h}(t)\,\d \mu\right]_{t=t_2}^{t_1}+\epsilon,
\end{align*}
for every $i\ge i_\epsilon$.

Let $i\ge i_\epsilon$.
We multiply both sides of the inequality above by a standard mollifier $\eta_\varepsilon=\eta_\varepsilon(s)$ with support $[-\varepsilon,\varepsilon]$ for small enough $\varepsilon>0$. 
By integrating the resulting expression in the variable $s$ we obtain
\begin{align*}
-&\int_{-\varepsilon}^{\varepsilon}\int_{t_2}^{t_1}\int_{B_{\rho}(x_{0})}v(t)\eta_{\varepsilon}(s)\frac{\partial\phi_{h}}{\partial t}(t)\,\d \mu\,\d t\,\d s
+\int_{-\varepsilon}^{\varepsilon}\int_{t_2}^{t_1}\int_{B_{\rho}(x_{0})} g_{v_{i}}(t)\eta_{\varepsilon}(s)\,\d\mu\,\d t\,\d s\\
&\qquad\leq\int_{-\varepsilon}^{\varepsilon}\int_{t_2}^{t_1}\int_{B_{\rho}(x_{0})}g_{v_{i}+\phi}(t)\eta_{\varepsilon}(s)\,\d\mu\,\d t\,\d s
+\int_{-\varepsilon}^{\varepsilon}\int_{t_2}^{t_1}\int_{B_{\rho}(x_{0})}g_{\phi_{h}-\phi}(t)\eta_{\varepsilon}(s)\,\d\mu\,\d t\,\d s\\
&\qquad\qquad-\left[\int_{-\varepsilon}^{\varepsilon}\int_{B_{\rho}(x_0)}v(t)\eta_\varepsilon(s)\phi_{h}(t)\,\d \mu\,\d s\right]_{t=t_2}^{t_1}
+\epsilon.
\end{align*}
Applying integration by parts and Fubini's theorem, we have
\[
\begin{split}
&\int_{t_2}^{t_1}\int_{B_{\rho}(x_{0})}\frac{\partial u_{\varepsilon}}{\partial t}(t)\phi_{h}(t)\,\d \mu\,\d t -\left[\int_{B_{\rho}(x_0)}u_\varepsilon(t)\phi_{h}(t)\,\d \mu\right]_{t=t_2}^{t_1}
+\int_{t_2}^{t_1}\int_{B_{\rho}(x_{0})}(g_{u_{i}})_\varepsilon(t)\,\d\mu\,\d t \\
&\qquad=-\int_{t_2}^{t_1}\int_{B_{\rho}(x_{0})}u_{\varepsilon}(t)\frac{\partial\phi_{h}}{\partial t}(t)\,\d \mu\,\d t
+\int_{t_2}^{t_1}\int_{B_{\rho}(x_{0})}(g_{u_{i}})_\varepsilon(t)\,\d\mu\,\d t \\
&\qquad\le\int_{-\varepsilon}^{\varepsilon}\int_{t_2}^{t_1}\int_{B_{\rho}(x_{0})}g_{v_i+\phi}(t)\eta_{\varepsilon}(s)\,\d\mu\,\d t\,\d s
+\int_{t_2}^{t_1}\int_{B_{\rho}(x_{0})}g_{\phi_{h}-\phi}(t)\,\d\mu\,\d t\\
&\qquad\qquad-\left[\int_{B_{\rho}(x_0)}u_\varepsilon(t)\phi_{h}(t)\,\d \mu\right]_{t=t_2}^{t_1}
+\epsilon.
\end{split}
\]
Therefore,
\[
\begin{split}
&\int_{t_2}^{t_1}\int_{B_{\rho}(x_{0})}\frac{\partial u_{\varepsilon}}{\partial t}(t)\phi_{h}(t)\,\d \mu\,\d t 
+\int_{t_2}^{t_1}\int_{B_{\rho}(x_{0})}(g_{u_{i}})_\varepsilon(t)\,\d\mu\,\d t \\
&\qquad\le\int_{-\varepsilon}^{\varepsilon}\int_{t_2}^{t_1}\int_{B_{\rho}(x_{0})}g_{v_i+\phi}(t)\eta_{\varepsilon}(s)\,\d\mu\,\d t\,\d s
+\int_{t_2}^{t_1}\int_{B_{\rho}(x_{0})}g_{\phi_{h}-\phi}(t)\,\d\mu\,\d t
+\epsilon.
\end{split}
\]
Lemma \ref{vbthm} implies that the last term on the right-hand side converges to zero as $h\rightarrow 0$.
By passing to the limit $h\rightarrow 0$, we have
\begin{equation}\label{4dg}
\begin{split}
&\int_{t_2}^{t_1}\int_{B_{\rho}(x_{0})}\frac{\partial u_{\varepsilon}}{\partial t}(t)\phi(t)\,\d \mu\,\d t
+\int_{t_2}^{t_1}\int_{B_{\rho}(x_{0})}(g_{u_{i}})_\varepsilon(t)\,\d\mu\,\d t \\
&\qquad\leq\int_{-\varepsilon}^{\varepsilon}\int_{t_2}^{t_1}\int_{B_{\rho}(x_{0})}g_{v_i+\phi}(t)\eta_{\varepsilon}(s)\,\d\mu\,\d t\,\d s
+\epsilon.
\end{split}
\end{equation}

Let $\varphi\in\textrm{Lip}(\Omega_T)$ with $\supp\varphi\Subset B_{\rho}(x_{0})\times (0,T)$ and $0\leq\varphi\leq 1$. Let $\zeta_h$ be a cutoff function depending only on time defined as
\[
\zeta_h(t)
=\begin{cases}
\frac1h(t-t_0+\theta\rho+h),&\quad t_0-\theta\rho-h\le t<t_0-\theta\rho,\\
1,&\quad t_0-\theta\rho\le t\le t_0,\\
-\frac1h(t-t_0-h),&\quad t_0<t\le t_0+h,\\
0,&\quad\text{otherwise}.
\end{cases}
\]

We apply the test function $\phi=-\varphi\zeta_h((u_i)_{\varepsilon}-k)_{+}$ in \eqref{4dg}.
For the right-hand side of \eqref{4dg} we have
\begin{equation}\label{rhs}
\begin{split}
&\int_{-\varepsilon}^{\varepsilon}\int_{t_2}^{t_1}\int_{B_{\rho}(x_{0})}g_{v_i-\varphi\zeta_h((u_i)_{\varepsilon}-k)_{+}}(t)\eta_{\varepsilon}(s)\,\d\mu\,\d t\,\d s\\
&\qquad\leq\int_{-\varepsilon}^{\varepsilon}\int_{t_2}^{t_1}\int_{B_{\rho}(x_{0})}g_{v_i-u_i}(t)\eta_{\varepsilon}(s)\,\d\mu\,\d t\,\d s
+\int_{-\varepsilon}^{\varepsilon}\int_{t_2}^{t_1}\int_{B_{\rho}(x_{0})}g_{u_i-(u_i)_\varepsilon}(t)\eta_{\varepsilon}(s)\,\d\mu\,\d t\,\d s\\
&\qquad\qquad+\int_{-\varepsilon}^{\varepsilon}\int_{t_2}^{t_1}\int_{B_{\rho}(x_{0})}g_{(u_i)_\varepsilon-\varphi\zeta_h((u_i)_{\varepsilon}-k)_{+}}(t)\eta_{\varepsilon}(s)\,\d\mu\,\d t\,\d s\\
&\qquad= \int_{-\varepsilon}^{\varepsilon}\left(\int_{t_2}^{t_1}\int_{B_{\rho}(x_{0})}g_{v_i-u_i}(t)\,\d\mu\,\d t\right)\eta_{\varepsilon}(s)\,\d s
+\int_{t_2}^{t_1}\int_{B_{\rho}(x_{0})}g_{u_i-(u_i)_\varepsilon}(t)\,\d\mu\,\d t\\
&\qquad\qquad+\int_{t_2}^{t_1}\int_{B_{\rho}(x_{0})}g_{(u_i)_\varepsilon-\varphi\zeta_h((u_i)_{\varepsilon}-k)_{+}}(t)\,\d\mu\,\d t.
\end{split}
\end{equation}
Denote
\[
A_\varepsilon(t)
=\{x\in B_{\rho}(x_{0}):(u_i)_\varepsilon(x,t)>k\}.
\]
For the last integral in \eqref{rhs}, we have
\begin{align*}
&\int_{t_2}^{t_1}\int_{B_{\rho}(x_{0})}g_{(u_i)_\varepsilon-\varphi\zeta_h((u_i)_{\varepsilon}-k)_{+}}(t)\,\d\mu\,\d t\\
&\qquad=\int_{t_2}^{t_1}\int_{A_\varepsilon(t)}g_{(u_i)_\varepsilon-\varphi\zeta_h((u_i)_{\varepsilon}-k)}(t)\,\d\mu\,\d t
+\int_{t_2}^{t_1}\int_{B_{\rho}(x_{0})\setminus A_\varepsilon(t)}g_{(u_i)_\varepsilon}(t)\,\d\mu\,\d t.
\end{align*}
The Leibniz rule for upper gradients implies
\begin{align*}
&\int_{t_2}^{t_1}\int_{A_\varepsilon(t)}g_{(u_i)_\varepsilon-\varphi\zeta_h((u_i)_{\varepsilon}-k)}(t)\,\d\mu\,\d t
=\int_{t_2}^{t_1}\int_{A_\varepsilon(t)}g_{(1-\varphi\zeta_h)((u_i)_{\varepsilon}-k)}(t)\,\d\mu\,\d t\\
&\qquad\leq \int_{t_2}^{t_1}\int_{A_\varepsilon(t)}(1-\varphi(t)\zeta_h(t))g_{(u_i)_{\varepsilon}-k}(t)\,\d\mu\,\d t
+\int_{t_2}^{t_1}\int_{A_\varepsilon(t)}((u_i)_{\varepsilon}(t)-k)g_{\varphi\zeta_h}(t)\,\d \mu\,\d t\\
&\qquad=\int_{t_2}^{t_1}\int_{B_{\rho}(x_{0})}(1-\varphi(t)\zeta_h(t))g_{((u_i)_{\varepsilon}-k)_+}(t)\,\d\mu\,\d t
+\int_{t_2}^{t_1}\int_{B_{\rho}(x_{0})}((u_i)_{\varepsilon}(t)-k)_+g_{\varphi\zeta_h}(t)\,\d \mu\,\d t.
\end{align*}
For the integral on the right-hand side of \eqref{4dg}, we have
\[
\begin{split}
&\int_{-\varepsilon}^{\varepsilon}\int_{t_2}^{t_1}\int_{B_{\rho}(x_{0})}g_{v_i-\varphi\zeta_h((u_i)_{\varepsilon}-k)_{+}}(t)\eta_{\varepsilon}(s)\,\d\mu\,\d t\,\d s\\
&\le\int_{-\varepsilon}^{\varepsilon}\left(\int_{t_2}^{t_1}\int_{B_{\rho}(x_{0})}g_{v_i-u_i}(t)\,\d\mu\,\d t\right)\eta_{\varepsilon}(s)\,\d s
+\int_{t_2}^{t_1}\int_{B_{\rho}(x_{0})}g_{u_i-(u_i)_\varepsilon}(t)\,\d\mu\,\d t\\
&\qquad+\int_{t_2}^{t_1}\int_{B_{\rho}(x_{0})}(1-\varphi(t)\zeta_h(t))g_{((u_i)_{\varepsilon}-k)_+}(t)\,\d\mu\,\d t
+\int_{t_2}^{t_1}\int_{B_{\rho}(x_{0})}((u_i)_{\varepsilon}(t)-k)_+g_{\varphi\zeta_h}(t)\,\d \mu\,\d t\\
&\qquad+\int_{t_2}^{t_1}\int_{B_{\rho}(x_{0})\setminus A_\varepsilon(t)}g_{(u_i)_\varepsilon}(t)\,\d\mu\,\d t.
\end{split}
\]

By letting $h\to0$, we obtain
\[
\begin{split}
&\int_{-\varepsilon}^{\varepsilon}\int_{t_2}^{t_1}\int_{B_{\rho}(x_{0})}g_{v_i-\varphi\chi_{[t_0-\theta\rho,t_0]}((u_i)_{\varepsilon}-k)_{+}}(t)\eta_{\varepsilon}(s)\,\d\mu\,\d t\,\d s\\
&\qquad\le\int_{-\varepsilon}^{\varepsilon}\left(\int_{t_2}^{t_1}\int_{B_{\rho}(x_{0})}g_{v_i-u_i}(t)\,\d\mu\,\d t\right)\eta_{\varepsilon}(s)\,\d s
+\int_{t_2}^{t_1}\int_{B_{\rho}(x_{0})}g_{u_i-(u_i)_\varepsilon}(t)\,\d\mu\,\d t\\
&\qquad\qquad+\int_{t_2}^{t_1}\int_{B_{\rho}(x_{0})}(1-\varphi(t)\chi_{[t_0-\theta\rho,t_0]}(t))g_{((u_i)_{\varepsilon}-k)_+}(t)\,\d\mu\,\d t\\
&\qquad\qquad+\int_{t_2}^{t_1}\int_{B_{\rho}(x_{0})}\chi_{[t_0-\theta\rho,t_0]}(t)((u_i)_{\varepsilon}(t)-k)_+g_{\varphi}(t)\,\d \mu\,\d t
+\int_{t_2}^{t_1}\int_{B_{\rho}(x_{0})\setminus A_\varepsilon(t)}g_{(u_i)_\varepsilon}(t)\,\d\mu\,\d t.
\end{split}
\]
Lemma \ref{vbthm} implies that
\[
\lim_{\varepsilon\to0}\int_{-\varepsilon}^{\varepsilon}\left(\int_{t_2}^{t_1}\int_{B_{\rho}(x_{0})}g_{v_i-u_i}(t)\,\d\mu\,\d t\right)\eta_{\varepsilon}(s)\,\d s=0,
\]
and
\[
\lim_{\varepsilon\to0}\int_{t_2}^{t_1}\int_{B_{\rho}(x_{0})}g_{u_i-(u_i)_\varepsilon}(t)\,\d\mu\,\d t=0.
\]
By letting $\varepsilon\to0$, we have
\begin{align*}
\limsup_{\varepsilon\rightarrow0}
&\int_{-\varepsilon}^{\varepsilon}\int_{t_2}^{t_1}\int_{B_{\rho}(x_{0})}g_{v_i-\varphi\chi_{[t_0-\theta\rho,t_0]}((u_i)_{\varepsilon}-k)_{+}}(t)\eta_{\varepsilon}(s)\,\d\mu\,\d t\,\d s\\
&\leq\int_{t_2}^{t_1}\int_{B_{\rho}(x_{0})}(1-\varphi(t)\chi_{[t_0-\theta\rho,t_0]}(t))g_{(u_i-k)_+}(t)\,\d\mu\,\d t\\
&\qquad+\int_{t_2}^{t_1}\int_{B_{\rho}(x_{0})}\chi_{[t_0-\theta\rho,t_0]}(t)(u_i(t)-k)_+g_{\varphi}(t)\,\d \mu\,\d t
+\int_{t_2}^{t_1}\int_{B_{\rho}(x_{0})\setminus A(t)}g_{u_i}(t)\,\d\mu\,\d t,
\end{align*}
where 
\[
A(t)
=\{x\in B_{\rho}(x_{0}):u_i(x,t)>k\}.
\]

Here we used the following observation 
$$
\int_{-\infty}^{\infty}\vert\chi_{A_{\varepsilon}(t)}(x)-\chi_{A(t)}(x)\vert\,\d k
=\int_{\min\lbrace (u_i)_{\varepsilon}(x,t), u_i(x,t)\rbrace}^{\max\lbrace (u_i)_{\varepsilon}(x,t), u_i(x,t)\rbrace}1\,\d k
=\vert (u_i)_{\varepsilon}(x,t)-u_i(x,t)\vert.
$$
By Fubini's Theorem, we get
$$
\int_{-\infty}^{\infty}\int_{0}^{T}\int_{B_{\rho}(x_0)}\vert\chi_{A_{\varepsilon}(t)}-\chi_{A(t)}\vert\,\d \mu\,\d t\,\d k
=\int_{0}^{T}\int_{B_{\rho}(x_0)}\vert (u_i)_{\varepsilon}(t)-u_i(t)\vert\,\d \mu\,\d t.
$$
By Lemma \ref{vbthm}, $(u_i)_{\varepsilon}\rightarrow u_i$ in $L^1_{\textrm{loc}}(0,T,N^{1,1}_{\textrm{loc}}(\Omega))$, in particular we have $(u_i)_{\varepsilon}\rightarrow u_i$ in $L^1_{\textrm{loc}}(0,T,L^{1}_{\textrm{loc}}(\Omega))$ as $\varepsilon\rightarrow 0$. Therefore, 
$$
0=\lim_{\varepsilon\rightarrow 0}\int_{0}^{T}\int_{B_{\rho}(x_0)}\vert (u_i)_{\varepsilon}(t)-u_i(t)\vert\,\d \mu\,\d t
= \lim_{\varepsilon\rightarrow 0}\int_{-\infty}^{\infty}\int_{0}^{T}\int_{B_{\rho}(x_0)}\vert\chi_{A_{\varepsilon}(t)}-\chi_{A(t)}\vert\,\d \mu\,\d t\,\d k.
$$

By Fatou's lemma, we have
\begin{align*}
    0&\leq \int_{-\infty}^{\infty}\left(\lim_{\varepsilon\rightarrow 0}\int_{0}^{T}\int_{B_{\rho}(x_0)}\vert\chi_{A_{\varepsilon}(t)}-\chi_{A(t)}\vert\,\d \mu\,\d t\right)\,\d k\\
    &\leq \lim_{\varepsilon\rightarrow 0}\int_{-\infty}^{\infty}\int_{0}^{T}\int_{B_{\rho}(x_0)}\vert\chi_{A_{\varepsilon}(t)}-\chi_{A(t)}\vert\,\d \mu\,\d t\,\d k=0.
\end{align*}

This implies that
\begin{equation}\label{star}
\lim_{\varepsilon\rightarrow 0}\int_{0}^{T}\int_{B_{\rho}(x_0)}\vert\chi_{A_{\varepsilon}(t)}-\chi_{A(t)}\vert\,\d \mu\,\d t=0,
\end{equation}
for $\mathcal{L}^{1}$-almost every $k$.
Let $k\in\mathbb{R}$ be such that \eqref{star} holds. 
Applying Fatou's lemma one more time, we get
\[
 0\leq \int_{0}^{T}\left(\lim_{\varepsilon\rightarrow 0}\int_{B_{\rho}(x_0)}\vert \chi_{A_{\varepsilon}(t)}-\chi_{A(t)}\vert\,\d \mu\right)\,\d t
 \leq \lim_{\varepsilon\rightarrow 0}\int_{0}^{T}\int_{B_{\rho}(x_0)}\vert \chi_{A_{\varepsilon}(t)}-\chi_{A(t)}\vert\,\d \mu\,\d t=0.
\]

It follows that
\[
\lim_{\varepsilon\rightarrow 0}\int_{B_{\rho}(x_0)}\vert \chi_{A_{\varepsilon}(t)}-\chi_{A(t)}\vert\,\d \mu=0,
\]
for $\mathcal{L}^{1}$-almost every $t\in (0,T)$.
Therefore, we conclude $\chi_{A_{\varepsilon}(t)}\rightarrow \chi_{A(t)}$ in $L^{1}(B_{\rho}(x_{0}))$ as $\varepsilon\to0$, for $\mathcal{L}^{1}$-almost every $t\in (0,T)$ and for $\mathcal{L}^{1}$-almost every $k\in\mathbb{R}$.

For the first term on the left-hand side of \eqref{4dg} we find
\begin{align*}
\int_{t_2}^{t_1}\int_{B_{\rho}(x_{0})}\frac{\partial u_{\varepsilon}}{\partial t}(t)\phi(t)\,\d \mu\,\d t
&=-\int_{t_2}^{t_1}\int_{B_{\rho}(x_{0})}\frac{\partial u_{\varepsilon}}{\partial t}(t)(u_{\varepsilon}(t)-k)_{+}\varphi(t)\zeta_h(t)\,\d \mu\,\d t\\
&\xrightarrow{h\rightarrow 0}-\int_{t_2}^{t_1}\int_{B_{\rho}(x_{0})}\frac{\partial u_{\varepsilon}}{\partial t}(t)(u_{\varepsilon}(t)-k)_{+}\varphi(t)\chi_{[t_0-\theta\rho,t_0]}(t)\,\d \mu\,\d t,
\end{align*}
where
\begin{align*}
&-\int_{t_2}^{t_1}\int_{B_{\rho}(x_{0})}\frac{\partial u_{\varepsilon}}{\partial t}(t)(u_{\varepsilon}(t)-k)_{+}\varphi(t)\chi_{[t_0-\theta\rho,t_0]}(t)\,\d \mu\,\d t\\
&\qquad=-\int_{t_{2}}^{t_{1}}\int_{B_{\rho}(x_{0})}\frac{\partial u_{\varepsilon}}{\partial t}(t)(u_{\varepsilon}(t)-k)_{+}\varphi_{1}(t)\,\d \mu\,\d t\\
&\qquad=\frac{1}{2}\int_{t_2}^{t_1}\int_{B_{\rho}(x_{0})}\frac{\partial}{\partial t}\left((u_{\varepsilon}-k)_{+}^{2}\varphi_{1}\right)(t)\,\d \mu\,\d t
-\frac{1}{2}\int_{t_2}^{t_1}\int_{B_{\rho}(x_{0})}\frac{\partial\varphi_{1}}{\partial t}(t)(u_{\varepsilon}(t)-k)_{+}^{2}\,\d \mu\,\d t\\
&\qquad=\frac{1}{2}\left[\int_{B_{\rho}(x_{0})}(u_{\varepsilon}(t)-k)_{+}^{2}\varphi(t)\chi_{[t_0-\theta\rho,t_0]}(t)\,\d \mu\right]_{t=t_2}^{t_1}\\
&\qquad\qquad-\frac{1}{2}\int_{t_2}^{t_1}\int_{B_{\rho}(x_{0})}\frac{\partial\varphi}{\partial t}(t)(u_{\varepsilon}(t)-k)_{+}^{2}\chi_{[t_0-\theta\rho,t_0]}(t)\,\d \mu\,\d t\\
&\qquad\xrightarrow{\varepsilon\rightarrow 0}\frac{1}{2}\left[\int_{B_{\rho}(x_{0})}(u(t)-k)_{+}^{2}\varphi(t)\chi_{[t_0-\theta\rho,t_0]}(t)\,\d \mu\right]_{t=t_2}^{t_1}\\
&\qquad\qquad-\frac{1}{2}\int_{\max\lbrace t_{2},t_{0}-\theta\rho\rbrace}^{\min\lbrace t_1, t_{0}\rbrace}\int_{B_{\rho}(x_{0})}\frac{\partial\varphi}{\partial t}(t)(u(t)-k)_{+}^{2}\,\d \mu\,\d t.
\end{align*}

For the second term on the left-hand side of \eqref{4dg}, by Lemma \ref{vbthm}, we have
\[
\lim_{\varepsilon\rightarrow 0}\int_{t_2}^{t_1}\int_{B_{\rho}(x_{0})}(g_{u_{i}})_\varepsilon(t)\,\d\mu\,\d t 
=\int_{t_2}^{t_1}\int_{B_{\rho}(x_{0})}g_{u_i}(t)\,\d\mu\,\d t.
\]

Thus, by \eqref{4dg} we get
\begin{align*}
&\frac{1}{2}\left[\int_{B_{\rho}(x_{0})}(u(t)-k)_{+}^{2}\varphi(t)\chi_{[t_0-\theta\rho,t_0]}(t)\,\d \mu\right]_{t=t_{2}}^{t_{1}}
+\int_{t_2}^{t_1}\int_{B_{\rho}(x_{0})}g_{u_i}(t)\,\d\mu\,\d t\\
&\qquad\leq\frac{1}{2}\int_{\max\lbrace t_{2},t_{0}-\theta\rho\rbrace}^{\min\lbrace t_1, t_{0}\rbrace}\int_{B_{\rho}(x_{0})}\left\vert\frac{\partial\varphi}{\partial t}(t)\right\vert(u(t)-k)_{+}^{2}\,\d \mu\,\d t\\
&\qquad\qquad+\int_{t_2}^{t_1}\int_{B_{\rho}(x_{0})}(1-\varphi(t)\chi_{[t_0-\theta\rho,t_0]}(t))g_{(u_i-k)_+}(t)\,\d\mu\,\d t\\
&\qquad\qquad+\int_{t_2}^{t_1}\int_{B_{\rho}(x_{0})}(u_i(t)-k)_+g_{\varphi}(t)\chi_{[t_0-\theta\rho,t_0]}(t)\,\d \mu\,\d t
+\int_{t_2}^{t_1}\int_{B_{\rho}(x_{0})\setminus A(t)}g_{u_i}(t)\,\d\mu\,\d t,
\end{align*}
and consequently
\begin{align*}
&\frac{1}{2}\left[\int_{B_{\rho}(x_{0})}(u(t)-k)_{+}^{2}\varphi(t)\chi_{[t_0-\theta\rho,t_0]}(t)\,\d \mu\right]_{t=t_{2}}^{t_{1}}
+\int_{t_2}^{t_1}\int_{A(t)}g_{u_i}(t)\,\d\mu\,\d t\\
&\qquad\leq\frac{1}{2}\int_{\max\lbrace t_{2},t_{0}-\theta\rho\rbrace}^{\min\lbrace t_1, t_{0}\rbrace}\int_{B_{\rho}(x_{0})}\left\vert\frac{\partial\varphi}{\partial t}(t)\right\vert(u(t)-k)_{+}^{2}\,\d \mu\,\d t\\
&\qquad\qquad+\int_{t_2}^{t_1}\int_{B_{\rho}(x_{0})}(1-\varphi(t)\chi_{[t_2,t_1]}(t))g_{(u_i-k)_+}(t)\,\d\mu\,\d t\\
&\qquad\qquad+\int_{\max\lbrace t_{2},t_{0}-\theta\rho\rbrace}^{\min\lbrace t_1, t_{0}\rbrace}\int_{B_{\rho}(x_{0})}(u_i(t)-k)_+g_{\varphi}(t)\,\d \mu\,\d t.
\end{align*}
Since
\[
\int_{t_2}^{t_1}\int_{A(t)}g_{u_i}(t)\,\d\mu\,\d t
=\int_{t_2}^{t_1}\int_{B_{\rho}(x_{0})}g_{(u_i-k)_+}(t)\,\d\mu\,\d t,
\]
by absorbing terms, this implies
\[
\begin{split}
&\left[\frac{1}{2}\int_{B_{\rho}(x_{0})}(u(t)-k)_{+}^{2}\varphi(t)\chi_{[t_0-\theta\rho,t_0]}(t)\,\d \mu\right]_{t=t_{2}}^{t_{1}}+
\int_{t_2}^{t_1}\int_{B_{\rho}(x_{0})}\varphi(t)\chi_{[t_0-\theta\rho,t_0]}(t)g_{(u_i-k)_+}(t)\,\d\mu\,\d t\\
&\qquad\leq\frac{1}{2}\int_{\max\lbrace t_{2},t_{0}-\theta\rho\rbrace}^{\min\lbrace t_1, t_{0}\rbrace}\int_{B_{\rho}(x_{0})}\left\vert\frac{\partial\varphi}{\partial t}(t)\right\vert(u(t)-k)_{+}^{2}\,\d \mu\,\d t\\
&\qquad\qquad+\int_{\max\lbrace t_{2},t_{0}-\theta\rho\rbrace}^{\min\lbrace t_1, t_{0}\rbrace}\int_{B_{\rho}(x_{0})}g_{\varphi}(t)(u_i(t)-k)_+\,\d \mu\,\d t.
\end{split}
\]
By the Leibniz rule we obtain
\begin{align*}
&\int_{t_1}^{t_2}\int_{B_{\rho}(x_{0})}\varphi(t)\chi_{[t_0-\theta\rho,t_0]}(t)g_{(u_i-k)_+}(t)\,\d\mu\,\d t
=\int_{\max\lbrace t_{2},t_{0}-\theta\rho\rbrace}^{\min\lbrace t_1, t_{0}\rbrace}\int_{B_{\rho}(x_{0})}\varphi(t)g_{(u_i-k)_+}(t)\,\d\mu\,\d t\\
&\qquad\geq\int_{\max\lbrace t_{2},t_{0}-\theta\rho\rbrace}^{\min\lbrace t_1, t_{0}\rbrace}\int_{B_{\rho}(x_{0})}g_{\varphi(u_i-k)_+}(t)\,\d\mu\,\d t
-\int_{\max\lbrace t_{2},t_{0}-\theta\rho\rbrace}^{\min\lbrace t_1, t_{0}\rbrace}\int_{B_{\rho}(x_{0})}g_{\varphi}(t)(u_i(t)-k)_{+}\,\d \mu\,\d t.
\end{align*}
Thus we have
\[
\begin{split}
&\frac{1}{2}\left[\int_{B_{\rho}(x_{0})}(u(t)-k)_{+}^{2}\varphi(t)\chi_{[t_0-\theta\rho,t_0]}(t)\,\d \mu\right]_{t=t_{2}}^{t_{1}}
+\int_{\max\lbrace t_{2},t_{0}-\theta\rho\rbrace}^{\min\lbrace t_1, t_{0}\rbrace}\int_{B_{\rho}(x_{0})}g_{\varphi(u_i-k)_+}(t)\,\d\mu\,\d t\\
&\qquad\leq\frac{1}{2}\int_{\max\lbrace t_{2},t_{0}-\theta\rho\rbrace}^{\min\lbrace t_1, t_{0}\rbrace}\int_{B_{\rho}(x_{0})}\left\vert\frac{\partial\varphi}{\partial t}(t)\right\vert(u(t)-k)_{+}^{2}\,\d \mu\,\d t\\
&\qquad\qquad+2\int_{\max\lbrace t_{2},t_{0}-\theta\rho\rbrace}^{\min\lbrace t_1, t_{0}\rbrace}\int_{B_{\rho}(x_{0})}g_{\varphi}(t)(u_i(t)-k)_+\,\d \mu\,\d t.
\end{split}
\]

Letting $i\to\infty$, we obtain
\begin{equation}\label{genest}
\begin{split}
&\frac{1}{2}\left[\int_{B_{\rho}(x_{0})}(u(t)-k)_{+}^{2}\varphi(t)\chi_{[t_0-\theta\rho,t_0]}(t)\,\d \mu\right]_{t=t_{2}}^{t_{1}}
+\int_{\max\lbrace t_{2},t_{0}-\theta\rho\rbrace}^{\min\lbrace t_1, t_{0}\rbrace}\Vert D(\varphi(u-k)_{+})(t)\Vert(B_{\rho}(x_{0}))\,\d t\\
&\qquad\leq\frac{1}{2}\int_{\max\lbrace t_{2},t_{0}-\theta\rho\rbrace}^{\min\lbrace t_1, t_{0}\rbrace}\int_{B_{\rho}(x_{0})}\left\vert\frac{\partial\varphi}{\partial t}(t)\right\vert(u(t)-k)_{+}^{2}\,\d \mu\,\d t\\
&\qquad\qquad+2\int_{\max\lbrace t_{2},t_{0}-\theta\rho\rbrace}^{\min\lbrace t_1, t_{0}\rbrace}\int_{B_{\rho}(x_{0})}g_{\varphi}(t)(u(t)-k)_+\,\d \mu\,\d t.
\end{split}
\end{equation}
Choosing $t_{2}\in (0,T)$, with $t_{2}<t_0-\theta\rho$ then, for all $t_{0}-\theta\rho\leq t_{1}\leq t_{0}$, by \eqref{genest}, we have
\begin{equation*}
\begin{split}
&\frac{1}{2}\int_{B_{\rho}(x_{0})}(u(t_{1})-k)_{+}^{2}\varphi(t_1)\,\d \mu
\leq\frac{1}{2}\int_{B_{\rho}(x_{0})}(u(t_{1})-k)_{+}^{2}\varphi(t_1)\,\d \mu\\
&\qquad\qquad+\int_{t_0-\theta\rho}^{t_1}\Vert D(\varphi(u-k)_{+})(t)\Vert(B_{\rho}(x_{0}))\,\d t\\
&\qquad\leq\frac{1}{2}\int_{t_{0}-\theta\rho}^{t_1}\int_{B_{\rho}(x_{0})}\left\vert\frac{\partial\varphi}{\partial t}(t)\right\vert(u(t)-k)_{+}^{2}\,\d \mu\,\d t 
+2\int_{t_{0}-\theta\rho}^{t_1}\int_{B_{\rho}(x_{0})}g_{\varphi}(t)(u(t)-k)_+\,\d \mu\,\d t\\
&\qquad\qquad+\frac{1}{2}\int_{B_{\rho}(x_{0})}(u(t_{2})-k)_{+}^{2}\varphi(t_2)\chi_{[t_0-\theta\rho, t_0]}(t_2)\,\d \mu\\
&\qquad=\frac{1}{2}\int_{t_{0}-\theta\rho}^{t_1}\int_{B_{\rho}(x_{0})}\left\vert\frac{\partial\varphi}{\partial t}(t)\right\vert(u(t)-k)_{+}^{2}\,\d \mu\,\d t 
+2\int_{t_{0}-\theta\rho}^{t_1}\int_{B_{\rho}(x_{0})}g_{\varphi}(t)(u(t)-k)_+\,\d \mu\,\d t\\
&\qquad\leq \frac{1}{2}\int_{t_{0}-\theta\rho}^{t_0}\int_{B_{\rho}(x_{0})}\left\vert\frac{\partial\varphi}{\partial t}(t)\right\vert(u(t)-k)_{+}^{2}\,\d \mu\,\d t 
+2\int_{t_{0}-\theta\rho}^{t_0}\int_{B_{\rho}(x_{0})}g_{\varphi}(t)(u(t)-k)_+\,\d \mu\,\d t.
\end{split}
\end{equation*}
We conclude that
\begin{equation*}
\begin{split}
\frac{1}{2}\int_{B_{\rho}(x_{0})}(u(t_{1})-k)_{+}^{2}\varphi(t_{1})\,\d \mu
&\leq \frac{1}{2}\int_{t_{0}-\theta\rho}^{t_0}\int_{B_{\rho}(x_{0})}\left\vert\frac{\partial\varphi}{\partial t}(t)\right\vert(u(t)-k)_{+}^{2}\,\d \mu\,\d t \\
&\qquad+2\int_{t_{0}-\theta\rho}^{t_0}\int_{B_{\rho}(x_{0})}g_{\varphi}(t)(u(t)-k)_+\,\d \mu\,\d t,
\end{split}
\end{equation*}
and this holds for any $t_{0}-\theta\rho\leq t_{1}\leq t_{0}$.
Therefore
\begin{equation}\label{firstfinal}
\begin{split}
\esssup_{t_{0}-\theta\rho\leq t\leq t_{0}}\int_{B_{\rho}(x_{0})}(u(t)-k)_{+}^{2}\varphi(t)\,\d \mu&\leq \iint_{Q_{\rho,\theta}^{-}(x_{0},t_{0})}
\left\vert\frac{\partial\varphi}{\partial t}(t)\right\vert(u(t)-k)_{+}^{2}\,\d \mu\,\d t \\
&\qquad+4\iint_{Q_{\rho,\theta}^{-}(x_{0},t_{0})}g_{\varphi}(t)(u(t)-k)_+\,\d \mu\,\d t.
\end{split}
\end{equation}

On the other hand, by choosing $t_2=t_0-\theta\rho$ and $t_1=t_0$ in \eqref{genest}, we have
\begin{equation*}
\begin{split}
&\frac{1}{2}\int_{t_{0}-\theta\rho}^{t_0}\Vert D(\varphi(u-k)_{+})(t)\Vert(B_{\rho}(x_{0}))\,\d t\leq\int_{t_{0}-\theta\rho}^{t_0}\Vert D(\varphi(u-k)_{+})(t)\Vert(B_{\rho}(x_{0}))\,\d t\\
&\qquad\leq\frac{1}{2}\iint_{Q_{\rho,\theta}^{-}(x_{0},t_{0})}\left\vert\frac{\partial\varphi}{\partial t}(t)\right\vert(u(t)-k)_{+}^{2}\,\d \mu\,\d t
+2\iint_{Q_{\rho,\theta}^{-}(x_{0},t_{0})}g_{\varphi}(t)(u(t)-k)_+\,\d \mu\,\d t\\
&\qquad\qquad-\left[\frac{1}{2}\int_{B_{\rho}(x_{0})}(u(t)-k)_{+}^{2}\varphi(t)\,\d \mu\right]_{t=t_{0}-\theta\rho}^{t_{0}}.
\end{split}
\end{equation*}
This implies that
\begin{equation}\label{secondfinal}
\begin{split}
&\int_{t_{0}-\theta\rho}^{t_0}\Vert D(\varphi(u-k)_{+})(t)\Vert(B_{\rho}(x_{0}))\,\d t\leq\iint_{Q_{\rho,\theta}^{-}(x_{0},t_{0})}\left\vert\frac{\partial\varphi}{\partial t}(t)\right\vert(u(t)-k)_{+}^{2}\,\d \mu\,\d t\\
&\qquad+4\iint_{Q_{\rho,\theta}^{-}(x_{0},t_{0})}g_{\varphi}(t)(u(t)-k)_+\,\d \mu\,\d t-\left[\int_{B_{\rho}(x_{0})}(u(t)-k)_{+}^{2}\varphi(t)\,\d \mu\right]_{t=t_{0}-\theta\rho}^{t_{0}}.
\end{split}
\end{equation}

Adding \eqref{firstfinal} and \eqref{secondfinal} we conclude that
\begin{equation*}
\begin{split}
&\esssup_{t_{0}-\theta\rho\leq t\leq t_{0}}\int_{B_{\rho}(x_{0})}(u(t)-k)_{+}^{2}\varphi(t)\,\d \mu
+\int_{t_{0}-\theta\rho}^{t_{0}}\Vert D(\varphi(u-k)_+)(t)\Vert(B_{\rho}(x_{0}))\,\d t\\
&\qquad\leq 2\iint_{Q^{-}_{\rho, \theta}(x_{0},t_{0})}\left\vert\frac{\partial\varphi}{\partial t}(t)\right\vert(u(t)-k)_{+}^{2}\,\d \mu\,\d t
+8\iint_{Q^{-}_{\rho, \theta}(x_{0},t_{0})}(u(t)-k)_+g_{\varphi}(t)\,\d \mu\,\d t\\
&\qquad\qquad-\left[\int_{B_{\rho}(x_{0})}\varphi(t)(u(t)-k)_{+}^{2}\,\d \mu\right]_{t=t_{0}-\theta\rho}^{t_{0}}\\
&\qquad\leq 8\left(\iint_{Q^{-}_{\rho, \theta}(x_{0},t_{0})}\left\vert\frac{\partial\varphi}{\partial t}(t)\right\vert(u(t)-k)_{+}^{2}\,\d \mu\,\d t
+\iint_{Q^{-}_{\rho, \theta}(x_{0},t_{0})}(u(t)-k)_+g_{\varphi}(t)\,\d \mu\,\d t\right)\\
&\qquad\qquad-\left[\int_{B_{\rho}(x_{0})}\varphi(t)(u(t)-k)_{+}^{2}\,\d \mu\right]_{t=t_{0}-\theta\rho}^{t_{0}}.
\end{split}
\end{equation*}
This implies $u\in DG^{+}(\Omega_{T};8)$. A similar argument shows that $u\in DG^{-}(\Omega_{T};8)$ and thus $u\in DG(\Omega_{T};8)$

\end{proof}
\section{De Giorgi lemma}\label{DeGiorgiLemma}
This short section is devoted to prove that functions in a parabolic De Giorgi class are bounded from below. 
We apply the following standard iteration lemma in the proof, see \cite[Lemma 5.1]{dbgv}.
\begin{lemma}\label{lemma1}
Let $(Y_{n})_{n\in\N_0}$ be a sequence of positive numbers that satisfies 
\begin{equation}
Y_{n+1}\leq Cb^{n}Y_{n}^{1+\alpha},
\end{equation}
where $C, b>1$ and $\alpha>0$ are given numbers. If
$Y_{0}\leq C^{-1/\alpha}b^{-1/\alpha^{2}}$,
then $Y_{n}\rightarrow0$ as $n\rightarrow\infty$.
\end{lemma}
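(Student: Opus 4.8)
The plan is to establish, by induction on $n$, the quantitative geometric decay estimate
\[
Y_n \leq Y_0\, b^{-n/\alpha} \qquad\text{for every } n \in \N_0,
\]
after which the conclusion is immediate. The base case $n = 0$ holds with equality. For the inductive step, I would substitute the bound for $Y_n$ into the hypothesis $Y_{n+1} \leq C b^n Y_n^{1+\alpha}$ to obtain
\[
Y_{n+1} \leq C b^n \bigl(Y_0\, b^{-n/\alpha}\bigr)^{1+\alpha}
= C\, Y_0^{1+\alpha}\, b^{\,n - n(1+\alpha)/\alpha}
= C\, Y_0^{\alpha}\, Y_0\, b^{-n/\alpha},
\]
using the elementary identity $n - n(1+\alpha)/\alpha = -n/\alpha$. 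It then remains to absorb the factor $C Y_0^{\alpha}$: one has $C Y_0^{\alpha}\, b^{-n/\alpha} \leq b^{-(n+1)/\alpha}$ precisely when $C Y_0^{\alpha} \leq b^{-1/\alpha}$, and rearranging this last inequality gives exactly the smallness assumption $Y_0 \leq C^{-1/\alpha} b^{-1/\alpha^2}$. Hence $Y_{n+1} \leq Y_0\, b^{-(n+1)/\alpha}$, closing the induction.

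With the decay estimate in hand, the proof concludes by noting that $b > 1$ forces $b^{-n/\alpha} \to 0$, so that $0 \leq Y_n \leq Y_0\, b^{-n/\alpha} \to 0$ as $n \to \infty$, which is the assertion.

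There is no genuine obstacle in this argument; it is the classical De Giorgi fast-geometric-convergence lemma (compare \cite[Lemma 5.1]{dbgv}). The only point requiring a little foresight is choosing the correct ansatz for the induction, namely a geometric sequence with ratio $b^{-1/\alpha}$: this is dictated by the requirement that one iteration step shift the exponent of $b$ by exactly $-1/\alpha$, and once this is observed everything reduces to exponent bookkeeping together with the exact balance engineered into the hypothesis on $Y_0$. I would also note in passing that the induction in fact yields the quantitative rate, not merely $Y_n\to0$, which is the form typically used in the subsequent De Giorgi-type iterations of the paper.
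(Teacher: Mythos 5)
Your proof is correct, and it is the standard argument for this fast geometric convergence lemma: the induction with ansatz $Y_n\leq Y_0\,b^{-n/\alpha}$ closes exactly because the smallness condition $Y_0\leq C^{-1/\alpha}b^{-1/\alpha^2}$ is equivalent to $CY_0^{\alpha}\leq b^{-1/\alpha}$, and the exponent bookkeeping you carry out is right. The paper itself gives no proof, citing \cite[Lemma 5.1]{dbgv}, whose proof proceeds in essentially the same way, so there is nothing further to compare.
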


Let $\rho, \theta>0$ be such that $Q^{-}_{\rho, \theta}(x_{0},t_{0})\subset\Omega_{T}$ and let 
\[
m_{+}\geq\esssup_{Q^{-}_{\rho, \theta}(x_{0},t_{0})} u, 
\quad m_{-}\leq\essinf_{Q^{-}_{\rho, \theta}(x_{0},t_{0})} u
\quad\text{and}\quad  \omega\geq m_{+}-m_{-}.
\]

The following lemma is a version of \cite[Lemma 6.1]{dbgk} on metric measure spaces.

\begin{lemma}\label{degiorgi}
 Assume that  $u\in DG^{-}(\Omega_{T};\gamma)$.
\begin{itemize}
\item[{\rm (i)}] For $a, \xi\in (0,1)$ and $\overline{\theta}\in (0,\theta)$, there exists a constant $\nu_{-}=\nu_{-}(\gamma, C_{\mu}, C_{P}, \omega, \xi, a,\theta,\overline{\theta})$ such that if
\[
(\mu\otimes\mathcal{L}^{1})( Q^{-}_{\rho, \theta}(x_{0},t_{0})\cap\{u\leq m_{-}+\xi\omega\})
\leq\nu_{-}(\mu\otimes\mathcal{L}^{1})(Q^{-}_{\rho, \theta}(x_{0},t_{0})),
\]
then $u\geq m_{-}+a\xi\omega$ $(\mu\otimes\mathcal{L}^{1})$-almost everywhere in $B_{\frac{\rho}2}(x_{0})\times (t_{0}-\overline{\theta}\rho, t_{0}]$.
\item[{\rm (ii)}] For $a, \xi\in (0,1)$ and $\overline{\theta}\in (0,\theta)$, there exists a constant $\nu_{+}=\nu_{+}(\gamma, C_{\mu}, C_{P}, , \omega, \xi, a,\theta,\overline{\theta})$
such that if
\begin{equation*}
(\mu\otimes\mathcal{L}^{1})(Q^{-}_{\rho, \theta}(x_{0},t_{0})\cap\{u\geq m_{+}-\xi\omega\})
\leq\nu_{+}(\mu\otimes\mathcal{L}^{1})(Q^{-}_{\rho, \theta}(x_{0},t_{0})),
\end{equation*}
then $u\leq m_{+}-a\xi\omega$ $(\mu\otimes\mathcal{L}^{1})$-almost everywhere in $B_{\frac{\rho}2}(x_{0})\times (t_{0}-\overline{\theta}\rho, t_{0}]$.
\end{itemize}
\end{lemma}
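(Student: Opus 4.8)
I would prove part (i); part (ii) follows by the symmetric argument applied to $DG^{+}$, with the roles of $m_{-},(u-k)_{-}$ replaced by $m_{+},(u-k)_{+}$. The plan is a De Giorgi iteration over a sequence of shrinking cylinders and levels. Set
\[
\rho_n=\tfrac{\rho}{2}\bigl(1+2^{-n}\bigr),\qquad \theta_n=\overline{\theta}+(\theta-\overline{\theta})2^{-n},\qquad Q_n=B_{\rho_n}(x_0)\times(t_0-\theta_n\rho,t_0],
\]
so that $Q_{n+1}\Subset Q_n\subset Q^{-}_{\rho,\theta}(x_0,t_0)$, each $Q_n$ is admissible in the definition of $DG^{-}$, and $\bigcap_nQ_n=B_{\rho/2}(x_0)\times(t_0-\overline{\theta}\rho,t_0]$. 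For the levels set $k_n=m_{-}+a\xi\omega+(1-a)\xi\omega\,2^{-n}$, which decreases from $k_0=m_{-}+\xi\omega$ to $m_{-}+a\xi\omega$, and put $A_n=(\mu\otimes\mathcal{L}^1)(\{u<k_n\}\cap Q_n)$. For each $n$ choose a cutoff $\varphi_n\in\Lip(\Omega_T)$ with $0\le\varphi_n\le1$, $\varphi_n\equiv1$ on $Q_{n+1}$, $\supp\varphi_n\Subset B_{\rho_n}(x_0)\times(t_0-\theta_n\rho,T)$ (so that $\varphi_n$ vanishes on the bottom time slice of $Q_n$ and near $\partial B_{\rho_n}(x_0)$), and with $g_{\varphi_n}\le C2^n/\rho$ and $|\partial_t\varphi_n|\le C2^n/((\theta-\overline{\theta})\rho)$.

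\textbf{Energy estimate.} Applying the defining inequality \eqref{degiorgiclass} of $DG^{-}$ with level $k=k_n$ and cutoff $\varphi_n$ on the cylinder $Q_n$, the boundary term at the bottom slice vanishes (since $\varphi_n=0$ there) and the one at $t_0$ is nonpositive, hence can be dropped. This gives
\[
\esssup_{t}\int_{B_{\rho_n}(x_0)}\varphi_n(t)(u(t)-k_n)_{-}^2\,\d\mu+\int_{t_0-\theta_n\rho}^{t_0}\Vert D(\varphi_n(u-k_n)_{-})(t)\Vert(B_{\rho_n}(x_0))\,\d t\le \gamma\iint_{Q_n}\Bigl|\tfrac{\partial\varphi_n}{\partial t}\Bigr|(u-k_n)_{-}^2+\gamma\iint_{Q_n}g_{\varphi_n}(u-k_n)_{-}.
\]
On $Q_n$ one has $u\ge m_{-}$, so $(u-k_n)_{-}\le k_n-m_{-}\le\xi\omega$, and $(u-k_n)_{-}$ is supported in $\{u<k_n\}\cap Q_n$. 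Using the bounds on $g_{\varphi_n}$ and $\partial_t\varphi_n$, the right‑hand side is at most $C\gamma\bigl(\tfrac{(\xi\omega)^2}{\theta-\overline{\theta}}+\xi\omega\bigr)\tfrac{2^n}{\rho}A_n=:C_1\tfrac{2^n}{\rho}A_n$, where $C_1=C_1(\gamma,\omega,\xi,\theta,\overline{\theta})$.

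\textbf{Sobolev step and iteration.} The function $w_n=\varphi_n(u-k_n)_{-}$ lies in $L^1_{\loc}(0,T;BV_{\loc}(X))$, vanishes outside $B_{\rho_n}(x_0)\subset B_\rho(x_0)$, and the two quantities on the left above control $\esssup_t\int w_n(t)^2\,\d\mu$ and $\int\Vert Dw_n(t)\Vert\,\d t$ respectively (for the first, use $w_n^2\le\varphi_n(u-k_n)_{-}^2$). Applying the parabolic Sobolev inequality for $BV$ functions vanishing on a large part of the ball, Proposition \ref{1ap} (with ball $B_\rho(x_0)$; here $\rho<\tfrac14\operatorname{diam}X$), and $\kappa=\tfrac{Q+2}{Q}$,
\[
\iint_{Q_n}w_n^{\kappa}\,\d\mu\,\d t\le C\rho\Bigl(\int\Vert Dw_n(t)\Vert(B_\rho(x_0))\,\d t\Bigr)\Bigl(\esssup_t\dashint_{B_\rho(x_0)}w_n(t)^2\,\d\mu\Bigr)^{1/Q}\le C\rho\Bigl(\tfrac{C_1 2^n}{\rho}A_n\Bigr)^{1+1/Q}\mu(B_\rho(x_0))^{-1/Q}.
\]
On the other hand, since $\varphi_n\equiv1$ on $Q_{n+1}$ and $k_n-k_{n+1}=(1-a)\xi\omega\,2^{-n-1}$, on $\{u<k_{n+1}\}\cap Q_{n+1}$ we have $w_n=(u-k_n)_{-}>(1-a)\xi\omega\,2^{-n-1}$, whence $\iint_{Q_n}w_n^{\kappa}\ge\bigl((1-a)\xi\omega\,2^{-n-1}\bigr)^{\kappa}A_{n+1}$. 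Normalizing $Y_n=A_n/(\mu\otimes\mathcal{L}^1)(Q^{-}_{\rho,\theta}(x_0,t_0))$ and using the doubling property to compare $\mu(B_{\rho_n}(x_0))$ with $\mu(B_\rho(x_0))$, all powers of $\rho$ and $\mu(B_\rho(x_0))$ cancel, and one obtains
\[
Y_{n+1}\le C_*\,b^{n}\,Y_n^{1+\alpha},\qquad \alpha=\tfrac1Q,\quad b=2^{1+1/Q+\kappa}>1,
\]
with $C_*=C_*(\gamma,C_\mu,C_P,\omega,\xi,a,\theta,\overline{\theta})$. Set $\nu_{-}:=C_*^{-1/\alpha}b^{-1/\alpha^2}$, which depends only on the stated quantities. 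Under the hypothesis, $Y_0\le\nu_{-}$ (because $\{u\le m_{-}+\xi\omega\}\supseteq\{u<k_0\}$), so Lemma \ref{lemma1} gives $Y_n\to0$. Since $\{u<m_{-}+a\xi\omega\}\cap\bigl(B_{\rho/2}(x_0)\times(t_0-\overline{\theta}\rho,t_0]\bigr)\subseteq\{u<k_n\}\cap Q_n$ for all $n$, its measure is $\le A_n\to0$, hence zero, i.e.\ $u\ge m_{-}+a\xi\omega$ $\mu\otimes\mathcal{L}^1$-a.e.\ in $B_{\rho/2}(x_0)\times(t_0-\overline{\theta}\rho,t_0]$.

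\textbf{Main obstacle.} The delicate point, compared with \cite{dbgk}, is the passage from the $DG^{-}$ energy inequality to the geometric recursion $Y_{n+1}\le C_*b^nY_n^{1+\alpha}$ using only Proposition \ref{1ap} and the doubling condition in place of the Euclidean Sobolev embedding, and in particular the verification that, after normalization, every factor of $\rho$, $x_0$ and $\mu(B_\rho(x_0))$ cancels, so that the resulting threshold $\nu_{-}$ has exactly the claimed dependence. This requires careful bookkeeping of the cutoff bounds, of the truncation inequality $(u-k_n)_{-}\le\xi\omega$, and of the homogeneity of the Sobolev estimate. A secondary technicality is arranging the cutoffs so that $w_n$ genuinely vanishes on a fixed proportion of the ball used in Proposition \ref{1ap} (and noting that the restriction $\rho<\tfrac14\operatorname{diam}X$ is harmless in the local setting relevant to the continuity theorem).
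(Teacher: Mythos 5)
Your proposal is correct and follows essentially the same route as the paper's proof: the same shrinking cylinders $Q_n$ and levels $k_n=m_-+\xi_n\omega$, the same cutoffs and energy estimate from the $DG^-$ inequality, the parabolic Sobolev estimate of Proposition \ref{1ap}, and the iteration Lemma \ref{lemma1} yielding the threshold $\nu_-$. The only differences are cosmetic (normalizing $Y_n$ by the fixed cylinder rather than by $Q_n^-$, and using the correct truncation bound $(u-k_n)_-\le\xi\omega$ where the paper's display has a harmless typo).
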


\begin{proof}
We prove (i) and the proof for (ii) is similar.
For $n\in\N_0$, let
\[
\rho_{n}=\frac{\rho}{2}\left( 1+\frac{1}{2^{n}}\right), 
\quad\theta_{n}=\overline{\theta}+\frac{1}{2^{n}}(\theta-\overline{\theta})
\quad\text{and}\quad
t_{n}=t_{0}-\theta_{n}\rho.
\]
Then $\rho_{n}\searrow\frac{\rho}{2}$, 
$\theta_{n}\searrow\overline{\theta}$
and $t_{n}=\nearrow t_{0}-\theta\rho$ as $n\to\infty$.
Denote $B_{n}=B_{\rho_{n}}(x_{0})$ and $Q_{n}^{-}=B_{n}\times (t_{n},t_{0}]$.
Consider Lipschitz continuous functions $\zeta_{n}$, $n\in\N$, with
$\zeta_{n}=1$ in $Q_{n+1}^{-}$, $\zeta_{n}=0$ in $Q_{\rho,\theta}^{-}(x_{0},t_{0})\setminus Q_{n}^{-}$,
\begin{equation*}
g_{\zeta_{n}}\leq\frac{1}{\rho_{n}-\rho_{n+1}}=\frac{2^{n+2}}{\rho}
\quad\text{and}\quad 
0\leq(\zeta_{n})_{t}\leq\frac{2^{n+1}}{\theta-\overline{\theta}}\frac{1}{\rho}.
\end{equation*}
For $n\in\N$, let 
\[
\xi_{n}=a\xi+\frac{1-a}{2^{n}}\xi
\quad\text{and}\quad
k_{n}=m_{-}+\xi_{n}\omega.
\]
Then $\xi_{n}\searrow a\xi$ and $k_{n}\searrow m_{-}+a\xi\omega$ as $n\to\infty$.

Denote $A_{n}=Q_{n}^{-}\cap\{u\leq k_{n}\}$, $n\in\N$. By $\eqref{degiorgiclass}$ we have
\begin{align*}
&\esssup_{t_{n}< t<t_{0}}\int_{B_{n}}\zeta_{n}(t)(u(t)-k_{n})_{-}^{2}\,\d \mu
+\int_{t_{n}}^{t_{0}}\Vert D(\zeta_n(u-k_{n})_{-})(t)\Vert(B_{n})\,\d t \\
&\qquad\leq\gamma\iint_{Q_{n}}\left(g_{\zeta_n}(t)(u(t)-k_{n})_{-}
+\vert
(\zeta_{n})_{t}(t)
\vert
(u(t)-k_{n})_{-}^{2}\right)\,\d\mu\,\d t\\
&\qquad\leq\gamma\left(\frac{2^{n+2}}{\rho}\iint_{Q_{n}^{-}}(u(t)-k_{n})_{-}\,\d\mu\,\d t
+\frac{2^{n+1}}{(\theta-\overline{\theta})\rho}\iint_{Q_{n}^{-}}(u(t)-k_{n})_{-}^{2}\,\d\mu\,\d t\right).
\end{align*}
In $\{u\leq k_{n}\}$ we have
\begin{gather*}
0\leq k_{n}-u=m_{-}+\xi_{n}\omega-u=(m_{-}-u)+\xi_{n}\omega\leq\xi_{n}\omega\leq a\xi\omega,\\
0\leq (u-k_{n})_{-}\leq a\xi\omega
\quad\text{and}\quad
(u-k_{n})_{-}^{2}\leq a^{2}\xi^{2}\omega^{2}.
\end{gather*}
It follows that
\begin{align*}
&\frac{2^{n+2}}{\rho}\iint_{Q_{n}^{-}}(u(t)-k_{n})_{-}\,\d\mu\,\d t 
+\frac{2^{n+1}}{(\theta-\overline{\theta})\rho}\iint_{Q_{n}^{-}}(u(t)-k_{n})_{-}^{2}\,\d\mu\,\d t\\
&\qquad\leq\frac{2^{n+2}}{\rho}a\xi\omega(\mu\otimes\mathcal{L}^{1})(Q_{n}^{-}\cap\{u\leq k_{n}\}) 
+\frac{2^{n+1}}{(\theta-\overline{\theta})\rho}a^{2}\xi^{2}\omega^{2}(\mu\otimes\mathcal{L}^{1})(Q_{n}\cap\{u\leq k_{n}\})\\
&\qquad=(\mu\otimes\mathcal{L}^{1})(A_{n})\left(\frac{2^{n+2}}{\rho}a\xi\omega
+\frac{2^{n+1}}{(\theta-\overline{\theta})\rho}a^{2}\xi^{2}\omega^{2}\right).
\end{align*}
This implies
\begin{equation*}
\esssup_{t_{n}<t<t_{0}}\int_{B_{n}}\zeta_{n}(t)(u(t)-k_{n})_{-}^{2}\,\d\mu
+\int_{t_{n}}^{t_{0}}\Vert D(\zeta_{n}(u-k_{n})_{-})(t)\Vert(B_{n})\,\d t\leq 2^{n}\gamma_{1}\rho^{-1} (\mu\otimes\mathcal{L}^{1})(A_{n}),
\end{equation*}
where 
\[
\gamma_{1}=2\gamma a\xi\omega\frac{2(\theta-\overline{\theta})+a\xi\omega}{(\theta-\overline{\theta})}.
\]
By Proposition \ref{1ap} there exists a constant $C=C(C_\mu,C_P)$ such that, for $\kappa=\frac{Q+2}{Q}$, we have
\begin{equation}\label{V}
\begin{split}
&\iint_{Q_{n}^{-}}\left(\zeta_{n}(t)(u(t)-k_{n})_{-}\right)^{\kappa}\,\d\mu\,\d t\\
&\qquad\leq\frac{C\rho_{n}}{\mu(B_{n})^{\frac1Q}}
\int_{t_{n}}^{t_{0}}\Vert  D(\zeta_{n}(u-k_{n})_{-})(t)\Vert\,\d t
 \left(\esssup_{t_{n}<t<t_{0}}\int_{B_{n}}\zeta_{n}(t)(u(t)-k_{n})_{-}^{2}\,\d\mu\right)^{\frac{1}{Q}} \\
&\qquad\le\frac{C\rho_{n}}{\mu(B_{n})^{\frac1Q}}
\left(\esssup_{t_{n}<t<t_{0}}\int_{B_{n}} (u(t)-k_{n})_{-}^{2}\zeta_{n}(t)\,\d\mu
+\int_{t_{n}}^{t_{0}}\Vert  D((u-k_{n})_{-}\zeta_{n})(t)\Vert\,\d t\right)^{1+\frac1Q}\\
&\qquad\leq\frac{C\rho_{n}}{\mu(B_{n})^{\frac1Q}}
\left(2^{n}\gamma_{1}\rho^{-1} (\mu\otimes\mathcal{L}^{1})(A_{n})\right)^{1+\frac1Q}\\
&\qquad\le2^{n\frac{Q+1}{Q}}\rho^{-\frac1Q}\gamma_{2}\left(\frac{(\mu\otimes\mathcal{L}^{1})(A_{n})}{\mu(B_{n})}\right)^{1+\frac1Q}\mu(B_{n}),
\end{split}
\end{equation}
where $\gamma_{2}=C\gamma_{1}^{1+\frac1Q}$.
On the other hand, we have
\begin{equation}\label{VI}
\begin{split}
\iint_{Q_{n}^{-}}\left((u(t)-k_{n})_{-}\zeta_{n}(t)\right)^{\kappa}\,\d\mu\,\d t
&\geq \iint_{Q_{n+1}^{-}}\left((u(t)-k_{n})_{-}\zeta_{n}(t)\right)^{\kappa}\,\d\mu\,\d t\\
&\geq \iint_{A_{n+1}}(u(t)-k_{n})_{-}^{\kappa}\,\d\mu\,\d t\\
&=\iint_{A_{n+1}}(k_{n}-u(t))^{\kappa}\,\d\mu\,\d t\\
&\geq\iint_{A_{n+1}}(k_{n}-k_{n+1})^{\kappa}\,\d\mu\,\d t\\
&=2^{-\kappa n}\left(\frac{\omega\xi(1-a)}{2}\right)^{\kappa}(\mu\otimes\mathcal{L}^{1})(A_{n+1}).
\end{split}
\end{equation}

Let
\begin{equation*}
Y_{n}=\frac{(\mu\otimes\mathcal{L}^{1})(A_{n})}{(\mu\otimes\mathcal{L}^{1})(Q_{n}^{-})},
\end{equation*}
for $n\in\N$.
By \eqref{VI} and \eqref{V} we obtain
\begin{equation}\label{VII}
\begin{split}
Y_{n+1}
&=\frac{(\mu\otimes\mathcal{L}^{1})(A_{n+1})}{(\mu\otimes\mathcal{L}^{1})(Q_{n+1}^{-})} \\
&\leq\frac{1}{(\mu\otimes\mathcal{L}^{1})(Q_{n+1}^{-})}2^{\kappa n}
\left(\frac{\omega\xi(1-a)}{2}\right)^{-\kappa}
\iint_{Q_{n}^{-}}\left((u(t)-k_{n})_{-}\zeta_{n}(t)\right)^{\kappa}\,\d\mu\,\d t \\
&\leq\frac{\mu(B_{n})}{(\mu\otimes\mathcal{L}^{1})(Q_{n+1}^{-})}2^{\kappa n}\left(\frac{\omega\xi(1-a)}{2}\right)^{-\kappa}\left(2^{n(1+\frac1Q)}\rho^{-\frac1Q}\gamma_{2}
\left(\frac{(\mu\otimes\mathcal{L}^{1})(A_{n})}{\mu(B_{n})}\right)^{1+\frac1Q}\right) \\
&=\frac{\mu(B_{n})(\theta_{n}\rho)^{1+\frac1Q}}{\mu(B_{n+1})\theta_{n+1}\rho^{1+\frac1Q}}
2^{n(1+\frac1Q+\kappa)}\gamma_{2}
\left(\frac{\omega\xi(1-a)}{2}\right)^{-\kappa}Y_{n}^{1+\frac{1}{Q}} \\
&\leq\frac{\mu(B_{n})}{\mu(B_{n+1})}\frac{\theta_{n}}{\theta_{n+1}}b^{n}\gamma_{3}Y_{n}^{1+\frac{1}{Q}},
\end{split}
\end{equation}
where
\[
b=2^{1+\frac1Q+\kappa}
\quad\text{and}\quad
\gamma_{3}=\gamma_{2}\theta^{\frac1Q}\left(\frac{\omega\xi(1-a)}{2}\right)^{-\kappa}.
\]
By the doubling property we have
\begin{equation*}
\mu(B_{n})
=\mu(B_{\rho_{n}}(x_{0}))
\leq\mu(B_{2\rho_{n+1}}(x_{0}))
\leq C_{\mu}\mu(B_{\rho_{n+1}}(x_{0}))
=\mu(B_{n+1}),
\end{equation*}
and consequently
\[
\frac{\mu(B_{n})}{\mu(B_{n+1})}\frac{\theta_{n}}{\theta_{n+1}}\leq 2C_{\mu},
\]
for every $n\in\N$.
By \eqref{VII} we conclude
\begin{equation*}
Y_{n+1}\leq 2C_{\mu}b^{n}\gamma_{3}Y_{n}^{1+\frac{1}{Q}}=\gamma_{4}b^{n}Y_{n}^{1+\frac{1}{Q}},
\end{equation*}
where 
\begin{equation*}
\gamma_{4}=2C_\mu\gamma_{3}
=2^{\frac{Q-1}{Q}}C\left(\frac{\omega\xi}{\theta}\right)^{\frac1Q}(1-a)^{-\frac{Q+2}{Q}}
\left(a\gamma\frac{2(\theta-\overline{\theta})+a\xi\omega}{(\theta-\overline{\theta})}\right)^{\frac{Q+1}{Q}}.
\end{equation*}
By Lemma \ref{lemma1}, we have $Y_{n}\rightarrow 0$ as $n\rightarrow\infty$ provided
\begin{align*}
Y_{0}&\leq \gamma_{4}^{-Q}b^{-Q^{2}}
=2^{-(Q-1)}C\left(\frac{\omega\xi}{\theta}\right)^{-1}(1-a)^{Q+2}\left(a\gamma\frac{2(\theta-\overline{\theta})
+a\xi\omega}{(\theta-\overline{\theta})}\right)^{-(Q+1)}\\
&=\nu_{-}=\nu_{-}(\gamma, C_{\mu}, C_{P}, \omega, \xi, a,\theta,\overline{\theta}).
\end{align*}
The proof of (ii) is almost identical. One starts from inequalities \eqref{degiorgiclass} for the truncated functions $(u-k_{n})_{+}$ with $k_{n}=\mu_{+}-\xi_{n}\omega$ for the same choice of $\xi_{n}$.
\end{proof}

\section{Time expansion of positivity}\label{TimeExpansion}
In this section we prove an expansion of positivity result, which is a version of \cite[Lemma 7.1]{dbgk} on metric measure spaces. Roughly speaking, it asserts that information on the measure of the positivity set of $u$ at time level $t_0$ over the ball $B_{\rho}(x_0)$, translates into an expansion of positivity set in time (from $t_{0}$ to $t_0+\theta\rho$ for some suitable $\theta$). Most of the arguments and proofs are based on the energy estimates and De Giorgi Lemma of Section \ref{ParabolicDeGiorgiClass} and Section \ref{DeGiorgiLemma}.\

For a cylinder $Q_{2\rho,\theta}^{+}(x_{0},t_{0})=B_{2\rho}(x_{0})\times (t_{0},t_{0}+\theta\rho)\subset\Omega_{T}$, let
\begin{equation*}
m_{+}\geq\esssup_{Q_{2\rho,\theta}^{+}(x_{0},t_{0})} u,
\quad m_{-}\leq\essinf_{Q_{2\rho,\theta}^{+}(x_{0},t_{0})} u
\quad\text{and}\quad 
\omega\geq m_{+}-m_{-}.
\end{equation*}
The parameter $\theta$ will be determined by the proof.
Let $\xi\in (0,1)$ be a fixed parameter. 

\begin{lemma}\label{TimeExpansionPos}
Let $u\in DG^{-}(\Omega_{T};\gamma)$ and assume that
\begin{equation*}
\mu(\{x\in B_{\rho}(x_{0}):u(x,t_{0})\geq m_{-}+\xi\omega\}\geq\frac{1}{2}\mu(B_{\rho}(x_{0})),
\end{equation*}
for some $(x_{0},t_{0})\in\Omega_{T}$ and some $\rho>0$.
Then there exist $\delta=\delta(C_\mu, \gamma)\in(0,1)$ and $\varepsilon\in(0,1)$ such that
\begin{equation*}
\mu(\{x\in B_{\rho}(x_{0}):u(x,t)\geq m_{-}+\varepsilon\xi\omega\})
\geq\frac{1}{4}\mu(B_{\rho}(x_{0})),
\end{equation*}
for every $t\in(t_{0},t_{0}+\delta\xi\omega\rho)$.
\end{lemma}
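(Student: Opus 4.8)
The plan is to fix the level $k:=m_-+\xi\omega$ and to run a De Giorgi–type energy argument on backward cylinders whose bottom is exactly $t_0$, working with the truncation $w:=(u-k)_-=(k-u)_+$. Since $m_-$ is a lower bound for $u$ on the cylinder under consideration, $0\le w\le\xi\omega$ there; and the hypothesis says precisely that $w(\cdot,t_0)$ vanishes on $\{u(\cdot,t_0)\ge k\}$, a set covering at least half of $B_\rho(x_0)$, so that
\[
\int_{B_\rho(x_0)}w(x,t_0)^2\,\d\mu\le(\xi\omega)^2\,\mu\big(\{u(\cdot,t_0)<k\}\cap B_\rho(x_0)\big)\le\tfrac12(\xi\omega)^2\mu(B_\rho(x_0)).
\]
I will fix, in this order, a radial parameter $\sigma\in(0,1)$, then the time parameter $\delta\in(0,1)$, and finally $\varepsilon\in(0,1)$; the parameter in the statement is taken to be $\theta=\delta\xi\omega$, so that all cylinders below lie inside $Q^+_{2\rho,\theta}(x_0,t_0)\subset\Omega_T$.

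First I would insert a suitable test function into the inequality \eqref{degiorgiclass} for $DG^-(\Omega_T;\gamma)$. Fix $t\in(t_0,t_0+\delta\xi\omega\rho)$ and apply \eqref{degiorgiclass} on $B_\rho(x_0)\times(t_0,t]$ (apex $(x_0,t)$) with $\varphi(x,s)=\psi(x)\zeta(s)$, where $\psi\in\mathrm{Lip}$, $0\le\psi\le1$, $\psi\equiv1$ on $B_{(1-\sigma)\rho}(x_0)$, $\supp\psi\Subset B_\rho(x_0)$, $g_\psi\le(\sigma\rho)^{-1}$, and $\zeta\in\mathrm{Lip}(0,T)$ satisfies $\zeta\equiv1$ on $[t_0,t]$ with support a slightly larger interval compactly contained in $(0,T)$, so that the ramps of $\zeta$ lie outside the working cylinder. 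Then $\partial_s\varphi\equiv0$ on that cylinder, the term with $|\partial_s\varphi|$ drops, the boundary contribution at the top $s=t$ is nonpositive and may be discarded, and the boundary contribution at the bottom is exactly $+\int_{B_\rho(x_0)}\psi\,w(t_0)^2\,\d\mu$ — with coefficient $1$. Using $g_\varphi=\zeta g_\psi\le(\sigma\rho)^{-1}$ on the cylinder, $w\le\xi\omega$ and $t-t_0<\delta\xi\omega\rho$, I get, for every such $t$,
\[
\esssup_{t_0\le s\le t}\int_{B_{(1-\sigma)\rho}(x_0)}w(s)^2\,\d\mu\le\frac{\gamma}{\sigma\rho}\iint_{B_\rho(x_0)\times(t_0,t)}w\,\d\mu\,\d s+\int_{B_\rho(x_0)}w(t_0)^2\,\d\mu\le(\xi\omega)^2\mu(B_\rho(x_0))\Big(\tfrac{\gamma\delta}{\sigma}+\tfrac12\Big).
\]
It is essential that the initial datum enters with coefficient $1$, not $\gamma$: since $\gamma=8$ by Proposition \ref{belonging}, a factor $\gamma$ here would be fatal. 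This is the reason for building $\varphi$ so that $\partial_t\varphi$ vanishes on the cylinder, a device analogous to the time cutoffs in the proofs of Proposition \ref{belonging} and Lemma \ref{degiorgi}.

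It remains to turn this $L^\infty_tL^2_x$ bound into a measure estimate. For $t\in(t_0,t_0+\delta\xi\omega\rho)$, on the set where $u(\cdot,t)<m_-+\varepsilon\xi\omega$ inside $B_{(1-\sigma)\rho}(x_0)$ one has $w(\cdot,t)>(1-\varepsilon)\xi\omega$, so dividing the last bound by $((1-\varepsilon)\xi\omega)^2$ yields
\[
\mu\big(\{u(\cdot,t)\ge m_-+\varepsilon\xi\omega\}\cap B_\rho(x_0)\big)\ge\mu(B_{(1-\sigma)\rho}(x_0))-\frac{1}{(1-\varepsilon)^2}\Big(\tfrac{\gamma\delta}{\sigma}+\tfrac12\Big)\mu(B_\rho(x_0)).
\]
To close the argument I need a quantitative lower bound for $\mu(B_{(1-\sigma)\rho}(x_0))/\mu(B_\rho(x_0))$ that is uniform in $(x_0,\rho)$. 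This is the one genuinely non-Euclidean point: in \cite{dbgk} the corresponding ratio for cubes is simply $(1-\sigma)^N$, whereas here I would invoke the annular decay property of doubling measures on quasiconvex spaces (a complete doubling space admitting a Poincaré inequality is quasiconvex), which yields constants $C_0\ge1$, $\alpha\in(0,1]$, depending only on the structural data, with $\mu(B_\rho(x_0))-\mu(B_{(1-\sigma)\rho}(x_0))\le C_0\sigma^\alpha\mu(B_\rho(x_0))$. I then fix $\sigma$ so small that $C_0\sigma^\alpha\le\tfrac1{16}$, afterwards $\delta\in(0,1)$ so small that $\gamma\delta/\sigma\le\tfrac1{16}$, and finally $\varepsilon\in(0,1)$ so small that $(1-\varepsilon)^{-2}\le\tfrac{11}{9}$; with these choices the right-hand side above is at least $\big(\tfrac{15}{16}-\tfrac{11}{9}\cdot\tfrac{9}{16}\big)\mu(B_\rho(x_0))=\tfrac14\mu(B_\rho(x_0))$, which is the assertion, with $\delta$ and $\varepsilon$ depending only on the data and on $\gamma$. (The bound holds first for a.e.\ $t$ in the stated range and then for every such $t$ after passing to a good representative of $s\mapsto\mu(\{u(\cdot,s)<c\}\cap B_\rho(x_0))$, exactly as in \cite{dbgk}.) The main obstacle is precisely this uniform annular bound; a secondary, purely technical, obstacle is the construction of $\varphi$ so that the initial-data term is not multiplied by $\gamma$.
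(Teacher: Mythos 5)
Your proposal is correct and, in its skeleton, coincides with the paper's proof: fix the level $k=m_-+\xi\omega$, test \eqref{degiorgiclass} for $(u-k)_-$ on a forward cylinder with a purely spatial cutoff so that the $|\partial_t\varphi|$ term disappears and the initial datum enters with coefficient $1$ (this is exactly why the paper's cutoff $\zeta$ is taken independent of $t$), bound the resulting $\operatorname{ess\,sup}$-in-time of the $L^2$ quantity by $(\xi\omega)^2\bigl(\tfrac{\gamma\delta}{\sigma}+\tfrac12\bigr)\mu(B_\rho(x_0))$ using the hypothesis at $t_0$ and $w\le\xi\omega$, and conclude via Chebyshev on $B_{(1-\sigma)\rho}(x_0)$ plus an annulus estimate; your arithmetic ($C_0\sigma^\alpha\le\tfrac1{16}$, $\gamma\delta/\sigma\le\tfrac1{16}$, $(1-\varepsilon)^{-2}\le\tfrac{11}{9}$, margin $\tfrac{15}{16}-\tfrac{11}{16}=\tfrac14$) checks out. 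The one step where you genuinely diverge is the annulus bound: the paper deduces $\mu(B_\rho(x_0)\setminus B_{(1-\sigma)\rho}(x_0))\le Q\sigma\,\mu(B_\rho(x_0))$ from \eqref{desigualdadradios} and Bernoulli's inequality, which yields the explicit choices $\sigma=\tfrac1{16Q}$, $\theta=\tfrac{\xi\omega}{2^8\gamma Q}$ (so $\delta=\tfrac1{2^8\gamma Q}$) and $\varepsilon=\tfrac1{32}$, whereas you invoke the $\alpha$-annular decay property of doubling measures on quasiconvex spaces. Your route is arguably more robust, since the ratio inequality \eqref{desigualdadradios} comes with a multiplicative constant that in the standard references is at least $1$, so the paper's Bernoulli step indeed deserves the extra care you give it; what it costs you is explicitness: your $\delta$ and $\varepsilon$ then also depend on the annular-decay constants (hence on $C_P$ through quasiconvexity), which is slightly weaker than the stated dependence $\delta=\delta(C_\mu,\gamma)$, and the proof of Theorem \ref{NecSuf} quotes the concrete values $\tilde\delta=\tfrac1{2^8\gamma Q}$ and $\varepsilon=\tfrac1{32}$ from this lemma, so with your version that later proof would have to carry your implicit constants instead — a cosmetic adjustment, not a gap. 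Your final remark on upgrading the almost-every-$t$ bound to every $t$ is at least as careful as the paper's treatment of the same point.
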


\begin{proof}
Let $A_{k,\rho}(t)=\{x\in B_{\rho}(x_{0}):u(x, t)<k\}$ with $k>0$ and $t>0$.
Since
\begin{align*}
&\frac{1}{2}\mu(B_{\rho}(x_{0}))+\mu(\{x\in B_{\rho}(x_{0}):u(x,t_{0})<m_{-}+\xi\omega\})\\
&\qquad\leq\mu(\{x\in B_{\rho}(x_{0}):u(x,t_{0})\geq m_{-}+\xi\omega\})
+\mu(\{x\in B_{\rho}(x_{0}):u(x,t_{0})< m_{-}+\xi\omega\})\\
&\qquad\leq \mu(B_{\rho}(x_{0})),
\end{align*}
we have
\begin{equation*}
\mu(A_{m_{-}+\xi\omega,\rho}(t_{0}))
=\mu(\{x\in B_{\rho}(x_{0}):u(x,t_{0})< m_{-}+\xi\omega\})
\leq \frac{1}{2}\mu(B_{\rho}(x_{0})).
\end{equation*}
Let $\zeta$ be a Lipschitz cutoff function which is independent of $t$, $0\le\zeta\le1$, $\zeta=1$ in $B_{(1-\sigma)\rho}(x_{0})$ and $g_{\zeta}\leq\frac{1}{\sigma\rho}$,
where $\sigma\in(0,1)$ is to be chosen.
We apply the De Giorgi condition for $(u-(m_{-}+\xi\omega))_{-}$ in $Q_{\rho,\theta}^{+}(x_{0},t_{0})=B_{\rho}(x_{0})\times(t_{0},t_{0}+\theta\rho]$, where $\theta>0$ is to be chosen, and obtain
\begin{align*}
&\esssup_{t_{0}\leq t\leq t_{0}+\theta\rho}\int_{B_{\rho}(x_{0})}\zeta(t)(u(t)-(m_{-}+\xi\omega))_{-}^{2}\,\d\mu
+\int_{t_{0}}^{t_{0}+\theta\rho}\Vert D((u-(m_{-}+\xi\omega))_{-}\zeta)(t)\Vert(B_{\rho}(x_{0}))\,\d t\\
&\qquad\le\gamma\iint_{Q_{\rho,\theta}^{+}(x_0,t_0)}g_{\zeta}(t)(u(t)-(m_{-}+\xi\omega))_{-}\,\d\mu\,\d t
-\left[\int_{B_{\rho}(x_{0})}(u(t)-(m_{-}+\xi\omega))_{-}^{2}\zeta(t)\,\d\mu\right]_{t=t_{0}}^{t_0+\theta\rho}\\
&\qquad\le\gamma\iint_{Q_{\rho,\theta}^{+}(x_0,t_0)}g_{\zeta}(t)(u(t)-(m_{-}+\xi\omega))_{-}\,\d\mu\,\d t
+\int_{B_{\rho}(x_{0})}(u(t_0)-(m_{-}+\xi\omega))_{-}^{2}\zeta(t_0)\,\d\mu\\
&\qquad\le\frac{\gamma}{\sigma\rho}\iint_{Q_{\rho,\theta}^{+}(x_0,t_0)}(u(t)-(m_{-}+\xi\omega))_{-}\,\d\mu\,\d t
+\int_{B_{\rho}(x_{0})}(u(t_0)-(m_{-}+\xi\omega))_{-}^{2}\,\d\mu.
\end{align*}
Notice that for $t\in (t_{0},t_{0}+\theta\rho)$, we have
\begin{align*}
&\int_{B_{(1-\sigma)\rho}(x_{0})}(u(t)-(m_{-}+\xi\omega))_{-}^{2}\zeta(t)\,\d \mu
=\int_{B_{(1-\sigma)\rho}(x_{0})}(u(t)-(m_{-}+\xi\omega))_{-}^{2}\,\d\mu\\
&\qquad\leq\int_{B_{\rho}(x_{0})}(u(t)-(m_{-}+\xi\omega))_{-}^{2}\zeta(t)\,\d\mu\\
&\qquad\leq \esssup_{t_{0}\leq t\leq t_{0}+\theta\rho}\int_{B_{\rho}(x_{0})}\zeta(t)(u(t)-(m_{-}+\xi\omega))_{-}^{2}\,\d\mu\\
&\qquad\leq\frac{\gamma}{\sigma\rho}\iint_{Q_{\rho,\theta}^{+}(x_0,t_0)}(u(t)-(m_{-}+\xi\omega))_{-}\,\d\mu\,\d t
+\int_{B_{\rho}(x_{0})}(u(t_0)-(m_{-}+\xi\omega))_{-}^{2}\,\d\mu\\
&\qquad\leq\frac{\gamma}{\sigma\rho}\xi\omega\theta\rho\mu(B_{\rho}(x_{0}))
+\frac{(\xi\omega)^{2}}{2}\mu(B_{\rho}(x_{0}))\\
&\qquad=(\xi\omega)^{2}\left(\frac{\gamma\theta}{\sigma(\xi\omega)}+\frac{1}{2}\right)\mu(B_{\rho}(x_{0})).
\end{align*}
The last inequality holds, because
\begin{align*}
\int_{B_{\rho}(x_{0})}(u(t_{0})-(m_{-}+\xi\omega))_{-}^{2}\,\d\mu
&=\int_{A_{m_{-}+\xi\omega, \rho}(t_{0})}(m_{-}+\xi\omega-u(t_{0}))^{2}\,\d\mu\\
&\le(\xi\omega)^{2}\mu(A_{m_{-}+\xi\omega, \rho}(t_{0}))
\leq\frac{(\xi\omega)^{2}}{2}\mu(B_{\rho}(x_{0})),
\end{align*}
and
\begin{align*}
\iint_{Q_{\rho,\theta}^{+}(x_0,t_0)}(u(t)-(m_{-}+\xi\omega))_{-}\,\d\mu\,\d t
&=\int_{t_{0}}^{t_{0}+\theta\rho}\int_{A_{m_{-}+\xi\omega, \rho}(t_{0})}(m_{-}+\xi\omega-u(t))\,\d\mu\,\d t\\
&\leq\int_{t_{0}}^{t_{0}+\theta\rho}\xi\omega\int_{A_{m_{-}+\xi\omega, \rho}(t_{0})}\,\d\mu\,\d t\\
&\leq\xi\omega\mu(B_{\rho}(x_{0}))\theta\rho.
\end{align*}

Therefore
\begin{equation*}
\int_{B_{(1-\sigma)\rho}(x_{0})}(u(t)-(m_{-}+\xi\omega))_{-}^{2}\,\d\mu
\leq(\xi\omega)^{2}\left(\frac{\gamma\theta}{\sigma(\xi\omega)}+\frac{1}{2}\right)\mu(B_{\rho}(x_{0})),
\end{equation*}
for every $t\in (t_{0},t_{0}+\theta\rho)$.
The left-hand side can be estimated by
\begin{align*}
\int_{B_{(1-\sigma)\rho}(x_{0})}(u(t)-(m_{-}+\xi\omega))_{-}^{2}\,\d\mu
&\geq \int_{A_{m_{-}+\varepsilon\xi\omega, (1-\sigma)\rho}(t)}(u(t)-(m_{-}+\xi\omega))_{-}^{2}\,\d\mu\\
&\geq \int_{A_{m_{-}+\varepsilon\xi\omega, (1-\sigma)\rho}(t)}(m_{-}+\xi\omega-u(t))^{2}\,\d\mu\\
&>\int_{A_{m_{-}+\varepsilon\xi\omega, (1-\sigma)\rho}(t)}(m_{-}+\xi\omega-(m_{-}+\varepsilon\xi\omega))^{2}\,\d\mu\\
&=(\xi\omega)^{2}(1-\varepsilon)^{2}\mu(A_{m_{-}+\varepsilon\xi\omega, (1-\sigma)\rho}(t)),
\end{align*}
where $\varepsilon\in(0,1)$ is to be chosen.

By the doubling property and Bernoulli's inequality, we obtain
\begin{align*}
\mu(A_{m_{-}+\varepsilon\xi\omega, \rho}(t))&=\mu\left( A_{m_{-}+\varepsilon\xi\omega, (1-\sigma)\rho}(t)\cup\left(A_{m_{-}+\varepsilon\xi\omega, \rho}(t)\setminus A_{m_{-}+\varepsilon\xi\omega, (1-\sigma)\rho}(t)\right)\right)\\
&\leq\mu\left( A_{m_{-}+\varepsilon\xi\omega, (1-\sigma)\rho}(t)\right)+\mu(B_{\rho}(x_{0})\setminus B_{(1-\sigma)\rho}(x_{0}))\\
&\le\mu\left( A_{m_{-}+\varepsilon\xi\omega, (1-\sigma)\rho}(t)\right)
+\left(\mu(B_{\rho}(x_{0}))-\frac{\mu(B_{(1-\sigma)\rho}(x_{0}))}{\mu(B_{\rho}(x_{0}))}\mu(B_{\rho}(x_{0}))\right)\\
&\le\mu\left( A_{m_{-}+\varepsilon\xi\omega, (1-\sigma)\rho}(t)\right)+\mu(B_{\rho}(x_{0}))\left(1-C_{\mu}^{2}(1-\sigma)^{Q}\right)\\
&\leq\mu\left( A_{m_{-}+\varepsilon\xi\omega, (1-\sigma)\rho}(t)\right)+\mu(B_{\rho}(x_{0}))\left(1-(1-\sigma)^{Q}\right)\\
&\leq\mu\left( A_{m_{-}+\varepsilon\xi\omega, (1-\sigma)\rho}(t)\right)
+Q\sigma\mu(B_{\rho}(x_{0})).
\end{align*}
Combining these estimates gives
\begin{align*}
\mu(A_{m_{-}+\varepsilon\xi\omega, \rho}(t))
&\leq\frac{1}{(\xi\omega)^{2}(1-\varepsilon)^{2}}\int_{B_{(1-\sigma)\rho}(x_{0})}(u(t)-(m_{-}+\xi\omega))_{-}^{2}\,\d\mu
+Q\sigma\mu(B_{\rho}(x_{0}))\\
&\leq\frac{1}{(\xi\omega)^{2}(1-\varepsilon)^{2}}
\left((\xi\omega)^{2}\left(\frac{\gamma\theta}{\sigma\xi\omega}+\frac{1}{2}\right)\mu(B_{\rho}(x_{0}))\right)
+Q\sigma\mu(B_{\rho}(x_{0}))\\
&=\mu(B_{\rho}(x_{0}))\left(\frac{1}{(1-\varepsilon)^{2}}\left(\frac{\gamma\theta}{\sigma\xi\omega}+\frac{1}{2}\right)+Q\sigma\right)\\
&\leq\frac{\mu(B_{\rho}(x_{0}))}{(1-\varepsilon)^{2}}\left(\frac{\gamma\theta}{\sigma\xi\omega}+\frac{1}{2}+Q\sigma\right).
\end{align*}
Setting $\theta=\frac{\xi\omega}{2^{8}\gamma Q}$ and $\sigma=\frac{1}{16Q}$, we obtain
\[
\frac{1}{(1-\varepsilon)^{2}}\left(\frac{\gamma\theta}{\sigma\xi\omega}+\frac{1}{2}+Q\sigma\right)
=\frac{1}{(1-\varepsilon)^{2}}\left(\frac{\frac{1}{2^{8}Q}}{\frac{1}{16Q}}+\frac{1}{2}+\frac{1}{16}\right)
=\frac{1}{(1-\varepsilon)^{2}}\frac{5}{8}
<\frac{3}{4}.
\]
By letting $0<\varepsilon\leq\frac{1}{32}$, we have
\begin{equation*}
\mu(\{x\in B_{\rho}(x_{0}):u(x,t)>m_{-}+\varepsilon\xi\omega\})+\mu(A_{m_{-}+\varepsilon\xi\omega, \rho}(t))\geq\mu(B_{\rho}(x_{0})),
\end{equation*}
and thus
\begin{align*}
\mu(\{x\in B_{\rho}(x_{0}):u(x,t)>m_{-}+\varepsilon\xi\omega\})
&\geq\mu(B_{\rho}(x_{0}))-\mu(A_{m_{-}+\varepsilon\xi\omega, \rho}(t))\\
&\geq\mu(B_{\rho}(x_{0}))-\frac{3}{4}\mu(B_{\rho}(x_{0}))
=\frac{1}{4}\mu(B_{\rho}(x_{0})).
\end{align*}
Therefore, the claim holds for $\delta=\frac{1}{2^8\gamma Q}$.
\end{proof}

\section{Characterization of continuity}\label{CharacterizationContinuity}
Finally we are ready to prove the main result of this paper. 
\begin{theorem}\label{NecSuf}
Let $u\in L_{\loc}^{1}(0,T;BV_{\loc}(\Omega))$ be a variational solution to the total variation flow in $\Omega_{T}$. 
Then $u$ is continuous at some $(x_{0},t_{0})\in \Omega_{T}$ if and only if
\begin{equation*}
\lim_{\rho\to 0+}\frac{\rho}{(\mu\otimes\mathcal{L}^{1})(Q_{\rho,1}^{-}(x_{0},t_{0}))}\int_{t_{0}-\rho}^{t_{0}}\Vert Du(t)\Vert(B_{\rho}(x_{0}))\,\d t=0.
\end{equation*}
\end{theorem}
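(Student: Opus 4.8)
The plan is to prove the two implications separately. The ``only if'' direction follows quickly from the energy estimate of Proposition~\ref{belonging}, while the ``if'' direction requires the full machinery of Sections~\ref{DeGiorgiLemma} and \ref{TimeExpansion}, assembled into an oscillation-reduction scheme.

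For the \emph{necessity}, assume $u$ is continuous at $(x_{0},t_{0})$, so $\omega(\rho):=\essosc_{B_{2\rho}(x_{0})\times(t_{0}-2\rho,t_{0}]}u\to0$ as $\rho\to0$, and let $m_{-}$ be the essential infimum of $u$ over that cylinder. By Proposition~\ref{belonging} we have $u\in DG^{+}(\Omega_{T};8)$, and I would test \eqref{degiorgiclass} on $B_{2\rho}(x_{0})\times(t_{0}-2\rho,t_{0}]$ with $k=m_{-}$ and $\varphi(x,t)=\zeta(x)\chi(t)$, where $\zeta$ is a spatial cutoff equal to $1$ on $B_{\rho}(x_{0})$, supported in $B_{2\rho}(x_{0})$, with $g_{\zeta}\le C/\rho$, and $\chi$ is a time cutoff vanishing near $t_{0}-2\rho$, equal to $1$ on $[t_{0}-\rho,t_{0}]$, with $|\chi'|\le C/\rho$. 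Since $u\ge m_{-}$ a.e. we have $(u-m_{-})_{+}=u-m_{-}$, hence $g_{(u-m_{-})_{+}}=g_{u}$, and $\varphi\equiv1$ on $B_{\rho}(x_{0})\times[t_{0}-\rho,t_{0}]$, so the second term on the left of \eqref{degiorgiclass} dominates $\int_{t_{0}-\rho}^{t_{0}}\Vert Du(t)\Vert(B_{\rho}(x_{0}))\,\d t$, while the boundary term is nonpositive and may be discarded. Estimating the right-hand side by $0\le(u-m_{-})_{+}\le\omega(\rho)$ together with the bounds on $g_{\varphi}$, $\varphi_{t}$ and the doubling property gives
\[
\frac{\rho}{(\mu\otimes\mathcal{L}^{1})(Q^{-}_{\rho,1}(x_{0},t_{0}))}\int_{t_{0}-\rho}^{t_{0}}\Vert Du(t)\Vert(B_{\rho}(x_{0}))\,\d t\le C\,\omega(\rho)\bigl(1+\omega(\rho)\bigr)\xrightarrow{\rho\to0}0,
\]
which is the asserted condition.

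For the \emph{sufficiency}, write $I(\rho)$ for the quantity inside the $\limsup$, so $I(\rho)\to0$ by hypothesis, and argue by contradiction: suppose $\omega_{\infty}:=\lim_{\rho\to0}\essosc_{Q^{-}_{\rho,1}(x_{0},t_{0})}u>0$ (the limit exists by monotonicity). I would show that at every sufficiently small scale a single step reduces the oscillation by a fixed factor. Fix such a scale $\rho$ and let $m_{\pm}$, $\omega\ge m_{+}-m_{-}$ be the essential sup, inf and oscillation of $u$ over a suitable enlargement of the intrinsically scaled cylinder $Q^{-}_{\rho,\theta}(x_{0},t_{0})$ with $\theta=\tfrac{\delta\omega}{2}$, $\delta$ the constant of Lemma~\ref{TimeExpansionPos}. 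At the bottom time level one of $\{u\le\tfrac12(m_{+}+m_{-})\}$, $\{u\ge\tfrac12(m_{+}+m_{-})\}$ has $\mu$-measure at least $\tfrac12\mu(B_{\rho}(x_{0}))$; assume the second (the first is symmetric, using $u\in DG^{+}$). Lemma~\ref{TimeExpansionPos} then produces $\varepsilon\in(0,1)$ with $\mu(\{u(t)\ge m_{-}+\varepsilon\tfrac{\omega}{2}\}\cap B_{\rho}(x_{0}))\ge\tfrac14\mu(B_{\rho}(x_{0}))$ for all $t$ in the time span of the cylinder, since $\theta\rho\le\delta\tfrac{\omega}{2}\rho$. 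Applying the sublevel-set form of the isoperimetric inequality of Lemma~\ref{DeGiorgiBV} (i.e. Lemma~\ref{DeGiorgiBV} for $-u$) at each such $t$ with the levels $m_{-}+\varepsilon\tfrac{\omega}{4}<m_{-}+\varepsilon\tfrac{\omega}{2}$, and integrating in $t$, one gets
\[
(\mu\otimes\mathcal{L}^{1})\bigl(\{u<m_{-}+\varepsilon\tfrac{\omega}{4}\}\cap Q^{-}_{\rho,\theta}(x_{0},t_{0})\bigr)\le\frac{C}{\varepsilon\,\omega^{2}}\,I(\rho)\,(\mu\otimes\mathcal{L}^{1})\bigl(Q^{-}_{\rho,\theta}(x_{0},t_{0})\bigr).
\]
Now comes the key point, and the only place where the hypothesis $I(\rho)\to0$ is genuinely used: because $\omega\ge\omega_{\infty}>0$ under the contradiction hypothesis and $I(\rho)\to0$, the right-hand side is at most $\nu_{-}(\mu\otimes\mathcal{L}^{1})(Q^{-}_{\rho,\theta}(x_{0},t_{0}))$ for all small $\rho$, where $\nu_{-}$ is the threshold of Lemma~\ref{degiorgi}(i); with the intrinsic choice $\theta\sim\omega$, $\overline{\theta}\sim\theta$ and $\xi\sim\varepsilon$, $a$ fixed, the explicit formula for $\nu_{-}$ shows its $\omega$-factors cancel, so $\nu_{-}$ is bounded below independently of $\omega$. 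Hence Lemma~\ref{degiorgi}(i) applies and gives $u\ge m_{-}+a\varepsilon\tfrac{\omega}{4}$ a.e. on a cylinder $B_{\rho/2}(x_{0})\times(t_{0}-\overline{\theta}\rho,t_{0}]$, so the oscillation there is at most $(1-a\varepsilon/4)\,\omega$. Iterating from a fixed small $\rho_{0}$, with the radii contracting geometrically and the time heights scaling like (current oscillation)$\times$(current radius) — consistent since the oscillation decreases — produces cylinders shrinking to $(x_{0},t_{0})$ on which $\essosc u\le(1-a\varepsilon/4)^{j}\,\omega(\rho_{0})\to0$, contradicting $\omega_{\infty}>0$. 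Thus $\omega_{\infty}=0$, i.e. $\essosc_{Q^{-}_{\rho,1}(x_{0},t_{0})}u\to0$, which as in \cite{dbgk} yields continuity of $u$ at $(x_{0},t_{0})$.

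The main obstacle is exactly this smallness step: a priori the De Giorgi threshold $\nu_{-}$ degenerates as the oscillation shrinks, which makes the iteration look circular; the contradiction framework with the fixed lower bound $\omega_{\infty}$, together with the cancellation of the $\omega$-dependence under intrinsic scaling, is what breaks the circularity. The rest is bookkeeping: matching ball radii across the three ingredients (Lemma~\ref{TimeExpansionPos} and Lemma~\ref{degiorgi} each need sup/inf over a ball larger than the one on which they conclude, so the radii contract by a fixed factor rather than exactly halving), verifying that the resulting nested cylinders genuinely shrink to $(x_{0},t_{0})$, and passing between the intrinsically scaled cylinders used throughout and the cylinders $Q^{-}_{\rho,1}$ of the statement.
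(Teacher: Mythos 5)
Your proposal is correct and follows essentially the same route as the paper: necessity via the energy estimate of Proposition~\ref{belonging} with a space--time cutoff (your choice $k=m_{-}$ in place of the paper's $k=0$ applied to $u_{\pm}$ is an immaterial variant), and sufficiency via the contradiction/oscillation-reduction scheme combining Lemma~\ref{TimeExpansionPos}, the slice-wise isoperimetric Lemma~\ref{DeGiorgiBV} integrated in time, and the critical-mass Lemma~\ref{degiorgi}, with the key observation that under intrinsic scaling $\theta\sim\omega$ the threshold $\nu_{-}$ does not degenerate, which is exactly how the paper closes the iteration.
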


\begin{proof}
We begin with the necessary part of Theorem \ref{NecSuf}.
By Proposition \ref{belonging}, we have $u\in DG(\Omega_{T};\gamma)$ with $\gamma=8$.
Assume that  $u$ is continuous at $(x_{0},t_{0})\in\Omega_{T}$. 
Without loss of generality we may assume $u(x_{0},t_{0})=0$. 
Let $\zeta$ be a Lipschitz cutoff function with $0\le\zeta\le1$,
$\zeta=0$ on $(X\times\mathbb R)\setminus Q_{2\rho,1}^{-}(x_{0},t_{0})$,
$\zeta=1$ on $Q_{\frac{3}{2}\rho,1}^{-}(x_{0},t_{0})$,
$\zeta(\cdot, t_{0}-2\rho)=0$,
$\zeta_t\ge0$
and $g_{\zeta}+\zeta_{t}\leq\frac{3}{\rho}$.
We apply \eqref{degiorgiclass} with $\theta=1$, $k=0$ and neglect the supremum term of the left-hand side to obtain
\[
\int_{t_{0}-2\rho}^{t_{0}}\Vert D(u_{+}\zeta)(t)\Vert(B_{2\rho}(x_{0}))\,\d t
\leq\gamma\iint_{Q_{2\rho,1}^{-}(x_{0},t_{0})}(u_{+}(t)g_{\zeta}(t)+u_{+}(t)^{2}\vert\zeta_{t}(t)\vert)\,\d \mu\,\d t,
\]
and
\[
\int_{t_{0}-2\rho}^{t_{0}}\Vert D(u_{-}\zeta)(t)\Vert(B_{2\rho}(x_{0}))\,\d t
\leq\gamma\iint_{Q_{2\rho,1}^{-}(x_{0},t_{0})}(u_{-}(t)g_{\zeta}(t)+u_{-}(t)^{2}\vert\zeta_{t}(t)\vert)\,\d \mu\,\d t.
\]
By adding up the inequalities above and using the doubling property of the measure, we obtain
\[
\begin{split}
\int_{t_{0}-2\rho}^{t_{0}}\Vert D(u\zeta)(t)\Vert(B_{2\rho}(x_{0}))\,\d t
&\leq\gamma\iint_{Q_{2\rho,1}^{-}(x_{0},t_{0})}(g_{\zeta}(t)\vert u(t)\vert+\vert\zeta_{t}(t)\vert u(t)^{2})\,\d \mu\,\d t\\
&\leq\frac{3\gamma}{\rho}\iint_{Q_{2\rho,1}^{-}(x_{0},t_{0})}(\vert u(t)\vert+u(t)^{2})\,\d \mu\,\d t\\
&=\frac{3\gamma}{\rho}(\mu\otimes\mathcal{L}^{1})(Q_{2\rho,1}^{-}(x_{0},t_{0})) \fiint_{Q_{2\rho,1}^{-}(x_{0},t_{0})}(\vert u(t)\vert+u(t)^{2})\,\d \mu\,\d t\\
&\leq\frac{6C_{\mu}\gamma}{\rho}(\mu\otimes\mathcal{L}^{1})(Q_{\rho,1}^{-}(x_{0},t_{0})) \fiint_{Q_{2\rho,1}^{-}(x_{0},t_{0})}(\vert u(t)\vert+u(t)^{2})\,\d \mu\,\d t.
\end{split}
\]
Since $u\zeta=u$ in $Q_{\frac{3}{2}\rho,1}^{-}(x_{0},t_{0})\supseteq Q_{\rho,1}^{-}(x_{0},t_{0})$, we obtain
\[
\begin{split}
&\frac{\rho}{(\mu\otimes\mathcal{L}^{1})(Q_{\rho,1}^{-}(x_{0},t_{0}))}\int_{t_{0}-2\rho}^{t_{0}}\Vert Du(t)\Vert(B_{\rho}(x_{0}))\,\d t\\
&\qquad\le\frac{\rho}{(\mu\otimes\mathcal{L}^{1})(Q_{\rho,1}^{-}(x_{0},t_{0}))}\int_{t_{0}-2\rho}^{t_{0}}\Vert D(u\zeta)(t)\Vert(B_{2\rho}(x_{0}))\,\d t\\
&\qquad\leq 6C_{\mu}\gamma\fiint_{Q_{2\rho,1}^{-}(x_{0},t_{0})}(\vert u(t)\vert+u(t)^{2})\,\d \mu\,\d t.
\end{split}
\]
The right-hand side tends to zero as $\rho\rightarrow 0$ implying the necessary condition of Theorem \ref{NecSuf}.

Let us then prove the sufficient part of Theorem \ref{NecSuf}.
Let $(x_{0},t_{0})\in\Omega_{T}$ and let $\rho>0$ be so small that $Q_{\rho, 1}^{-}(x_{0},t_{0})=B_{\rho}(x_{0})\times (t_{0}-\rho, t_{0}]\subset \Omega_{T}$. 
Set
\[
m_{+}=\esssup_{Q_{\rho,1}^{-}(x_{0},t_{0})} u,
\quad m_{-}=\essinf_{Q_{\rho,1}^{-}(x_{0},t_{0})} u
\quad\text{and}\quad 
\omega= m_{+}-m_{-}=\essosc_{Q_{\rho,1}^{-}(x_{0},t_{0})}u.
\]
Without loss of generality we may assume that $\omega\leq 1$ so that
\begin{equation*}
Q_{\rho,\omega}^{-}(x_{0},t_{0})=B_{\rho}(x_{0})\times(t_{0}-\omega\rho,t_{0}]\subset Q_{\rho,1}^{-}(x_{0},t_{0})\subset\Omega_{T}.
\end{equation*}
Therefore, 
\[
\essinf_{Q_{\rho,\omega}^{-}(x_{0},t_{0})} u\geq m_{-},
\quad\esssup_{Q_{\rho,\omega}^{-}(x_{0},t_{0})} u\leq m_{+}
\quad\text{and}\quad 
\omega\geq\essosc_{Q_{\rho,\omega}^{-}(x_{0},t_{0})} u.
\]
For a contradiction, assume that $u$ is not continuous at $(x_{0},t_{0})$.
Then there exists $\rho_{0}>0$ and $\omega_{0}>0$ such that
\begin{equation*}
\omega_{\tilde{\rho}}=\essosc_{Q_{\tilde{\rho},1}^{-}(x_{0},t_{0})} u\geq \omega_{0}>0,
\end{equation*}
for all $0<\tilde{\rho}\leq\rho_{0}$.
Let $\tilde{\delta}=\frac{1}{2^{8}\gamma Q}$, determined as in the proof of Lemma \ref{TimeExpansionPos} at the time level $\tilde{t}=t_{0}-\frac{\tilde{\delta}\omega\rho}{2}$.
Clearly
\begin{equation*}
\mu\left(\left\{x\in B_{\rho}(x_{0}):u\big(x,t_{0}-\tfrac{\tilde{\delta}\omega\rho}{2}\big)\geq m_{-}+\frac{\omega}{2}\right\}\right)
\geq\frac{1}{2}\mu(B_{\rho}(x_{0})),
\end{equation*}
or
\begin{equation*}
\mu\left(\left\{x\in B_{\rho}(x_{0}):u\big(x,t_{0}-\tfrac{\tilde{\delta}\omega\rho}{2}\big)\leq m_{+}+\frac{\omega}{2}\right\}\right)
\geq\frac{1}{2}\mu(B_{\rho}(x_{0})).
\end{equation*}
Assuming the former holds, by Lemma \ref{TimeExpansionPos} there is a $\delta$, actually $\delta=\tilde{\delta}$ works, and $\varepsilon=\frac{1}{32}$ such that
\begin{equation*}
\mu\left(\left\{x\in B_{\rho}(x_{0}):u(x,t)\geq m_{-}+\frac{\omega}{64}\right\}\right)
\geq\frac{1}{4}\mu(B_{\rho}(x_{0})),
\end{equation*}
for every $t\in (t_{0}-\frac{\tilde{\delta}\omega\rho}{2}, t_{0}]$.
Let $2\tilde{\xi}=\frac{1}{64}\tilde{\delta}$. Since $\frac{1}{64}\omega\geq\frac{1}{64}\tilde{\delta}\omega$, then 
\[
\left\{x\in B_{\rho}(x_{0}):u(x,t)\geq m_{-}+\frac{\omega}{64}\right\}
\subset\left\{x\in B_{\rho}(x_{0}):u(x,t)\geq m_{-}+\frac{\tilde{\delta}\omega}{64}\right\},
\] 
and thus
\[
\mu(\{x\in B_{\rho}(x_{0}):u(x,t)>m_{-}+2\tilde{\xi}\omega\})
\geq\mu\left(\left\{x\in B_{\rho}(x_{0}):u(x,t)\geq m_{-}+\frac{\omega}{64}\right\}\right)
\geq\frac{1}{4}\mu (B_{\rho}(x_{0})),
\]
for every $t\in (t_{0}-\frac{\tilde{\delta}\omega\rho}{2}, t_{0}]$.
Since $(t_{0}-\tilde{\xi}\omega\rho, t_{0}]\subset (t_{0}-\frac{\tilde{\delta}\omega\rho}{2}, t_{0}]$, we have
\begin{equation}\label{fourthBall}
\mu(\{x\in B_{\rho}(x_{0}):u(x,t)>m_{-}+2\tilde{\xi}\omega\})
\geq\frac{1}{4}\mu (B_{\rho}(x_{0})),
\end{equation}
for every $ t\in (t_{0}-\tilde{\xi}\omega\rho, t_{0}]$.
Next, we apply Lemma \ref{DeGiorgiBV} to the function $u(\cdot, t)$, for $t$ in the range $(t_{0}-\tilde{\xi}\omega\rho, t_{0}]$ over the ball $B_{\rho}(x_{0})$ 
with $k=m_{-}+\tilde{\xi}\omega$ and $l=m_{-}+2\tilde{\xi}\omega$, so that $l-k=\tilde{\xi}\omega$.

By the doubling property of the measure and \eqref{fourthBall}, we have
\begin{align*}
\frac{\tilde{\xi}\omega}{2^{Q+2}C_{\mu}}
&\leq\frac{\tilde{\xi}\omega}{4}\frac{\mu(B_{\rho}(x_{0}))}{\mu(B_{2\rho}(x_{0}))}
\leq\frac{\tilde{\xi}\omega
\mu(\{x\in B_{\rho}(x_{0}):u(x,t)>m_{-}+2\tilde{\xi}\omega\})}
{\mu(B_{2\rho}(x_{0}))}\\
&\leq C\rho\frac{\Vert Du(t)\Vert(\{x\in B_{\rho}(x_{0}):u(x,t)>m_{-}+\tilde{\xi}\omega\})}{\mu(\{x\in B_{\rho}(x_{0}):u(x,t)<m_{-}+\tilde{\xi}\omega\})}.
\end{align*}
This implies
\begin{equation*}
\tilde{\xi}\omega\mu(\{x\in B_{\rho}(x_{0}):u(x,t)<m_{-}+\tilde{\xi}\omega\})
\leq C\rho\Vert Du(t)\Vert(\{x\in B_{\rho}(x_{0}):u(x,t)>m_{-}+\tilde{\xi}\omega\}),
\end{equation*}
where $C=C(C_\mu, C_P)$.

Integrating over the time interval $(t_{0}-\tilde{\xi}\omega\rho, t_{0}]$ gives
\begin{align*}
\tilde{\xi}\omega
\int_{t_{0}-\tilde{\xi}\omega\rho}^{t_{0}}&\mu(\{x\in B_{\rho}(x_{0}):u(x,t)<m_{-}+\tilde{\xi}\omega\})\,\d t\\
&\leq C\rho \int_{t_{0}-\tilde{\xi}\omega\rho}^{t_{0}}\Vert Du(t)\Vert(\{x\in B_{\rho}(x_{0}):u(x,t)>m_{-}+\tilde{\xi}\omega\})\,\d t\\
&\leq C\rho \int_{t_{0}-\tilde{\xi}\omega\rho}^{t_{0}}\Vert Du(t)\Vert(B_{\rho}(x_{0}))\,\d t\\
&=C\rho\frac{(\mu\otimes\mathcal{L}^{1})(Q^{-}_{\rho,\tilde{\xi}\omega}(x_{0},t_{0}))}{(\mu\otimes\mathcal{L}^{1})(Q^{-}_{\rho,\tilde{\xi}\omega}(x_{0},t_{0}))}
\int_{t_{0}-\tilde{\xi}\omega\rho}^{t_{0}}\Vert Du(t)\Vert(B_{\rho}(x_{0}))\,\d t\\
&=\frac{C\rho}{\tilde{\xi}\omega}
\frac{(\mu\otimes\mathcal{L}^{1})(Q^{-}_{\rho,\tilde{\xi}\omega}(x_{0},t_{0}))}{(\mu\otimes\mathcal{L}^{1})(Q^{-}_{\rho,1}(x_{0},t_{0}))}
\int_{t_{0}-\tilde{\xi}\omega\rho}^{t_{0}}\Vert Du(t)\Vert(B_{\rho}(x_{0}))\,\d t.
\end{align*}
Since
\begin{equation*}
\int_{t_{0}-\tilde{\xi}\omega\rho}^{t_{0}}\mu(\{x\in B_{\rho}(x_{0}):u(x,t)<m_{-}+\tilde{\xi}\omega\})\,\d t
\geq (\mu\otimes\mathcal{L}^{1})(Q^{-}_{\rho,\tilde{\xi}\omega}(x_{0},t_{0})\cap\{u<m_{-}+\tilde{\xi}\omega\}),
\end{equation*}
we have
\begin{equation*}
\frac{(\mu\otimes\mathcal{L}^{1})(Q^{-}_{\rho,\tilde{\xi}\omega}(x_{0},t_{0})\cap\{u<m_{-}+\tilde{\xi}\omega\})}{(\mu\otimes\mathcal{L}^{1})(Q^{-}_{\rho,\tilde{\xi}\omega}(x_{0},t_{0}))}
\leq\frac{C}{(\tilde{\xi}\omega_{0})^{2}}\frac{\rho}{(\mu\otimes\mathcal{L}^{1})(Q^{-}_{\rho,1}(x_{0},t_{0}))}\int_{t_{0}-\tilde{\xi}\omega\rho}^{t_{0}}\Vert Du(\cdot, t)\Vert(B_{\rho}(x_{0}))\,\d t.
\end{equation*}
By assumption, the right-hand side tends to zero as $\rho\to 0+$. Hence, there exits $\rho>0$ small enough such that
\begin{equation*}
\frac{(\mu\otimes\mathcal{L}^{1})(Q^{-}_{\rho,\tilde{\xi}\omega}(x_{0},t_{0})\cap\{u<m_{-}+\tilde{\xi}\omega\})}{(\mu\otimes\mathcal{L}^{1})(Q^{-}_{\rho,\tilde{\xi}\omega}(x_{0},t_{0}))}\leq \nu_{-},
\end{equation*}
where $\nu_{-}$ is the number in Lemma \ref{degiorgi} for such a choice of parameters.
Lemma \ref{degiorgi} implies $u\geq m_{-}+\frac{1}{2}\tilde{\xi}\omega$ $(\mu\otimes\mathcal{L}^{1})$-almost everywhere in $Q^{-}_{\frac{1}{2}\rho,\tilde{\xi}\omega}(x_{0},t_{0})$ and consequently
\begin{equation*}
\essinf_{Q^{-}_{\frac{1}{2}\rho,\tilde{\xi}\omega}(x_{0},t_{0})} u\geq m_{-}+\frac{\tilde{\xi}\omega}{2}.
\end{equation*}
This implies
\begin{align*}
\essosc_{Q^{-}_{\frac{1}{2}\rho,\tilde{\xi}\omega}(x_{0},t_{0})} u
&=\esssup_{Q^{-}_{\frac{1}{2}\rho,\tilde{\xi}\omega}(x_{0},t_{0})} u -\essinf_{Q^{-}_{\frac{1}{2}\rho,\tilde{\xi}\omega}(x_{0},t_{0})}u
\leq\esssup_{Q^{-}_{\rho,1}(x_{0},t_{0})}u-m_{-}-\frac{\tilde{\xi}\omega}{2}\\
&=m_{+}-m_{-}-\frac{\tilde{\xi}\omega}{2}
=\omega-\frac{\tilde{\xi}\omega}{2}
=\left(1-\frac{\tilde{\xi}}{2}\right)\omega
=\eta\omega,
\end{align*}
for some $\eta\in (0,1)$.
With $\rho_{1}=\frac{1}{2}\tilde{\xi}\omega\rho$ we have $Q^{-}_{\rho_{1},1}(x_{0},t_{0})\subset Q^{-}_{\frac{1}{2}\rho,\tilde{\xi}\omega}(x_{0},t_{0})$ and thus
\begin{equation*}
\esssup_{Q^{-}_{\rho_{1},1}(x_{0},t_{0})} u\leq\esssup_{Q^{-}_{\frac{1}{2}\rho,\tilde{\xi}\omega}(x_{0},t_{0})}u
\quad\text{and}\quad 
\essinf_{Q^{-}_{\frac{1}{2}\rho,\tilde{\xi}\omega}(x_{0},t_{0})}u\leq\esssup_{Q^{-}_{\rho_{1},1}(x_{0},t_{0})}u.
\end{equation*}
Therefore, we have
\begin{align*}
\omega_{\rho_{1}}&=\essosc_{Q^{-}_{\rho_{1},1}(x_{0},t_{0})}u=\esssup_{Q^{-}_{\rho_{1},1}(x_{0},t_{0})}u-\essinf_{Q^{-}_{\rho_{1},1}(x_{0},t_{0})}u\\
&\leq \esssup_{Q^{-}_{\frac{1}{2}\rho,\tilde{\xi}\omega}(x_{0},t_{0})}u-\essinf_{Q^{-}_{\frac{1}{2}\rho,\tilde{\xi}\omega}(x_{0},t_{0})}u
=\essosc_{Q^{-}_{\frac{1}{2}\rho,\tilde{\xi}\omega}(x_{0},t_{0})}\leq\eta\omega.
\end{align*}
By repeating the same argument starting from the cylinder $Q^{-}_{\rho_{1},1}(x_{0},t_{0})$ and proceeding recursively, we generate a decreasing sequence of radii $\rho_{n}\rightarrow 0$ such that
\begin{equation*}
\omega_{0}\leq \essosc_{Q^{-}_{\rho_{n},1}(x_{0},t_{0})}u\leq\eta^{n}\omega,
\end{equation*}
for every $n\in\mathbb{N}$.
This is a contradiction with the assumption $u$ is not continuous at $(x_{0},t_{0})$.
\end{proof}

\renewcommand{\refname}{References}


\begin{thebibliography}{99}

\bibitem{AmbrosioDiMarino}\textsc{L. Ambrosio, S. Di Marino}, \textit{Equivalent definitions of BV space and of total variation on metric measure spaces},  J. Funct. Anal., \textbf{266} (2014), 4150--4188.

\bibitem{Anzellotti:1984}
\textsc{G. Anzellotti}, 
\textit{Pairings between measures and bounded functions and compensated compactness}, Ann. Mat. Pura Appl., \textbf{135} (1984), 293--318.

\bibitem{AndreuBallesterCasellesMazon}\textsc{F. Andreu, C. Ballester, V. Caselles, J.M. Maz\'on}, 
\textit{Minimizing total variation flow}, Differ. Integral Equ., \textbf{14} (2001), 321--360.

\bibitem{AndreuCasellesDiazMazon}\textsc{F. Andreu, V. Caselles, J.I. Diaz, J.M. Maz\'on}, 
\textit{Some qualitative properties for the total variation flow},  
J. Funct. Anal., \textbf{188} (2002), 516--547.

\bibitem{AndreuCasellesMazon}\textsc{F. Andreu-Vaillo, V. Caselles, J.M. Maz\'on}, 
\textit{Parabolic quasilinear equations minimizing linear growth functional},  
Prog. Math., 223, Birkh\"auser Verlag, Basel, 2004.

\bibitem{BellettiniCasellesNovaga}\textsc{G. Bellettini, V. Caselles, M. Novaga}, 
\textit{The total variation flow in $\mathbb{R}^N$},  
J. Differ. Equ., \textbf{184} (2002), 475--525.

\bibitem{bjorn} \textsc{A. Bj\"orn, J. Bj\"orn}, 
\textit{Nonlinear potential theory on metric spaces},  
EMS Tracts in Mathematics, European Mathematical Society, Z\"urich, 2011.

\bibitem{Buffa}
\textsc{V. Buffa}, 
\textit{Time-smoothing for parabolic variational problems in metric measure spaces}, 
Ann. Univ. Ferrara Sez. VII Sci. Mat. (to appear), \url{https://arxiv.org/abs/2002.00093}.

\bibitem{BuffaCollinsPacchiano}
\textsc{V. Buffa, M. Collins, C. Pacchiano}, 
\textit{Existence of parabolic minimizers to the total variation flow on metric measure spaces}, Manuscr. Math. (2022), \url{https://doi.org/10.1007/s00229-021-01350-2}.

\bibitem{BogeleinDuzaarMarcellini}
\textsc{V. B\"ogelein, F. Duzaar, P. Marcellini}, 
\textit{A time dependent variational approach to image restoration}, SIAM J. Imaging Sci., \textbf{8} (2015), 968--1006.

\bibitem{BogeleinDuzaarScheven}
\textsc{V. B\"ogelein, F. Duzaar, C. Scheven}, 
\textit{The total variation flow with time dependent boundary values}, Calc. Var. Partial Differential Equations, \textbf{55} (2016), no. 4, Art. 108. 

 \bibitem{BoegelDuzSchev:2015}
  \textsc{V. B\"ogelein, F. Duzaar, C. Scheven},
  \textit{The obstacle problem for the total variation flow}, Ann. Sci. \'Ec. Norm. Sup\'er. (4) \textbf{49} (2016), 1143--1188.

\bibitem{collins}
\textsc{M. Collins, A. Her\'an}, 
\textit{Existence of parabolic minimizers on metric measure spaces}, 
Nonlinear Anal., \textbf{176} (2018), 56--83.

\bibitem{cozzi}
\textsc{M. Cozzi},
\textit{Regularity results and Harnack inequalities for minimizers and solutions of nonlocal problems: a unified approach via fractional De Giorgi classes},
J. Funct. Anal., \textbf{272} (2017), 4762--4837. 

\bibitem{dbgk} \textsc{E. DiBenedetto, U. Gianazza, C. Klaus}, 
\textit{A necessary and sufficient condition for the continuity of local minima of parabolic variational integrals with linear growth},  
Adv. Calc. Var., \textbf{10} (2017), 209--221.

\bibitem{dbgv} \textsc{E. DiBenedetto, U. Gianazza, V. Vespri}, 
\textit{Harnack's inequality for degenerate and singular parabolic equations},  
Springer Monographs in Mathematics, Springer-Verlag, New York, 2012.

\bibitem{EvansGariepy}
\textsc{L.C. Evans, R.F. Gariepy},
\textit{Measure theory and fine properties of functions}, 
Studies in Advanced Mathematics, CRC Press, Boca Raton, 1992. 

\bibitem{FujishimaHabermann}
\textsc{Y. Fujishima, J. Habermann},
\textit{The stability problem for parabolic quasiminimizers in metric measure spaces},
 Atti Accad. Naz. Lincei Rend. Lincei Mat. Appl., \textbf{29} (2018), 343--376.

\bibitem{Fujishima_et_al}
\textsc{Y. Fujishima, J. Habermann, M. Masson},  
\textit{A fairly strong stability result for parabolic quasiminimizers}, 
Math. Nachr., \textbf{291} (2018), 1269--1282.

\bibitem{FujishimaHabermannKinnunenMasson}
\textsc{Y. Fujishima, J. Habermann, J. Kinnunen, M. Masson},
\textit{Stability for parabolic quasiminimizers}, 
Potential Anal., \textbf{41} (2014), 983--1004.

\bibitem{Gorny-Mazon:2021}
\textsc{W. G\'orny, J.M. Maz\'on},
\textit{The Neumann and Dirichlet problems for the total variation flow in metric measure spaces},
Preprint 2021.  \url{https://arxiv.org/abs/2004.09243}.

\bibitem{GianazzaKlaus}
\textsc{U. Gianazza, C. Klaus},
\textit{$p$-parabolic approximation of total variation flow solutions},
 Indiana Univ. Math. J., \textbf{68} (2019), 1519--1550. 

\bibitem{hei} \textsc{J. Heinonen}, 
\textit{Lectures on analysis on metric spaces},  Universitext,
Springer-Verlag, New York, 2001.

\bibitem{Heinonen1998}
\textsc{J. Heinonen and P. Koskela},
\textit{Quasiconformal maps in metric spaces with controlled geometry},
Acta Math., \textbf{181} (1998), 1--61.


\bibitem{hkst} \textsc{J. Heinonen, P. Koskela, N. Shanmugalingam, J. Tyson}, 
\textit{Sobolev spaces on metric measure spaces. An approach based on upper gradients},  
New Mathematical Monographs, 27, Cambridge University Press, Cambridge, 2015.

\bibitem{Heran}
\textsc{A. Her\'an}, 
\textit{Harnack inequality for parabolic quasi minimizers on metric spaces},
 Atti Accad. Naz. Lincei Rend. Lincei Mat. Appl., \textbf{31} (2020), 565--592.

\bibitem{Ivert_et_al}
\textsc{P.-A. Ivert, N. Marola, M. Masson},
\textit{Energy estimates for variational minimizers of a parabolic doubly nonlinear equation on metric measure spaces},
Ann. Acad. Sci. Fenn. Math., \textbf{39} (2014), 711--719. 

\bibitem{KinnunenMasson}
\textsc{J. Kinnunen, M. Masson},  
\textit{Parabolic comparison principle and quasiminimizers in metric measure spaces},
Proc. Amer. Math. Soc., \textbf{143} (2015), 621--632.

\bibitem{KinnunenShanmugalingam}
\textsc{J. Kinnunen, N. Shanmugalingam},
\textit{Regularity of quasi-minimizers on metric spaces},
Manuscripta Math., \textbf{105} (2001), 401--424.

\bibitem{KinnunenEtAl2014}
\textsc{J. Kinnunen, R. Korte, N. Shanmugalingam, H. Tuominen}, 
\textit{Pointwise properties of functions of bounded variation in metric spaces}, 
Rev. Mat. Complut., \textbf{27} (2014), 41--67.

\bibitem{kmmp} \textsc{J. Kinnunen, N. Marola, M. Miranda Jr., F. Paronetto}, 
\textit{Harnack's inequality for parabolic De Giorgi classes in metric spaces},  
Adv. Differ. Equ., \textbf{17} (2012), 801--832.

\bibitem{Koskela1998}
\textsc{P. Koskela, P. MacManus},
\textit{Quasiconformal mappings and Sobolev spaces},
Studia Math., \textbf{131} (1998), 1--17.

\bibitem{Kuttler} \textsc{K. Kuttler}, \textit{Modern analysis}, 
Studies in Advanced Mathematics, 26, CRC Press, 1998.

\bibitem{Lahti2017}
\textsc{P. Lahti}, \textit{Quasiopen sets, bounded variation and lower semicontinuity in metric spaces}, Potential Anal., \textbf{52} (2020), 321--337.

\bibitem{MarolaMasson}
\textsc{N. Marola, M. Masson},
\textit{On the Harnack inequality for parabolic minimizers in metric measure spaces},
Tohoku Math. J., \textbf{65} (2013), 569--589.

\bibitem{Masson_et_al_2013}
\textsc{M. Masson, M. Miranda Jr., F. Paronetto, M. Parviainen}, 
\textit{Local higher integrability for parabolic quasiminimizers in metric spaces},
Ric. Mat., \textbf{62} (2013), 279--305. 

\bibitem{MassonParviainen}
\textsc{M. Masson, M. Parviainen}, 
\textit{Global higher integrability for parabolic quasiminimizers in metric measure spaces}, 
J. Anal. Math., \textbf{126} (2015), 307--339.

\bibitem{MassonSiljander}
\textsc{M. Masson, J. Siljander},
\textit{H\"older regularity for parabolic De Giorgi classes in metric measure spaces}, 
Manuscr. Math., \textbf{142} (2013), 187--214.

\bibitem{mir}
\textsc{M. Miranda Jr},
\textit{Functions of bounded variation on ``good'' metric spaces},
J. Math. Pures Appl., \textbf{82} (2003), 975--1004.

\bibitem{ruz}
\textsc{M. R\r{u}\v{z}i\v{c}ka}, 
\textit{Nichtlineare Funktionalanalysis: Eine Einf\"{u}hrung}, Springer Lehrbuch Masterclass, Springer Berlin Heidelberg, 2004.


\bibitem{Shanmugalingam}
\textsc{N. Shanmugalingam},
\textit{Newtonian spaces: an extension of Sobolev spaces to metric measure spaces}, Rev. Mat. Iberoamericana, \textbf{16} (2000), 243--279.


\end{thebibliography}
\end{document}